\numberwithin{equation}{section}
\newtheorem{thm}{Theorem}[section]
\newtheorem{cor}[thm]{Corollary}
\newtheorem{lemma}[thm]{Lemma}
\newcommand{\ssp}{\hspace{1pt}}
\newcommand{\del}{\backslash}
\newcommand{\cl}{\mathrm{cl}}
\newcommand{\meet}{\land}
\newcommand{\join}{\lor}
\newcommand{\cov}{\mbox{$\,\prec\!\!\cdot\,$}}
\newcommand{\mcJ}{\mathcal{J}}
\newcommand{\mcA}{\mathcal{A}}
\newcommand{\mcB}{\mathcal{B}}
\newcommand{\mcC}{\mathcal{C}}
\title{Lattices Related to Extensions of Presentations of Transversal
  Matroids}
\author[J.~Bonin]{Joseph E.~Bonin}
\address{Department of Mathematics\\ The George Washington University\\
  Washington, D.C. 20052, USA} \email{jbonin@gwu.edu}
\subjclass{Primary: 05B35} 
\keywords{Transversal matroid, presentation, single-element extension,
  distributive lattice.}
\date{\today}
\begin{document}

\begin{abstract}
  For a presentation $\mcA$ of a transversal matroid $M$, we study the
  set $T_\mcA$ of single-element transversal extensions of $M$ that
  have presentations that extend $\mcA$; we order these extensions by
  the weak order.  We show that $T_\mcA$ is a distributive lattice,
  and that each finite distributive lattice is isomorphic to $T_\mcA$
  for some presentation $\mcA$ of some transversal matroid $M$.  We
  show that $T_\mcA\cap T_\mcB$, for any two presentations $\mcA$ and
  $\mcB$ of $M$, is a sublattice of both $T_\mcA$ and $T_\mcB$.  We
  prove sharp upper bounds on $|T_\mcA|$ for presentations $\mcA$ of
  rank less than $r(M)$ in the order on presentations; we also give a
  sharp upper bound on $|T_\mcA\cap T_\mcB|$.  The main tool we
  introduce to study $T_\mcA$ is the lattice $L_\mcA$ of closed sets
  of a certain closure operator on the lattice of subsets of
  $\{1,2,\ldots,r(M)\}$.
\end{abstract}

\maketitle

\markboth{Several Lattices Related to Extensions and Presentations of
  Transversal Matroids}{Several Lattices Related to Extensions and
  Presentations of Transversal Matroids}

\section{Introduction}\label{sec:intro}

We continue the investigation, begun in \cite{extpres}, of the extent
to which a presentation $\mcA$ of a transversal matroid $M$ limits the
single-element transversal extensions of $M$ that can be obtained by
extending $\mcA$.  The following analogy may help orient readers.  A
matrix $A$, over a field $\mathbb{F}$, that represents a matroid $M$
may contain extraneous information; this can limit which
$\mathbb{F}$-representable single-element extensions of $M$ can be
represented by extending (i.e., adjoining another column to) $A$.  For
instance, for the rank-$3$ uniform matroid $U_{3,6}$, partition
$E(U_{3,6})$ into three $2$-point lines, $L_1$, $L_2$, and $L_3$.  Let
$A$ be a $3\times 6$ matrix, over $\mathbb{F}$, that represents
$U_{3,6}$.  The line $L_i$ is represented by a pair of columns of $A$,
which span a $2$-dimensional subspace $V_i$ of $\mathbb{F}^3$.  While
$V_i\cap V_j$, for $\{i,j\}\subset \{1,2,3\}$, has dimension $1$
(since the corresponding lines of $U_{3,6}$ are coplanar), the
intersection $V_1\cap V_2\cap V_3$ can, in general, have dimension
either $0$ or $1$: this dimension is extraneous. If $\dim(V_1\cap
V_2\cap V_3)$ is $1$, then no extension of $A$ represents the
extension of $M$ that has an element on, say, $L_1$ and $L_2$ but not
$L_3$; otherwise, no extension of $A$ represents the extension of $M$
that has a non-loop on all three lines.  (The underlying problem is
the lack of unique representability, which is a major complicating
factor for research on representable matroids.  See Oxley
\cite[Section 14.6]{oxley}.)  In this paper, we consider such
problems, but for transversal matroids in place of
$\mathbb{F}$-representable matroids, and presentations in place of
matrix representations.

A transversal matroid can be given by a presentation, which is a
sequence of sets whose partial transversals are the independent sets.
In \cite{extpres}, we introduced the ordered set $T_\mcA$ of
transversal single-element extensions of $M$ that have presentations
that extend $\mcA$ (i.e., the new element is adjoined to some of the
sets in $\mcA$), where we order extensions by the weak order.  In
Section \ref{sec:lattice}, we introduce a new tool for studying
$T_\mcA$: given a presentation $\mcA$ of a transversal matroid $M$
with the number, $|\mcA|$, of terms in the sequence $\mcA$ being the
rank, $r$, of $M$, we define a closure operator on the lattice
$2^{[r]}$ of subsets of the set $[r]=\{1,2,\ldots,r\}$, and we show
that the resulting lattice $L_\mcA$ of closed sets is a (necessarily
distributive) sublattice of $2^{[r]}$ that is isomorphic to $T_\mcA$.
While they are isomorphic, $L_\mcA$ is often simpler to work with than
is $T_\mcA$.  We prove some basic properties of the lattice $L_\mcA$,
give several descriptions of its elements, show that every
distributive lattice is isomorphic to $L_\mcA$, and so to $T_\mcA$,
for a suitable choice of $M$ and $\mcA$, and we interpret the join-
and meet-irreducible elements of $L_\mcA$.  We show that if $\mcA$ and
$\mcB$ are both presentations of $M$, then $T_\mcA\cap T_\mcB$ is a
sublattice of $T_\mcA$ and of $T_\mcB$.  In \cite{extpres}, we showed
that $|T_\mcA| = 2^r$ if and only if the presentation $\mcA$ of $M$ is
minimal in the natural order on the presentations of $M$; using
$L_\mcA$, in Section \ref{sec:applic} we prove upper bounds on
$|T_\mcA|$ for the next $r$ lowest ranks in this order.  We also show
that $|T_\mcA\cap T_\mcB|\leq \frac{3}{4} \cdot 2^r$ whenever
presentations $\mcA$ and $\mcB$ of $M$ differ by more than just the
order of the sets.

The relevant background is recalled in the next section.  See Brualdi
\cite{brualdi} for more about transversal matroids, and Oxley
\cite{oxley} for other matroid background.

\section{Background}\label{sec:background}

A \emph{set system} $\mcA=(A_i\,:\,i\in[r])$ on a set $E$ is a
sequence of subsets of $E$.  A \emph{partial transversal} of $\mcA$ is
a subset $X$ of $E$ for which there is an injection $\phi:X\rightarrow
[r]$ with $e\in A_{\phi(e)}$ for all $e\in X$; such an injection is an
\emph{$\mcA$-matching of $X$ into $[r]$}.  Edmonds and
Fulkerson~\cite{ef} showed that the partial transversals of $\mcA$ are
the independent sets of a matroid on $E$; we say that $\mcA$ is a
\emph{presentation} of this \emph{transversal matroid} $M[\mcA]$.

The first lemma is an easy observation.

\begin{lemma}\label{lem:restr}
  Let $M$ be $M[\mcA]$ with $\mcA =(A_i\,:\,i\in [r])$.  For any
  subset $X$ of $E(M)$, the restriction $M|X$ is transversal and
  $(A_i\cap X\,:\,i\in [r])$ is a presentation of $M|X$.
\end{lemma}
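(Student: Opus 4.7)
The plan is to show directly, from the definitions, that for any subset $Y$ of $X$, $Y$ is a partial transversal of $\mcA$ if and only if $Y$ is a partial transversal of the set system $\mcA'=(A_i\cap X\,:\,i\in[r])$. Since the independent sets of $M|X$ are exactly those independent sets of $M$ that are contained in $X$, this equivalence simultaneously shows that $M|X$ is a transversal matroid and that $\mcA'$ is a presentation for it, which is the whole content of the lemma.

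First I would handle the forward direction. Suppose $Y\subseteq X$ is a partial transversal of $\mcA$, witnessed by an injection $\phi\colon Y\to [r]$ with $e\in A_{\phi(e)}$ for all $e\in Y$. Because $Y\subseteq X$, each $e\in Y$ also lies in $X$, so $e\in A_{\phi(e)}\cap X$; hence $\phi$ is an $\mcA'$-matching of $Y$ into $[r]$, and $Y$ is a partial transversal of $\mcA'$.

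For the converse, observe that $A_i\cap X\subseteq A_i$ for every $i$, so any $\mcA'$-matching $\phi\colon Y\to[r]$ is automatically an $\mcA$-matching; moreover, such a $Y$ is contained in $X$ since each $e\in Y$ satisfies $e\in A_{\phi(e)}\cap X\subseteq X$. This completes the equivalence and, via the description of the independent sets of $M|X$ recalled above, finishes the proof.

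There is essentially no obstacle here; the lemma is a direct unwinding of the definition of a partial transversal together with the definition of matroid restriction, and the only thing to be careful about is the two-way check that $Y\subseteq X$ is preserved (which is automatic because the sets in $\mcA'$ are themselves subsets of $X$).
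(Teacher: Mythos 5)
Your argument is correct and is exactly the direct definitional check the paper has in mind: the paper states this lemma as an easy observation and omits the proof, and your two-way verification that partial transversals of $(A_i\cap X\,:\,i\in[r])$ are precisely the partial transversals of $\mcA$ contained in $X$ fills in that observation correctly.
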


We focus on presentations $(A_i\,:\,i\in[r])$ of $M$ that are of the
type guaranteed by the first part of Lemma \ref{lem:r}, that is,
$r=r(M)$; the second part of the lemma explains why other
presentations are not substantially different.

\begin{lemma}\label{lem:r}
  Each transversal matroid $M$ has a presentation $\mcA$ with
  $|\mcA|=r(M)$.  If $M$ has no coloops, then all presentations of $M$
  have exactly $r(M)$ nonempty sets (counting multiplicity).
\end{lemma}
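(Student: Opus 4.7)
The plan is to prove the second statement first---this is where the real work is---and derive the first by a short induction. For the second statement, the lower bound is easy: any $\mcA$-matching of a basis of $M$ injects $r(M)$ distinct indices, each backed by a nonempty $A_i$, so every presentation has at least $r(M)$ nonempty sets. For the upper bound I would argue the contrapositive: assume $\mcA = (A_i\,:\,i\in [k])$ has only nonempty sets but $k > r(M)$, and produce a coloop. The deficiency form of Hall's theorem gives
\[
  r(M) \;=\; k \;-\; \max_{S \subseteq [k]}\Bigl(|S| - \bigl|\textstyle\bigcup_{i \in S} A_i\bigr|\Bigr),
\]
so some $S^* \subseteq [k]$ realizes $|S^*| - |U^*| = k - r(M) \geq 1$, where $U^* := \bigcup_{i \in S^*} A_i$. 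Since no $A_i$ is empty, the case $|U^*| = 0$ is ruled out, so $|U^*| \geq 1$.

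The heart of the argument is to show $U^* \subseteq B$ for every basis $B$ of $M$. Fix an $\mcA$-matching $\phi\colon B \to [k]$. Each $b \in B \cap \phi^{-1}(S^*)$ satisfies $b \in A_{\phi(b)} \subseteq U^*$, so by injectivity of $\phi$ we get $|\phi(B) \cap S^*| = |B \cap \phi^{-1}(S^*)| \leq |U^*|$. On the other hand, $|\phi(B) \cap ([k] \setminus S^*)| \leq k - |S^*|$ and $|\phi(B)| = r(M)$ combine to give $|\phi(B) \cap S^*| \geq r(M) - (k - |S^*|) = |U^*|$. Both inequalities are therefore equalities, forcing $B \cap \phi^{-1}(S^*) = U^*$ and hence $U^* \subseteq B$. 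Since $B$ was arbitrary, every element of $U^*$ lies in every basis and is a coloop, contradicting the hypothesis.

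For the first statement, I would induct on $|E(M)|$, the empty matroid being the trivial base case. If $M$ has a coloop $a$, then $M$ decomposes as the direct sum of $M \setminus a$ with the free matroid on $\{a\}$; applying the inductive hypothesis to $M \setminus a$ (transversal by Lemma \ref{lem:restr}) and appending the set $\{a\}$ produces a presentation of $M$ of size $r(M)$. If $M$ has no coloops, start from any presentation of $M$, discard its empty sets, and invoke the second statement to conclude that exactly $r(M)$ sets remain.

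The main obstacle is the two-sided bound on $|\phi(B) \cap S^*|$: the upper bound is geometric (elements matched into $S^*$ must live in $U^*$), while the lower bound is a cardinality count that becomes tight only because $S^*$ \emph{maximizes} the deficiency. For a non-maximal $S$ a strict gap would remain between the two bounds and the argument would fail to force $\bigcup_{i \in S} A_i$ into every basis; this is the delicate point that makes it essential to work at a maximizer rather than an arbitrary witness to $k > r(M)$.
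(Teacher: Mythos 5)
The paper states Lemma \ref{lem:r} as background and supplies no proof of its own (both assertions are classical, in the spirit of Bondy--Welsh, Mason, and Brualdi's survey), so there is no in-paper argument to match yours against; judged on its own, your proof is correct. The route you take---computing $r(M)$ by the defect form of Hall's theorem from the index side, taking a deficiency-maximizing set $S^*$ of indices, and using the two-sided count on $|\phi(B)\cap S^*|$ to force $U^*=\bigcup_{i\in S^*}A_i$ into every basis, so that $U^*$ consists of coloops---is sound, as is the reduction of the first assertion to the second by splitting off coloops as direct summands (a coloop $a$ gives $M=(M\del a)\oplus M|\{a\}$, and concatenating a presentation of $M\del a$ with the single set $\{a\}$ presents $M$). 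The more traditional argument instead shows directly that the $r(M)$ sets hit by a matching of a fixed basis already present $M$; your version buys a simultaneous, self-contained treatment of both assertions at the cost of invoking the defect formula rather than the element-side Hall condition quoted in Section \ref{sec:background}. One small point to tighten: to get $|U^*|\ge 1$ you say only that no $A_i$ is empty, but you should first observe that $S^*\ne\emptyset$, which holds because the empty set has deficiency $0$ while the maximum deficiency equals $k-r(M)\ge 1$; with that noted, the nonemptiness of $U^*$, and hence the existence of a coloop, follows as you claim.
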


Given a presentation $\mcA=(A_i\,:\,i\in [r])$ of a transversal
matroid $M$ and a subset $X$ of $E(M)$, the $\mcA$-\emph{support},
$s_{\mcA}(X)$, \emph{of} $X$ is $$s_{\mcA}(X)=\{i\,:\,X\cap
A_i\ne\emptyset\}.$$ A \emph{cyclic set} in a matroid $M$ is a
(possibly empty) union of circuits; thus, $X\subseteq E(M)$ is cyclic
if and only if $M|X$ has no coloops.  Lemmas \ref{lem:restr} and
\ref{lem:r} give the next result.

\begin{cor}\label{cor:supp2} If $X$ is a cyclic set of $M[\mcA]$,
  then $|s_{\mcA}(X)|=r(X)$.
\end{cor}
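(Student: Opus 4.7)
The plan is to combine Lemmas~\ref{lem:restr} and~\ref{lem:r} directly. By Lemma~\ref{lem:restr}, the sequence $(A_i\cap X\,:\,i\in[r])$ is a presentation of $M|X$. An index $i$ contributes a nonempty term to this sequence exactly when $X\cap A_i\ne\emptyset$, i.e., exactly when $i\in s_\mcA(X)$. Since each index in $[r]$ supplies its own term of the sequence, the number of nonempty sets (counted with multiplicity) in this presentation of $M|X$ is precisely $|s_\mcA(X)|$.

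Next, because $X$ is cyclic in $M[\mcA]$, the restriction $M|X$ has no coloops. Applying the second part of Lemma~\ref{lem:r} to $M|X$, every presentation of $M|X$ has exactly $r(M|X)=r(X)$ nonempty sets, counting multiplicity. Equating the two expressions for this count yields $|s_\mcA(X)|=r(X)$, as claimed.

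The only subtle point is verifying that $|s_\mcA(X)|$ is the correct quantity to compare with Lemma~\ref{lem:r}: the lemma counts nonempty sets in a presentation \emph{with multiplicity}, while $|s_\mcA(X)|$ counts the indices $i$ for which $A_i\cap X\ne\emptyset$. These agree because the sequence $(A_i\cap X\,:\,i\in[r])$ is indexed by $[r]$, so distinct indices contribute distinct terms regardless of whether the underlying sets coincide. Once this identification is made, the corollary is immediate, so I do not anticipate any real obstacle.
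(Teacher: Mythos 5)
Your proof is correct and is exactly the argument the paper intends: the corollary is stated as an immediate consequence of Lemmas~\ref{lem:restr} and~\ref{lem:r}, obtained by restricting $\mcA$ to $X$ and counting the nonempty terms (which are indexed precisely by $s_\mcA(X)$) in a presentation of the coloop-free matroid $M|X$. Your remark about counting with multiplicity versus counting indices is the right point to check, and you resolve it correctly.
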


By Hall's theorem \cite[Theorem VIII.8.20]{aigner}, a subset $Y$ of
$E(M)$ is independent in $M$ if and only if $|s_\mcA(Z)|\geq |Z|$ for
all subsets $Z$ of $Y$.  One can prove the next lemma from this.

\begin{lemma} \label{lem:supp1}
  Let $\mcA$ be a presentation of $M$.
  \begin{enumerate}
  \item For any circuit $C$ of $M$ and element $e\in C$, we have 
    $$|s_{\mcA}(C)|=|s_{\mcA}(C-\{e\})|=r(C)=|C|-1,$$ so $s_{\mcA}(C)=
    s_{\mcA}(C-\{e\})$.
  \item If $X\subseteq E(M)$ with $|s_{\mcA}(X)|=r(X)$, then its
    closure, $\cl(X)$, is $$\cl(X) = \{e\,:\, s_{\mcA}(e)\subseteq
    s_{\mcA}(X)\}.$$
  \end{enumerate}
\end{lemma}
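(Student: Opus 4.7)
The plan is to derive both parts from Hall's theorem, Corollary~\ref{cor:supp2}, and the elementary bound $r(Y) \leq |s_{\mcA}(Y)|$, which holds for every $Y \subseteq E(M)$. That bound is just the ``easy'' direction of Hall: a basis of $M|Y$ is independent and so admits an injective $\mcA$-matching into $[r]$, and the image of that matching lies in $s_{\mcA}(Y)$.

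For part~(1), I would use that a circuit $C$ is cyclic, so Corollary~\ref{cor:supp2} already gives $|s_{\mcA}(C)| = r(C) = |C|-1$. Since $C - \{e\}$ is independent of size $|C|-1$, Hall's theorem yields $|s_{\mcA}(C - \{e\})| \geq |C|-1$. Combined with the trivial containment $s_{\mcA}(C - \{e\}) \subseteq s_{\mcA}(C)$, this forces both sets to have cardinality $|C|-1$, and a containment of finite sets of equal cardinality is an equality.

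For part~(2), I would prove the two inclusions separately. For $\cl(X) \subseteq \{e : s_{\mcA}(e) \subseteq s_{\mcA}(X)\}$, an element $e \in \cl(X)$ either lies in $X$, in which case the inclusion is trivial, or there is a circuit $C$ with $e \in C \subseteq X \cup \{e\}$; part~(1) then gives $s_{\mcA}(e) \subseteq s_{\mcA}(C) = s_{\mcA}(C - \{e\}) \subseteq s_{\mcA}(X)$. For the reverse inclusion, suppose $s_{\mcA}(e) \subseteq s_{\mcA}(X)$. Then $s_{\mcA}(X \cup \{e\}) = s_{\mcA}(X)$, and the general bound above, together with the hypothesis $|s_{\mcA}(X)| = r(X)$, yields
\[
r(X \cup \{e\}) \leq |s_{\mcA}(X \cup \{e\})| = |s_{\mcA}(X)| = r(X) \leq r(X \cup \{e\}),
\]
so equality holds throughout and $e \in \cl(X)$.

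The main obstacle is really just organizational: making sure the hypothesis $|s_{\mcA}(X)| = r(X)$ in part~(2) is used exactly where it is needed (the middle equality in the chain above), and that no circuit elimination or additional structural facts are accidentally invoked. All of the mathematical content sits in Hall's theorem and Corollary~\ref{cor:supp2}.
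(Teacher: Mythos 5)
Your proof is correct and follows exactly the route the paper intends: the lemma is stated there without proof, with the remark that it follows from Hall's theorem, and your argument fills in the details using precisely Hall's theorem, Corollary~\ref{cor:supp2}, and the easy bound $r(Y)\leq|s_{\mcA}(Y)|$. Your accounting of where the hypothesis $|s_{\mcA}(X)|=r(X)$ enters (only in the reverse inclusion of part~(2)) is also accurate.
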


Extending a presentation $\mcA=(A_i:i\in [r])$ of a transversal
matroid $M$ consists of adjoining an element $x$ that is not in $E(M)$
to some of the sets in $\mcA$.  More precisely, for an element
$x\not\in E(M)$ and a subset $I$ of $[r]$, we let $\mcA^I$ be
$(A^I_i:i\in[r])$ where
$$A^I_i=\left\{
  \begin{array}{ll}
    A_i\cup \{x\}, &\mbox{ if } i\in I,\\ 
    A_i, &\mbox{ otherwise.} 
  \end{array}
\right.$$ The matroid $M[\mcA^I]$ on the set $E(M)\cup \{x\}$ is a
rank-preserving single-element extension of $M$.  (This is the only
type of extension we consider, so below we omit the adjectives
``rank-preserving'' and ``single-element''.)  Throughout this paper,
we reserve $x$ for the element by which we extend a matroid.

We will use principal extensions of matroids, which we now recall.
For any matroid $M$ (not necessarily transversal), a subset $Y$ of
$E(M)$, and an element $x$ that is not in $E(M)$, the \emph{principal
  extension} $M+_{_Y}x$ of $M$ is the matroid on $E(M)\cup \{x\}$ with the
rank function $r'$ where, for $Z\subseteq E(M)$, we have $r'
(Z)=r_M(Z)$ and
$$r'(Z \cup \{x\}) = \left\{
  \begin{array}{ll}
    r_M (Z),  & \text{if}\,\, Y \subseteq \cl_M(Z),\\
    r_M (Z) + 1, & \text{otherwise.}  
\end{array}\right.  $$
Thus, $M+_{_Y}x =M+_{_{Y'}}x$ whenever $\cl_M(Y) =\cl_M(Y')$.
Geometrically, $M+_{_Y}x$ is formed by putting $x$ freely in the flat
$\cl_M(Y)$.  A routine argument using matchings and part (2) of Lemma
\ref{lem:supp1} yields the following result.

\begin{lemma}\label{lem:princip} 
  Let $\mcA$ be a presentation of a transversal matroid $M$.  If $Y$
  is a subset of $E(M)$ with $|s_\mcA(Y)|=r(Y)$, then
  $M[\mcA^{s_\mcA(Y)}]$ is the principal extension $M+_{_Y}x$, and,
  relative to containment, the least cyclic flat of
  $M[\mcA^{s_\mcA(Y)}]$ that contains $x$ is $\cl_M(Y)\cup \{x\}$.
\end{lemma}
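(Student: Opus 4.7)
My plan is to verify the two assertions separately, writing $I=s_\mcA(Y)$; the main obstacle is the upper bound of the rank calculation when $Y\subseteq\cl_M(Z)$, where an augmenting-path argument combined with the subadditivity of $r$ uses the hypothesis $|s_\mcA(Y)|=r(Y)$ essentially. The first goal is to match the rank function of $M[\mcA^I]$ with the defining formula for $M+_{_Y}x$; the second is to identify $\cl_M(Y)\cup\{x\}$ as the least cyclic flat of $M[\mcA^I]$ containing $x$.

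For the rank function, fix $Z\subseteq E(M)$. Since $\mcA^I$ restricts to $\mcA$ on $E(M)$, we have $r_{M[\mcA^I]}(Z)=r_M(Z)$, so the interesting quantity is $r_{M[\mcA^I]}(Z\cup\{x\})$. When $Y\not\subseteq\cl_M(Z)$, choose $y\in Y\setminus\cl_M(Z)$ and a basis $B$ of $Z$; then $B\cup\{y\}$ is independent in $M$ with an $\mcA$-matching $\phi$, and $\phi(y)\in s_\mcA(y)\subseteq I$, so substituting $x$ for $y$ in $\phi$ yields an $\mcA^I$-matching of $B\cup\{x\}$, giving $r_{M[\mcA^I]}(Z\cup\{x\})\ge r_M(Z)+1$. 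The harder direction is that $B\cup\{x\}$ is dependent in $M[\mcA^I]$ whenever $B$ is a basis of $Z$ and $Y\subseteq\cl_M(Z)$. Assuming otherwise, I pick an $\mcA^I$-matching $\psi$ with $\psi(x)=j\in I$ and run an augmenting-path search for $\psi$ in the bipartite graph on $E(M)\cup[r]$, restricted to element-side vertices in $Y$, starting from the unmatched index $j$: from a reached $i\in I$ follow $\mcA$-edges forward to elements of $A_i\cap Y$, and from any reached $y\in Y\cap B$ follow the matching edge $y\to\psi(y)$, which lies in $I$ since $s_\mcA(y)\subseteq I$. Either the search reaches some $y\in Y\setminus B$, in which case augmenting produces an $\mcA$-matching of $B\cup\{y\}$ and thus $y\notin\cl_M(B)=\cl_M(Z)$, contradicting $Y\subseteq\cl_M(Z)$; or the search stalls, yielding a set $Q\subseteq Y\cap B$ and an index set $R\subseteq I$ with $Q=\bigl(\bigcup_{i\in R}A_i\bigr)\cap Y$ and $|R|=|Q|+1$. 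In the stalled case every $y'\in Y\setminus Q$ satisfies $s_\mcA(y')\subseteq I\setminus R$, so $r_M(Y\setminus Q)\le|I\setminus R|=r(Y)-|Q|-1$, and subadditivity gives $r(Y)\le r(Q)+r(Y\setminus Q)\le r(Y)-1$, which is absurd.

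For the second assertion, $\cl_M(Y)\cup\{x\}$ equals $\cl_{M+_{_Y}x}(Y)$ by the rank formula just established, so it is a flat of $M[\mcA^I]$; it is cyclic because any basis $B_Y$ of $Y$ makes $B_Y\cup\{x\}$ a circuit containing $x$ and every element of $B_Y$, while each $z\in\cl_M(Y)\setminus B_Y$ lies on a circuit of $M$ inside $B_Y\cup\{z\}\subseteq\cl_M(Y)$. For minimality, let $F$ be any cyclic flat of $M[\mcA^I]$ containing $x$ and pick a circuit $C\ni x$ inside $F$. Part~(1) of Lemma~\ref{lem:supp1} applied in $M[\mcA^I]$ gives $s_{\mcA^I}(C)=s_{\mcA^I}(C\setminus\{x\})=s_\mcA(C\setminus\{x\})$ of size $|C|-1=r_M(C\setminus\{x\})$; since $x\in C$ also forces $I\subseteq s_{\mcA^I}(C)$, we conclude $I\subseteq s_\mcA(C\setminus\{x\})$. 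Part~(2) of Lemma~\ref{lem:supp1} applied to $Y$ and to $C\setminus\{x\}$ then yields $\cl_M(Y)=\{e:s_\mcA(e)\subseteq I\}\subseteq\{e:s_\mcA(e)\subseteq s_\mcA(C\setminus\{x\})\}=\cl_M(C\setminus\{x\})$; since $Y\subseteq\cl_M(C\setminus\{x\})$, the rank formula shows $\cl_{M[\mcA^I]}(C\setminus\{x\})=\cl_M(C\setminus\{x\})\cup\{x\}\supseteq\cl_M(Y)\cup\{x\}$, and the left side is contained in the flat $F$.
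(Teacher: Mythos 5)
Your proof is correct. The paper itself gives no details here -- it states that ``a routine argument using matchings and part (2) of Lemma \ref{lem:supp1}'' yields the result -- and your write-up is a legitimate filling-in of that routine argument: the substitution of $x$ for an element $y\in Y\setminus\cl_M(Z)$ handles the case $Y\not\subseteq\cl_M(Z)$, your augmenting-path/stall analysis (with the stalled case contradicting $|s_\mcA(Y)|=r(Y)$ via subadditivity) handles the case $Y\subseteq\cl_M(Z)$, and the cyclic-flat minimality argument via a circuit $C\ni x$ in $F$, parts (1) and (2) of Lemma \ref{lem:supp1}, and the already-established rank formula is exactly the kind of use of Lemma \ref{lem:supp1}(2) the paper has in mind. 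The one place where your route is heavier than necessary is the ``harder direction'': since $s_{\mcA^I}(x)=I$ and $s_\mcA(y)\subseteq I$ for all $y\in Y$, every independent set in $Y\cup\{x\}$ matches into $I$, so $r_{M[\mcA^I]}(Y\cup\{x\})\le|I|=r(Y)$ and hence $x\in\cl_{M[\mcA^I]}(Y)$; then $Y\subseteq\cl_M(Z)\subseteq\cl_{M[\mcA^I]}(Z)$ gives $x\in\cl_{M[\mcA^I]}(Y)\subseteq\cl_{M[\mcA^I]}(Z)$ directly, with no augmenting-path case analysis. Your version buys nothing extra here beyond being self-contained at the level of bipartite matchings, but it is sound: the augmentation in the unstalled case, the count $|R|=|Q|+1$, and the inclusion $s_\mcA(Y\setminus Q)\subseteq I\setminus R$ all check out.
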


A transversal matroid typically has many presentations, and there is a
natural order on them.  A mild variant of the customary order on
presentations best meets our needs.  For presentations
$\mcA=(A_i:i\in[r])$ and $\mcB= (B_i:i\in[r])$ of $M$, we set
$\mcA\preceq \mcB$ if $A_i\subseteq B_i$ for all $i\in [r]$.  We write
$\mcA\prec \mcB$ if, in addition, at least one of these inclusions is
strict.  We say that $\mcB$ \emph{covers} $\mcA$, and we write
$\mcA\cov\mcB$, if $\mcA\prec \mcB$ and there is no presentation
$\mcC$ of $M$ with $\mcA \prec \mcC \prec \mcB$.  (The customary order
identifies $(A_i\,:\,i\in[r])$ and $(A_{\tau(i)}\,:\,i\in [r])$ for
any permutation $\tau$ of $[r]$, and so sets $\mcA\leq \mcB$ if, up to
re-indexing, $A_i\subseteq B_i$ for all $i\in [r]$.)

Mason \cite{JHMThesis} showed that if $(A_i\,:\,i\in[r])$ and
$(B_i\,:\,i\in[r])$ are maximal presentations of the same transversal
matroid, then there is a permutation $\tau$ of $[r]$ with
$A_{\tau(i)}=B_i$ for all $i\in [r]$.  (Minimal presentations, in
contrast, are often more varied.)  The next lemma, which is due to
Bondy and Welsh \cite{bw} and plays important roles in this paper,
gives a constructive way to find the maximal presentations of a
transversal matroid.

\begin{lemma}\label{lem:max}
  Let $\mcA =(A_i:i\in[r])$ be a presentation of $M$.  Let $i$ be in
  $[r]$ and $e$ in $E(M)-A_i$.  The following statements are
  equivalent:
  \begin{enumerate}
  \item the set system obtained from $\mcA$ by replacing $A_i$ by
    $A_i\cup \{e\}$ is also a presentation of $M$, and
  \item $e$ is a coloop of the deletion $M\del A_i$.
  \end{enumerate}
\end{lemma}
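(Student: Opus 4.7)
The plan is to prove each implication separately; throughout, let $\mcA'$ denote the set system obtained from $\mcA$ by replacing $A_i$ with $A_i\cup\{e\}$.

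For $(1)\Rightarrow(2)$ I would argue the contrapositive. If $e$ is not a coloop of $M\del A_i$, choose a basis $B$ of $M\del A_i$ that avoids $e$. Since $B\subseteq E(M)-A_i$ is independent in $M$, any $\mcA$-matching of $B$ is automatically forced into $[r]-\{i\}$, so extending it by $e\mapsto i$ produces an $\mcA'$-matching of $B\cup\{e\}$. Yet $B\cup\{e\}$ is dependent in $M\del A_i$, hence in $M$, so $\mcA'$ has a partial transversal that is not independent in $M$ and therefore cannot present $M$.

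For $(2)\Rightarrow(1)$, assume $e$ is a coloop of $M\del A_i$. Since adding $e$ to $A_i$ only enlarges the family of partial transversals, it suffices to show that every partial transversal $Y$ of $\mcA'$ is already independent in $M$. The only new matchings are those sending $e$ to $i$, so I may assume $e\in Y$ and that $Y-\{e\}$ admits an $\mcA$-matching $\phi$ into $[r]-\{i\}$. Suppose for contradiction that $Y$ is dependent in $M$ and fix a circuit $C\subseteq Y$; since $Y-\{e\}$ is independent, $e\in C$. I would now show $C\subseteq E(M)-A_i$: by Lemma \ref{lem:supp1}(1), $s_\mcA(C)=s_\mcA(C-\{e\})$ and $|s_\mcA(C)|=|C|-1$, while restricting $\phi$ to $C-\{e\}$ injects this set of size $|C|-1$ into $s_\mcA(C-\{e\})\cap([r]-\{i\})$; counting then forces $s_\mcA(C)\subseteq[r]-\{i\}$. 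Hence no element of $C$ meets $A_i$, so $C$ is a circuit of $M\del A_i$ that witnesses $e\in\cl_{M\del A_i}((E(M)-A_i)-\{e\})$, contradicting the coloop assumption.

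The main obstacle is this support-confinement step in the second direction: the real content of the lemma lies in recognizing that Lemma \ref{lem:supp1}(1), combined with the fact that $\phi$ avoids index $i$, is exactly the bridge between the matching-theoretic hypothesis on $\mcA'$ and the matroid-theoretic coloop condition in $M\del A_i$. Everything else is straightforward bookkeeping about matchings and supports.
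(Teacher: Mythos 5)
Your proof is correct. Note that the paper does not prove this lemma at all: it is quoted as a known result of Bondy and Welsh \cite{bw}, so there is no in-paper argument to compare against. Your two directions are sound. The contrapositive for (1)$\Rightarrow$(2) works because a basis $B$ of $M\del A_i$ avoiding $e$ is matched by $\mcA$ necessarily into $[r]-\{i\}$ (as $B\cap A_i=\emptyset$), so appending $e\mapsto i$ exhibits the dependent set $B\cup\{e\}$ as a partial transversal of the enlarged system. For (2)$\Rightarrow$(1), the support-confinement step is the right bridge and is logically clean: Lemma \ref{lem:supp1} is derived in the paper directly from Hall's theorem, independently of Lemma \ref{lem:max}, so there is no circularity in invoking it; the count $|s_\mcA(C-\{e\})|=|C|-1$ against the injection $\phi|_{C-\{e\}}$ into $s_\mcA(C-\{e\})\cap([r]-\{i\})$ indeed forces $i\notin s_\mcA(C)$, making $C$ a circuit of $M\del A_i$ through $e$ and contradicting the coloop hypothesis. (If you wanted an argument not leaning on Lemma \ref{lem:supp1}, one could instead augment $Y-\{e\}$ to a basis of $M\del A_i$ containing $e$ and rebuild a matching, but your route is shorter and uses only what the paper has already established before this lemma.)
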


A routine argument shows that the complement $E(M)-A_i$ of any set
$A_i$ in $\mcA$ is a flat of $M[\mcA]$.  By Lemma \ref{lem:max}, the
complement of each set in a maximal presentation of $M$ is a cyclic
flat of $M$.  Bondy and Welsh \cite{bw} and Las Vergnas \cite{mlv}
proved the next result about the sets in minimal presentations.

\begin{lemma}\label{lem:min}
  A presentation $(C_i:i\in[r])$ of $M$ is minimal if and only if each
  set $C_i$ is a cocircuit of $M$, that is, $E(M)-C_i$ is a hyperplane
  of $M$.
\end{lemma}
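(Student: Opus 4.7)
For the easier direction ($\Leftarrow$), I argue by contradiction: suppose each $C_i$ is a cocircuit and some $\mcC' \prec \mcC$ presents $M$. Then $C'_i \subsetneq C_i$ for some $i$, so $E(M) - C'_i$ is a flat (by the observation preceding the lemma) strictly containing the hyperplane $E(M) - C_i$. Since hyperplanes are maximal proper flats, $E(M) - C'_i = E(M)$, i.e., $C'_i = \emptyset$, and then $M[\mcC']$ has no partial transversal of size $r$, giving $r(M[\mcC']) \le r - 1 < r(M)$, a contradiction.

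For the converse ($\Rightarrow$), suppose $\mcC$ is minimal and fix $i$; let $F_i := E(M) - C_i$, which is a flat. The same rank argument forces $C_i \ne \emptyset$, so $F_i$ is a proper flat and $r(F_i) \le r - 1$. I will show $r(F_i) = r - 1$, arguing by contradiction: assume $r(F_i) \le r - 2$, and exhibit some $e \in C_i$ whose removal from $C_i$ still presents $M$, contradicting minimality. Since $r(F_i) \le r - 2$, there is a hyperplane $H$ with $F_i \subsetneq H$, and I will search for $e$ inside $X := H \cap C_i \ne \emptyset$.

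Removability of $e \in X$ reduces, via Hall's theorem applied basis-by-basis, to the non-existence of a tight obstruction: an independent set $S$ contained in some basis $B \ni e$ with $e \in S$, $S \cap C_i = \{e\}$, $i \in s_{\mcC}(S)$, and $|s_{\mcC}(S)| = |S|$. Any such $S$ sits in $F_i \cup \{e\} \subseteq H$ and, by Lemma~\ref{lem:supp1}(2), satisfies $\cl(S) \subseteq H$, so all candidate obstructions live inside the hyperplane $H$. The main obstacle, and the heart of the proof, will be to select $e \in X$ so that no such obstruction occurs. My plan is to exploit the restricted system $(C_j \cap H : j \in [r])$ of $M|H$ from Lemma~\ref{lem:restr}, which has $r$ sets while $r(M|H) = r - 1$; the resulting slack inside $H$ should let me locate a removable $e \in X$. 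I expect to combine the sublattice structure of Hall-tight subsets (closed under union and intersection) with the closure description of tight sets from Lemma~\ref{lem:supp1}(2) to argue that if an obstruction existed for every $e \in X$, their combination would produce a tight configuration whose closure inside $H$ strictly enlarges $F_i$, contradicting $r(F_i) \le r - 2$ and finishing the proof.
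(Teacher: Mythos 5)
The paper itself does not prove this lemma; it is quoted from Bondy--Welsh and Las Vergnas, so there is no in-paper argument to compare against, and your proposal must stand on its own. Your backward direction does: if some $\mcC'\prec\mcC$ presented $M$, then $E(M)-C'_i$ would be a flat of $M$ strictly containing the hyperplane $E(M)-C_i$, forcing $C'_i=\emptyset$ and hence $r(M[\mcC'])\le r-1<r(M)$. Your forward direction, however, has a genuine gap. The reduction of removability of $e$ from $C_i$ to the non-existence of an independent ``tight obstruction'' $S$ with $e\in S$, $S\cap C_i=\{e\}$, $|s_\mcC(S)|=|S|$ is correct, but the heart of the argument is only announced (``I will show'', ``My plan is'', ``I expect to''), and the announced plan does not close. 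Quantitatively: if every $e\in X=H\cap C_i$ had an obstruction $S_e$, then writing $T_e=S_e-\{e\}\subseteq F_i$ one checks $s_\mcC(T_e)=s_\mcC(S_e)-\{i\}$, hence $s_\mcC(e)\subseteq s_\mcC(T_e)\cup\{i\}$; setting $T=\bigcup_{e\in X}T_e$ and combining the union property of tight sets with submodularity against $F_i$ yields only $r(H)\le r(F_i)+1$, i.e.\ $r(F_i)\ge r-2$. This is consistent with your hypothesis $r(F_i)\le r-2$, so no contradiction arises in the boundary case $r(F_i)=r-2$. Likewise, a ``tight configuration whose closure strictly enlarges $F_i$'' inside $H$ cannot contradict $r(F_i)\le r-2$, since $H$ itself has rank $r-1$; the stated endgame does not produce a contradiction.

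The repair is to discard the hyperplane $H$ and run the same union argument over all of $C_i$. If every $e\in C_i$ had an obstruction $S_e$, set $T=\bigcup_{e\in C_i}(S_e-\{e\})\subseteq F_i$. Each $S_e-\{e\}$ is independent and tight, so $r(T)=|s_\mcC(T)|$ by Lemma \ref{lem:supportunion}; moreover $s_\mcC(e)\subseteq s_\mcC(T)\cup\{i\}$ for every $e\in C_i$, so $s_\mcC(C_i\cup T)\subseteq s_\mcC(T)\cup\{i\}$ and Hall's condition gives $r(C_i\cup T)\le |s_\mcC(T)|+1=r(T)+1$. Since $(C_i\cup T)\cap F_i=T$ and $(C_i\cup T)\cup F_i=E(M)$, submodularity gives $r(M)\le r(C_i\cup T)+r(F_i)-r(T)\le r(F_i)+1$, forcing $r(F_i)\ge r-1$. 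Contrapositively, $r(F_i)\le r-2$ guarantees a removable element of $C_i$, which is exactly what you need; with this substitution your overall architecture is sound.
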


Thus, $(C_i:i\in[r])$ is minimal if and only if $r(M\del C_i)=r-1$ for
all $i\in [r]$.  The next result, by Brualdi and Dinolt \cite{brudin},
follows from the last two lemmas.

\begin{lemma}
  If $\mcA=(A_i:i\in[r])$ is a presentation of $M$ and
  $\mcC=(C_i:i\in[r])$ is a minimal presentation of $M$ with
  $\mcC\preceq\mcA$, then $$|A_i-C_i| = r(M\del C_i)-r(M\del A_i) =
  r-1-r(M\del A_i).$$
\end{lemma}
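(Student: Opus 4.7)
The plan is to tackle the two equalities separately. The second equality is immediate from Lemma \ref{lem:min}: since $\mcC$ is a minimal presentation, $E(M)-C_i$ is a hyperplane, so $r(M\del C_i) = r-1$.

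For the first equality, my strategy is to show that every element of $A_i - C_i$ is a coloop of $M\del C_i$, and then count coloops. The main tool I would invoke is a sandwich observation: for any set system $\mcB = (B_j : j\in[r])$ with $C_j \subseteq B_j \subseteq A_j$ for all $j$, every partial transversal of $\mcC$ is a partial transversal of $\mcB$, and every partial transversal of $\mcB$ is a partial transversal of $\mcA$; since the outer families both coincide with the independent sets of $M$, so does the middle family, making $\mcB$ a presentation of $M$. Applying this with $B_i = C_i \cup \{e\}$ and $B_j = C_j$ for $j\neq i$ shows that for each $e \in A_i - C_i$, replacing $C_i$ by $C_i \cup \{e\}$ in $\mcC$ yields a presentation of $M$; Lemma \ref{lem:max} then forces $e$ to be a coloop of $M\del C_i$.

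To finish, I would iteratively delete these coloops from $M\del C_i$, using the straightforward fact that deleting a coloop $e'$ from a matroid $N$ preserves the coloop status of any other coloop (since every circuit of $N\del e'$ is a circuit of $N$ that avoids $e'$). Each such deletion drops the rank by exactly $1$, so after deleting all of $A_i - C_i$ one obtains $r(M\del A_i) = r(M\del C_i) - |A_i - C_i|$, which gives the first equality. I do not foresee serious obstacles: the sandwich observation does the main conceptual work. The only subtle point is bridging the single-element scope of Lemma \ref{lem:max} to the full set $A_i - C_i$; once one sees that the sandwich lets us enlarge $C_i$ by any single $e \in A_i - C_i$ while remaining at a presentation, this bridge is immediate.
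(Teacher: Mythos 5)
Your proof is correct and takes essentially the route the paper indicates: the paper gives no details, merely noting the result ``follows from the last two lemmas,'' and your argument---the second equality from Lemma \ref{lem:min}, the sandwich observation plus Lemma \ref{lem:max} to show each element of $A_i-C_i$ is a coloop of $M\del C_i$, and then counting rank drops under coloop deletion---is a sound filling-in of exactly that derivation.
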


\begin{cor}\label{cor:presgraded}
  The ordered set of presentations of a rank-$r$ transversal matroid
  $M$ is ranked; the rank of a presentation $(A_i:i\in[r])$
  is $$r(r-1)-\sum_{i=1}^r r(M\del A_i).$$
\end{cor}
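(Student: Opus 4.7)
The plan is to adopt $\rho(\mcA) := r(r-1) - \sum_{i=1}^r r(M\del A_i)$ as a candidate rank function on the poset of presentations of $M$ under $\preceq$, and verify three things: it vanishes on the minimum elements, it is strictly order-preserving, and it jumps by exactly $1$ on each cover.

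The first two steps essentially repackage Lemma~\ref{lem:min} together with the preceding Brualdi--Dinolt lemma. For a minimal presentation $\mcC$, each $r(M\del C_i) = r-1$, giving $\rho(\mcC)=0$. For a general $\mcA$, I would fix any minimal $\mcC \preceq \mcA$ and sum the identity $|A_i - C_i| = r-1 - r(M\del A_i)$ over $i$ to obtain $\rho(\mcA) = \sum_i |A_i - C_i| \ge 0$. The same $\mcC$ lies below any $\mcB \succeq \mcA$, so subtraction gives $\rho(\mcB) - \rho(\mcA) = \sum_i |B_i - A_i|$, which is positive whenever $\mcA \prec \mcB$.

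The main obstacle is the cover step: showing that $\mcA \cov \mcB$ forces $\sum_i |B_i - A_i| = 1$. My plan hinges on a short interval property, which I would record as a preliminary observation: if $\mcA \preceq \mcA' \preceq \mcB$ componentwise and both $\mcA$ and $\mcB$ are presentations of $M$, then $\mcA'$ is automatically a presentation of $M$. This falls out from chasing matchings---every $\mcA$-matching is automatically an $\mcA'$-matching and every $\mcA'$-matching is a $\mcB$-matching---so the families of partial transversals nest as $\mcI(M[\mcA]) \subseteq \mcI(M[\mcA']) \subseteq \mcI(M[\mcB])$, and equality at the two ends forces equality throughout.

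Granted the interval property, suppose toward a contradiction that $\mcA \cov \mcB$ yet $\sum_i |B_i - A_i| \ge 2$. Pick any $i$ and any $e \in B_i - A_i$, and let $\mcA'$ be obtained from $\mcA$ by adjoining $e$ to $A_i$; then $\mcA \prec \mcA' \prec \mcB$, and $\mcA'$ is a presentation of $M$ by the interval property, contradicting the covering relation. Hence each cover adjoins exactly one element to exactly one set, so $\rho$ jumps by $1$ along every cover and is the claimed rank function on the poset of presentations.
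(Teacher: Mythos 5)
Your proof is correct and follows the route the paper implicitly intends: summing the Brualdi--Dinolt identity $|A_i-C_i|=r-1-r(M\del A_i)$ over $i$ yields the formula, and the cover step reduces to your interval observation that any set system sandwiched between two presentations of $M$ is again a presentation of $M$ (your nesting of partial transversals is just Lemma \ref{lem:inclusinggivesweak} applied twice, with equality forced at both ends). Nothing further is needed.
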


This corollary applies to both the order we focus on,
$\mcA\preceq\mcB$, and the more customary order, $\mcA\leq \mcB$; the
rank of a presentation is the same in both orders.

The \emph{weak order} $\leq_w$ on matroids on the same set $E$ is
defined as follows: $M\leq_w N$ if $r_M(X)\leq r_N(X)$ for all subsets
$X$ of $E$; equivalently, every independent set of $M$ is independent
in $N$.  This captures the idea that $N$ is freer than $M$.  The next
two lemmas are simple but useful observations.

\begin{lemma}\label{lem:inclusinggivesweak}
  Let $M=M[(A_i\,:\,i\in [r])]$ and $N=M[(B_i\,:\,i\in [r])]$, where
  $M$ and $N$ are defined on the same set.  If $A_i\subseteq B_i$ for
  all $i\in[r]$, then $M\leq_w N$.
\end{lemma}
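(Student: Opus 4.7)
The plan is to use the characterization of the weak order stated in the definition: to prove $M\leq_w N$, it suffices to show that every independent set of $M$ is independent in $N$. Since both matroids are presented as transversal matroids, independence can be reformulated via the Edmonds--Fulkerson definition in terms of partial transversals, so the task reduces to showing that every partial transversal of $\mcA=(A_i:i\in[r])$ is a partial transversal of $\mcB=(B_i:i\in[r])$.

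Given an independent set $X$ of $M$, I would take an $\mcA$-matching $\phi\colon X\to [r]$, so $e\in A_{\phi(e)}$ for every $e\in X$. The inclusion $A_i\subseteq B_i$ for all $i$ then gives $e\in B_{\phi(e)}$ for every $e\in X$, so the same injection $\phi$ is a $\mcB$-matching of $X$ into $[r]$. Hence $X$ is a partial transversal of $\mcB$, which is to say $X$ is independent in $N$. Since this holds for every independent set of $M$, we conclude $M\leq_w N$.

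There is no real obstacle here; the argument is just an unwinding of the definitions of a presentation, a partial transversal, and the weak order, and the key observation is that the very same matching witnessing independence in $M$ witnesses independence in $N$. One could alternatively phrase this as: the rank function of a transversal matroid is a monotone function of the presentation in the componentwise inclusion order, but the matching argument above is the cleanest route.
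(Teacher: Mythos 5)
Your proof is correct and is exactly the argument the paper has in mind (the paper states this lemma without proof as a "simple but useful observation"): an $\mcA$-matching of an independent set is automatically a $\mcB$-matching when $A_i\subseteq B_i$, so every independent set of $M$ is independent in $N$, which is the stated equivalent form of $M\leq_w N$. No issues.
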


\begin{lemma}\label{lem:useweak}
  Assume that $M\leq_w N$ and $M\del e = N\del e$.  If $e$ is a coloop
  of $M$, then $e$ is a coloop of $N$, and so $M=N$.
\end{lemma}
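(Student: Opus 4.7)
\bigskip

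\noindent\textbf{Proof plan.} The plan is to chase the hypotheses through the rank function: first show that $M$ and $N$ have the same rank, then deduce that $e$ is a coloop of $N$, and finally that a matroid is determined by its deletion of a coloop together with the coloop.

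First I would translate ``$e$ is a coloop of $M$'' into the rank statement $r(M) = r(M\del e) + 1$. Applying the weak-order hypothesis to $X = E(M)$ gives $r(M) \leq r(N)$. In the other direction, the general bound $r(N) \leq r(N\del e) + 1$ combined with the hypothesis $M\del e = N\del e$ yields
$$r(N) \leq r(N\del e) + 1 = r(M\del e) + 1 = r(M).$$
Thus $r(M) = r(N)$, and substituting back shows $r(N) = r(N\del e) + 1$, so $e$ is a coloop of $N$.

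For the equality $M = N$, I would verify $r_M(X) = r_N(X)$ for every $X \subseteq E(M)$ by splitting on whether $e \in X$. If $e \notin X$, both equal $r_{M\del e}(X) = r_{N\del e}(X)$. If $e \in X$, then because $e$ is a coloop in each matroid we have $r_M(X) = r_M(X - e) + 1$ and $r_N(X) = r_N(X - e) + 1$, and the two right-hand sides agree by the previous case. Hence $M = N$.

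The argument is almost all bookkeeping; the only step that needs any care is the upper bound $r(N) \leq r(M)$, which is what forces the total ranks to coincide and thereby promotes $e$ to a coloop of $N$. Once that is in hand, the equality of the two matroids is immediate from the rank-function split on $e$.
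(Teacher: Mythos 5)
Your proof is correct, and it is essentially the intended argument: the paper states this lemma without proof as a ``simple but useful observation,'' and the rank-function bookkeeping you carry out (forcing $r(M)=r(N)$ via $r(N)\leq r(N\del e)+1=r(M\del e)+1=r(M)\leq r(N)$, then comparing $r_M(X)$ and $r_N(X)$ according to whether $e\in X$, using that a coloop is a coloop of every restriction containing it) is exactly the routine verification being left to the reader.
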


Lastly, we recall how to think of transversal matroids geometrically
and to give affine representations of those of low rank, as in Figures
\ref{fig:example} and \ref{fig:example2}.  A set system
$\mcA=(A_i\,:\,i\in[r])$ on $E$ can be encoded by a $0$-$1$ matrix
with $r$ rows whose columns are indexed by the elements of $E$ in
which the $i,e$ entry is $1$ if and only if $e\in A_i$.  If we replace
the $1$s in this matrix by distinct variables, say over $\mathbb{R}$,
then it follows from the permutation expansion of determinants that
the linearly independent columns are precisely the partial
transversals of $\mcA$, so this is a matrix representation of
$M[\mcA]$.  One can in turn replace the variables by non-negative real
numbers and preserve which square submatrices have nonzero
determinants; one can also scale the columns so that the sum of the
entries in each nonzero column is $1$.  In this way, each non-loop of
$M$ is represented by a point in the convex hull of the standard basis
vectors.  This yields the following geometric picture: label the
vertices of a simplex $1,2,\ldots,r$ and think of associating $A_i$ to
the $i$-th vertex, then place each point $e$ of $E$ freely (relative
to the other points) in the face of the simplex spanned by
$s_\mcA(e)$.

\section{A closure operator and two isomorphic distributive
  lattices}\label{sec:lattice}

Let $\mcA$ be a presentation of $M$.  In \cite{extpres}, we introduced
the ordered set $T_\mcA$ of transversal extensions of $M$ that have
presentations that extend $\mcA$, ordering $T_\mcA$ by the weak order.
As the results in this paper demonstrate, the lattice $L_\mcA$ of
subsets of $[r(M)]$ that we define in this section and show to be
isomorphic to $T_\mcA$ is very useful for studying $T_\mcA$.

Recall that we consider only single-element rank-preserving
extensions.  Also, $x$ always denotes the element by which we extend a
matroid.

\subsection{The lattice $L_\mcA$}

The first lattice we discuss is the lattice of closed sets for a
closure operator that we introduce below, so we first recall closure
operators (see, e.g., \cite[p.~49]{aigner}).  A \emph{closure
  operator} on a set $S$ is a map $\sigma:2^S\to 2^S$ for which
\begin{enumerate}
\item $X\subseteq \sigma(X)$ for all $X\subseteq S$,
\item if $X\subseteq Y\subseteq S$, then $\sigma(X)\subseteq
  \sigma(Y)$, and
\item $\sigma(\sigma(X))= \sigma(X)$ for all $X\subseteq S$.
\end{enumerate}
Given a closure operator $\sigma:2^S\to 2^S$, a \emph{$\sigma$-closed
  set} is a subset $X$ of $S$ with $\sigma(X)=X$.  The set of
$\sigma$-closed sets, ordered by containment, is a lattice; join and
meet are given by $X\join Y = \sigma(X\cup Y)$ and $X\meet Y = X\cap
Y$. By property (1), the set $S$ is $\sigma$-closed.

Let $\mcA$ be a presentation of a rank-$r$ transversal matroid $M$.
By Lemma \ref{lem:max}, for each subset $I$ of $[r]$, there is a
greatest subset $K$ of $[r]$, relative to containment, for which
$M[\mcA^I]=M[\mcA^K]$, namely $$K=I\cup \{k\in[r]-I\,:\,x \text{ is a
  coloop of } (M[\mcA^I])\del A_k\};$$ define a map
$\sigma_\mcA:2^{[r]}\to 2^{[r]}$ by setting $\sigma_\mcA(I)=K$.  We
next show that $\sigma_\mcA$ is a closure operator.  We use $L_\mcA$
to denote the lattice of $\sigma_\mcA$-closed sets.  See Figure
\ref{fig:example} for examples.

\begin{figure}
\centering
\begin{tikzpicture}[scale=1]
  \node[inner sep = 0mm] (1) at (0,1) {};
  \node at (-1,0.7) {\small $A_1=\{a,b,c\}$};
  \node[inner sep = 0mm] (1') at (1.45,1) {};
  \node[inner sep = 0mm] (2) at (1.5,0.2) {};
  \node at (1.5,-0.1) {\small $A_3=\{d,e,f,g,h,i\}$};
  \node[inner sep = 0mm] (3) at (3,1) {};
  \node at (3.7,1.35) {\small $A_4=\{g,h,i\}$};
  \node[inner sep = 0mm] (3') at (1.55,1) {};
  \node[inner sep = 0mm] (4) at (1.5,2.25) {};
  \node at (1.5,2.5) {\small $A'_2=\{b,c,d,e,f\}$};
\foreach \from/\to in {1/2,1/1',3'/3,1/4,2/3,2/4,3/4}
\draw(\from)--(\to);

\filldraw (0,1) node[above left] {$a$} circle  (2pt);
\filldraw (0.75,1.625) node[above left] {$b$} circle  (2pt);
\filldraw (1.1,1.916) node[above left] {$c$} circle  (2pt);
\filldraw (1.5,1.7) node[right] {$d$} circle  (2pt);
\filldraw (1.5,1.25) node[right] {$e$} circle  (2pt);
\filldraw (1.5,0.75) node[right] {$f$} circle  (2pt);
\filldraw (1.9,0.413) node[below right] {$g$} circle  (2pt);
\filldraw (2.25,0.6) node[below right] {$h$} circle  (2pt);
\filldraw (2.6,0.786) node[below right] {$i$} circle  (2pt);

  \node[inner sep = 0.3mm] (em) at (6.8,0) {\small $\emptyset$};
  \node[inner sep = 0.3mm]  (s1) at (5.6,0.75) {\small $\{1\}$};
  \node[inner sep = 0.3mm] (s2) at (6.8,0.75) {\small $\{2\}$};
  \node[inner sep = 0.3mm] (s3) at (8,0.75) {\small $\{3\}$};
  \node[inner sep = 0.3mm] (s12) at (5.6,1.5) {\small $\{1,2\}$};
  \node[inner sep = 0.3mm] (s13) at (6.8,1.5) {\small $\{1,3\}$};
  \node[inner sep = 0.3mm] (s23) at (8,1.5) {\small $\{2,3\}$};
  \node[inner sep = 0.3mm] (s34) at (9.2,1.5) {\small $\{3,4\}$};
  \node[inner sep = 0.3mm] (s123) at (6.8,2.25) {\small $\{1,2,3\}$};
  \node[inner sep = 0.3mm] (s134) at (8,2.25) {\small $\{1,3,4\}$};
  \node[inner sep = 0.3mm] (s234) at (9.2,2.25) {\small $\{2,3,4\}$};
  \node[inner sep = 0.3mm] (s1234) at (8,3) {\small $\{1,2,3,4\}$};
\foreach \from/\to in
{em/s1,em/s2,em/s3,s1/s12,s1/s13,s2/s23,s2/s12,s3/s23,s3/s13,s3/s34,s12/s123,
s13/s123,s23/s123,s13/s134,s34/s134,s23/s234,s34/s234,s123/s1234,
s134/s1234,s234/s1234}
\draw(\from)--(\to);

  \node[inner sep = 0mm] (u1) at (0,5) {};
  \node at (-1,4.7) {\small $A_1=\{a,b,c\}$};
  \node[inner sep = 0mm] (u1') at (1.45,5) {};
  \node[inner sep = 0mm] (u2) at (1.5,4.2) {};
  \node at (1.5,3.9) {\small $A_3=\{d,e,f,g,h,i\}$};
  \node[inner sep = 0mm] (u3) at (3,5) {};
  \node at (3.7,5.35) {\small $A_4=\{g,h,i\}$};
  \node[inner sep = 0mm] (u3') at (1.55,5) {};
  \node[inner sep = 0mm] (u4) at (1.5,6.25) {};
  \node at (1.5,6.5) {\small $A_2=\{a,b,c,d,e,f\}$};
\foreach \from/\to in {u1/u2,u1/u1',u3'/u3,u1/u4,u2/u3,u2/u4,u3/u4}
\draw(\from)--(\to);

\filldraw (0.4,5.33) node[above left] {$a$} circle  (2pt);
\filldraw (0.75,5.625) node[above left] {$b$} circle  (2pt);
\filldraw (1.1,5.916) node[above left] {$c$} circle  (2pt);
\filldraw (1.5,5.7) node[right] {$d$} circle  (2pt);
\filldraw (1.5,5.25) node[right] {$e$} circle  (2pt);
\filldraw (1.5,4.75) node[right] {$f$} circle  (2pt);
\filldraw (1.9,4.413) node[below right] {$g$} circle  (2pt);
\filldraw (2.25,4.6) node[below right] {$h$} circle  (2pt);
\filldraw (2.6,4.786) node[below right] {$i$} circle  (2pt);

  \node[inner sep = 0.3mm] (em) at (7.5,4) {\small $\emptyset$};
  \node[inner sep = 0.3mm] (t2) at (6.75,4.75) {\small $\{2\}$};
  \node[inner sep = 0.3mm] (t3) at (8.25,4.75) {\small $\{3\}$};
  \node[inner sep = 0.3mm] (t12) at (6,5.5) {\small $\{1,2\}$};
  \node[inner sep = 0.3mm] (t23) at (7.5,5.5) {\small $\{2,3\}$};
  \node[inner sep = 0.3mm] (t34) at (9,5.5) {\small $\{3,4\}$};
  \node[inner sep = 0.3mm] (t123) at (6.75,6.25) {\small $\{1,2,3\}$};
  \node[inner sep = 0.3mm] (t234) at (8.25,6.25) {\small $\{2,3,4\}$};
  \node[inner sep = 0.3mm] (t1234) at (7.5,7) {\small $\{1,2,3,4\}$};
\foreach \from/\to in
{em/t2,em/t3,t2/t23,t2/t12,t3/t23,t3/t34,t12/t123,
t23/t123,t23/t234,t34/t234,t123/t1234,t234/t1234}
\draw(\from)--(\to);
\end{tikzpicture}
\caption{Two presentations $\mcA$ of a transversal matroid $M$, along
  with the associated lattices $L_\mcA$.}
\label{fig:example}
\end{figure}
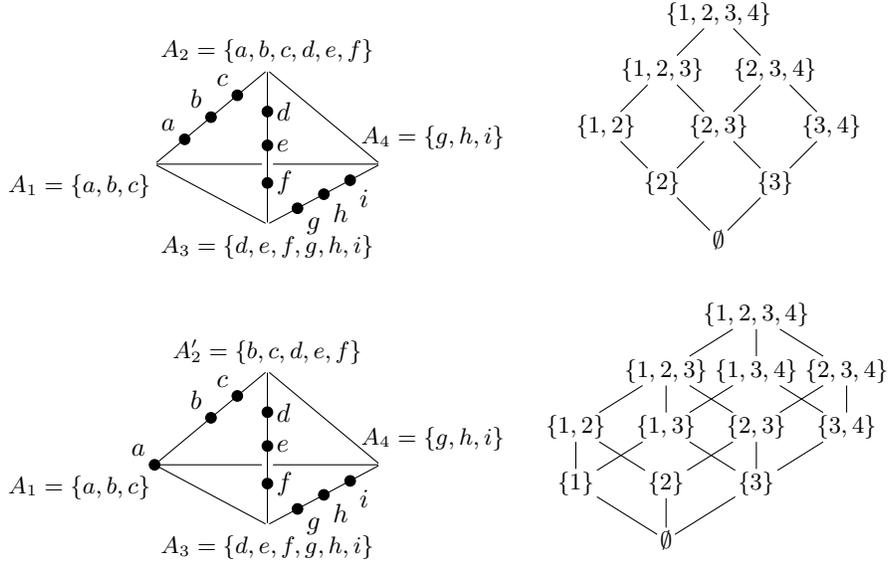

\begin{thm}\label{thm:distriblattice}
  For any presentation $\mcA=(A_i\,:\,i\in[r])$ of a transversal
  matroid $M$, the map $\sigma_\mcA$ defined above is a closure
  operator on $[r]$.  The join in the lattice $L_\mcA$ of
  $\sigma_\mcA$-closed sets is given by $I\join J = I\cup J$, so
  $L_\mcA$ is distributive.  Both $\emptyset$ and $[r]$ are in
  $L_\mcA$.
\end{thm}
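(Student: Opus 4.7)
The plan is to verify the three closure axioms for $\sigma_\mcA$, to show that joins in $L_\mcA$ are unions (so that $L_\mcA$ is a sublattice of the distributive lattice $2^{[r]}$, hence distributive), and then to observe that $\emptyset$ and $[r]$ are closed. The workhorse throughout is Lemma \ref{lem:max}, together with the characterization recorded just before the theorem: $\sigma_\mcA(I)$ is the greatest $K\supseteq I$ with $M[\mcA^I]=M[\mcA^K]$. If that characterization needs support, Lemma \ref{lem:max} applied to each added index shows $M[\mcA^{\sigma_\mcA(I)}]=M[\mcA^I]$, while for maximality: given any $K\supseteq I$ with $M[\mcA^K]=M[\mcA^I]$ and any $k\in K-I$, the weak-order sandwich $\mcA^I\preceq \mcA^{K-\{k\}}\preceq \mcA^K$ and Lemma \ref{lem:inclusinggivesweak} force $M[\mcA^{K-\{k\}}]=M[\mcA^I]$, and then passing back to $\mcA^K$ via Lemma \ref{lem:max} puts $k\in \sigma_\mcA(I)$.

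Extensivity holds by definition and idempotence is immediate from the greatest-$K$ description. For monotonicity, given $I\subseteq J$ and $K=\sigma_\mcA(I)$, I will show $M[\mcA^{K\cup J}]=M[\mcA^J]$, so that maximality gives $K\subseteq K\cup J\subseteq \sigma_\mcA(J)$. One direction, $M[\mcA^J]\leq_w M[\mcA^{K\cup J}]$, is Lemma \ref{lem:inclusinggivesweak}; for the reverse, any partial transversal $S$ of $\mcA^{K\cup J}$ containing $x$ carries a matching $\phi$ with $\phi(x)\in K\cup J$, and depending on whether $\phi(x)\in J$ or $\phi(x)\in K-J$, the same $\phi$ is valid as a matching using $\mcA^J$ or using $\mcA^K$, showing $S$ independent in $M[\mcA^J]$ directly, or in $M[\mcA^K]=M[\mcA^I]\leq_w M[\mcA^J]$.

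For the join, it suffices to show that $I,J\in L_\mcA$ implies $I\cup J\in L_\mcA$, since intersection-closure is automatic for any closure operator; that makes $L_\mcA$ a sublattice of the distributive lattice $2^{[r]}$. Suppose $k\in \sigma_\mcA(I\cup J)-(I\cup J)$. Then $x$ is a coloop of $M[\mcA^{I\cup J}]\del A_k$, so there is an independent set $S$ of this deletion containing $x$ of size $r(M\del A_k)+1$, with a matching $\phi$ sending $x$ to some $\ell\in I\cup J$. By symmetry, assume $\ell\in I$. For $e\in S-\{x\}$, the containment $e\in A^{I\cup J}_{\phi(e)}-A_k$ implies $e\in A_{\phi(e)}-A_k\subseteq A^I_{\phi(e)}-A_k$, and $x\in A^I_\ell-A_k$; thus $\phi$ is a matching using $\mcA^I$, so $S$ is independent in $M[\mcA^I]\del A_k$, forcing $x$ to be a coloop there. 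Hence $k\in \sigma_\mcA(I)=I$, contradicting $k\notin I\cup J$. Finally, $[r]\in L_\mcA$ by extensivity, and $\emptyset\in L_\mcA$ because $x$ is a loop of $M[\mcA^\emptyset]$ and a loop is never a coloop of any deletion.

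The principal obstacle I anticipate is the matching-transfer step used both in monotonicity and in the join argument: the key point is that a matching tailored to the richer system $\mcA^{I\cup J}$ transfers to $\mcA^I$ whenever $\phi(x)\in I$, since no element of $S$ other than $x$ itself actually needs the extra copies of $x$ indexed by $J-I$. Once this is set up carefully—with attention to the effect of deleting $A_k$ and to which index $\phi(x)$ targets—the remainder is bookkeeping.
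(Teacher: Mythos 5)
Your proof is correct and follows essentially the same route as the paper: the crux, closure of $L_\mcA$ under unions, is exactly the paper's matching-transfer argument (a matching of a spanning independent set of $M[\mcA^{I\cup J}]\del A_k$ containing $x$ must send $x$ into $I$ or into $J$, contradicting the closedness of that set), and the treatment of $\emptyset$, $[r]$, and distributivity via being a sublattice of $2^{[r]}$ is the same. The only divergence is minor: for monotonicity you route through the ``greatest $K$ with $M[\mcA^K]=M[\mcA^I]$'' characterization (which you justify correctly from Lemma~\ref{lem:max}) plus a second matching transfer, whereas the paper gets it in one line by applying Lemmas~\ref{lem:inclusinggivesweak} and~\ref{lem:useweak} to $M[\mcA^I]\del A_h\leq_w M[\mcA^J]\del A_h$; both arguments are sound.
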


\begin{proof}
  Properties (1) and (3) of closure operators clearly hold.  For
  property (2), assume $I\subseteq J\subseteq [r]$ and $h\in
  \sigma_\mcA(I)-I$, so $x$ is a coloop of $M[\mcA^I]\del A_h$.  Lemma
  \ref{lem:inclusinggivesweak} gives $M[\mcA^I]\del A_h\leq_w
  M[\mcA^J]\del A_h$, so $x$ is a coloop of $M[\mcA^J]\del A_h$ by
  Lemma \ref{lem:useweak}, so $h\in \sigma(J)$, as needed.

  Let $I$ and $J$ be in $L_\mcA$.  Their meet, $I\meet J$, is $I\cap
  J$ since, as noted above, this holds for any closure operator.  We
  claim that $I\join J = I\cup J$.  (The fact that $L_\mcA$ is
  distributive then follows since union and intersection distribute
  over each other.)  Since $I$ and $J$ are in $L_\mcA$,
  \begin{enumerate}
  \item if $h\in [r]-I$, then $x$ is not a coloop of $M[\mcA^I]\del
    A_h$, and
  \item if $h\in [r]-J$, then $x$ is not a coloop of $M[\mcA^J]\del
    A_h$.
  \end{enumerate}
  Note that the following two statements are equivalent: (i) $I\join J
  = I\cup J$ and (ii) $I\cup J$ is $\sigma_\mcA$-closed.  To prove
  statement (ii), let $h$ be in $[r]-(I\cup J)$ and let $Z$ be a basis
  of $M\del A_h$.  If $x$ were a coloop of $M[\mcA^{I\cup J}]\del
  A_h$, then there would be an $\mcA^{I\cup J}$-matching $\phi:Z\cup
  \{x\}\to [r]$.  Either $\phi(x)\in I$ or $\phi(x)\in J$; if
  $\phi(x)\in I$, then $\phi$ shows that $Z\cup \{x\}$ is independent
  in $M[\mcA^I ]\del A_h$, contrary to item (1) above; similarly,
  $\phi(x)\in J$ contradicts item (2).  Thus, as needed, $x$ is not a
  coloop of $M[\mcA^{I\cup J}]\del A_h$.

  Note that $\emptyset$ is in $L_\mcA$ since $x$ is a loop of
  $M[\mcA^I]$ if and only if $I =\emptyset$.
\end{proof}

We now show how the order on presentations relates to the lattices of
closed sets.

\begin{thm}\label{thm:altorder}
  For two presentations $\mcA=(A_i\,:\,i\in[r])$ and $\mcB
  =(B_i\,:\,i\in[r])$ of $M$, if $\mcA\preceq \mcB$, then $L_\mcB$ is
  a sublattice of $L_\mcA$ and $M[\mcA^I]=M[\mcB^I]$ for all $I\in
  L_\mcB$.
\end{thm}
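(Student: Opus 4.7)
The plan is to reduce the claim to a single-element step in the presentation order, and then apply Lemma \ref{lem:max} twice: once to get $M[\mcA^I] = M[\mcB^I]$, and once to transfer $\sigma$-closedness from $L_\mcB$ to $L_\mcA$.

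\textbf{Reduction.} For any $\mcC$ with $\mcA \preceq \mcC \preceq \mcB$, Lemma \ref{lem:inclusinggivesweak} sandwiches $M = M[\mcA] \leq_w M[\mcC] \leq_w M[\mcB] = M$, so $\mcC$ is itself a presentation of $M$. Hence $\mcA \preceq \mcB$ factors through a chain of covers by presentations of $M$, each adjoining one element. By induction on $\sum_i(|B_i| - |A_i|)$, I may assume $B_k = A_k \cup \{e\}$ and $B_j = A_j$ for $j \neq k$; then Lemma \ref{lem:max} forces $e$ to be a coloop of $M \del A_k$.

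\textbf{Equality of extensions.} Since $\mcA^I$ and $\mcB^I$ differ only in slot $k$, Lemma \ref{lem:max} reduces $M[\mcA^I] = M[\mcB^I]$ to showing that $e$ is a coloop of $M[\mcA^I] \del A_k^I$. If $k \in I$, then $A_k^I = A_k \cup \{x\}$ and $M[\mcA^I] \del A_k^I = M \del A_k$, in which $e$ is already a coloop. If $k \notin I$, then $A_k^I = A_k$, and since $\mcA^I$ and $\mcB^I$ agree off slot $k$ while both slot-$k$ sets become empty after deleting $B_k$, Lemma \ref{lem:restr} gives $M[\mcA^I] \del A_k \del e = M[\mcA^I] \del B_k = M[\mcB^I] \del B_k$. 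By $I \in L_\mcB$ with $k \notin I$, $x$ is not a coloop of this matroid, so its rank equals $r(M \del B_k) = r(M \del A_k) - 1$. Combined with the obvious bounds $r(M \del A_k) \leq r(M[\mcA^I] \del A_k) \leq r(M[\mcA^I] \del A_k \del e) + 1$ (restriction on the left, one-element rank drop on the right), one is forced into $r(M[\mcA^I] \del A_k) = r(M[\mcA^I] \del A_k \del e) + 1$, so $e$ is a coloop. I expect this rank-squeeze to be the main technical point.

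\textbf{Transfer of closedness and sublattice conclusion.} With $M[\mcA^I] = M[\mcB^I]$ established, fix $h \in [r] - I$. If $h \neq k$, then $A_h = B_h$ gives $M[\mcA^I] \del A_h = M[\mcB^I] \del B_h$, and $I \in L_\mcB$ says $x$ is not a coloop. If $h = k$, then monotonicity $\cl_N(E(N) - e - x) \subseteq \cl_N(E(N) - x)$ implies that if $x$ is a coloop of a matroid $N$ then $x$ is also a coloop of $N \del e$. Its contrapositive, applied to $N = M[\mcB^I] \del A_k$ with $N \del e = M[\mcB^I] \del B_k$ (in which $x$ is not a coloop by $I \in L_\mcB$), shows that $x$ is not a coloop of $M[\mcA^I] \del A_k = M[\mcB^I] \del A_k$. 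Hence $I \in L_\mcA$, so $L_\mcB \subseteq L_\mcA$. Since meet in any lattice of closed sets is intersection, and Theorem \ref{thm:distriblattice} shows join in both $L_\mcA$ and $L_\mcB$ is union, the lattice operations of $L_\mcB$ agree with those inherited from $L_\mcA$, making $L_\mcB$ a sublattice of $L_\mcA$.
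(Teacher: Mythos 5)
Your proposal is correct, but it takes a genuinely different route from the paper. The paper works directly with an arbitrary pair $\mcA\preceq\mcB$: it first shows $I\in L_\mcA$ for each $I\in L_\mcB$ by comparing $M[\mcA^I]\del B_i$ with $M[\mcB^I]\del B_i$ under the weak order and invoking Lemma \ref{lem:useweak}, and only then proves $M[\mcA^I]=M[\mcB^I]$ by checking, for every $i$, that each element of $B_i-A_i$ is a coloop of the relevant deletion of $M[\mcA^I]$ (so that Lemma \ref{lem:max} lets one add all these elements and pass from $\mcA^I$ to $\mcB^I$); the delicate case $i\notin I$ is handled there by a strong circuit elimination argument. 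You instead reduce to a single-element cover step, using the antisymmetry of $\leq_w$ together with Lemma \ref{lem:inclusinggivesweak} to see that every intermediate set system presents $M$ (a legitimate reduction, since the two statements chain transitively), then prove the equality $M[\mcA^I]=M[\mcB^I]$ first --- in the nontrivial case $k\notin I$ by the rank squeeze $r(M\del A_k)\leq r(M[\mcA^I]\del A_k)\leq r(M[\mcA^I]\del A_k\del e)+1=r(M\del A_k)$, which replaces the paper's circuit-exchange argument --- and only afterwards transfer closedness from $L_\mcB$ to $L_\mcA$, which becomes easy once the two extensions coincide. Both arguments pivot on Lemma \ref{lem:max}; yours buys a more elementary, local verification at the price of an induction, while the paper's handles all of $B_i-A_i$ at once without decomposing $\mcA\preceq\mcB$. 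You also make explicit a point the paper leaves implicit: containment of the closed-set families suffices for the sublattice conclusion because, by Theorem \ref{thm:distriblattice}, join and meet are union and intersection in both $L_\mcA$ and $L_\mcB$.
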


\begin{proof}
  Fix $I$ in $L_\mcB$.  Set $M_\mcB =M[\mcB^I]$ and
  $M_\mcA=M[\mcA^I]$.  For $i\in [r]-I$, the element $x$ is not a
  coloop of $M_\mcB\del B_i$ since $I \in L_\mcB$.  Now $M_\mcA\del
  B_i\leq_w M_\mcB\del B_i$, so $x$ is not a coloop of $M_\mcA\del
  B_i$ by Lemma \ref{lem:useweak}, so $x$ is not a coloop of
  $M_\mcA\del A_i$.  Thus, $I\in L_\mcA$, so $L_\mcB$ is a sublattice
  of $L_\mcA.$ Lemma \ref{lem:max} and the following two claims give
  $M_\mcA = M_\mcB$:
  \begin{enumerate}
  \item for each $i\in I$, each element of $(B_i\cup \{x\})-(A_i\cup
    \{x\})$ (that is, $B_i -A_i$) is a coloop of $M_\mcA\del (A_i\cup
    \{x\})$ (that is, $M\del A_i$), and
  \item for each $i\in [r]-I$, each element of $B_i -A_i$ is a coloop
    of $M_\mcA\del A_i$.
  \end{enumerate}
  By the hypothesis and Lemma \ref{lem:max}, for all $i\in [r]$, each
  element of $B_i-A_i$ is a coloop of $M\del A_i$, so claim (1) holds.
  For claim (2), fix $i \in [r]-I$ and $y \in B_i-A_i$.  As shown
  above, $x$ is not a coloop of $M_\mcA\del B_i$; let $C$ be a circuit
  of $M_\mcA\del B_i$ with $x\in C$.  Thus, $y\not\in C$.  Assume,
  contrary to claim (2), that some circuit $C'$ of $M_\mcA\del A_i$
  contains $y$.  Now $x\in C'$ since $y$ is coloop of $M\del A_i$.  By
  strong circuit elimination, applied in $M_\mcA\del A_i$, some
  circuit $C''\subseteq (C\cup C')-\{x\}$ contains $y$; however $C''$
  is a circuit of $M\del A_i$, which contradicts $y$ being a coloop of
  $M\del A_i$.  Thus, claim (2) holds.
\end{proof}

The corollary below is a theorem from \cite{extpres}.

\begin{cor}\label{cor:recoverminresult}
  For each transversal extension $M'$ of $M$, there is a minimal
  presentation of $M$ that can be extended to a presentation of $M'$.
\end{cor}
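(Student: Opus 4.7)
The plan is to combine Theorem \ref{thm:altorder} with the observation that every presentation of $M$ sits above a minimal one in the order $\preceq$.

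First I would start with an arbitrary transversal extension $M'$ of $M$. Since $M'$ is a rank-preserving single-element transversal extension, Lemma \ref{lem:r} gives $M'$ a presentation with exactly $r(M')=r(M)=r$ sets; restricting these sets to $E(M)$ (Lemma \ref{lem:restr}) yields a presentation $\mcB=(B_i\,:\,i\in[r])$ of $M$ together with a subset $I\subseteq[r]$ recording the sets that contain $x$, so that $M'=M[\mcB^I]$.

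Next I would pass to the $\sigma_\mcB$-closure of $I$: since $\sigma_\mcB(I)$ is the largest subset $K$ of $[r]$ with $M[\mcB^K]=M[\mcB^I]$, we may assume without loss of generality that $I\in L_\mcB$. Then I would choose a minimal presentation $\mcA$ of $M$ with $\mcA\preceq\mcB$; such a presentation exists because the poset of presentations of $M$ under $\preceq$ is finite, so any descending chain starting at $\mcB$ terminates at a minimal element.

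Finally, Theorem \ref{thm:altorder} applied to $\mcA\preceq\mcB$ gives $L_\mcB\subseteq L_\mcA$, so $I\in L_\mcA$, and the same theorem yields $M[\mcA^I]=M[\mcB^I]=M'$. Thus $\mcA^I$ is a presentation of $M'$ extending the minimal presentation $\mcA$ of $M$, as required.

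There is no real obstacle here beyond setting up the right notation: the one mild technicality is verifying that every transversal extension actually arises as $M[\mcB^I]$ for some presentation $\mcB$ of $M$ and some $I\subseteq[r]$, which is immediate from Lemmas \ref{lem:r} and \ref{lem:restr}. The substantive work has already been done in Theorem \ref{thm:altorder}, which explicitly tells us that descending in $\preceq$ only enlarges the lattice of closed sets and preserves the extension along any member of $L_\mcB$.
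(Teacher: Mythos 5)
Your proof is correct and follows exactly the route the paper intends: the corollary is stated as an immediate consequence of Theorem \ref{thm:altorder}, obtained by writing $M'=M[\mcB^I]$ for a presentation $\mcB$ of $M$, replacing $I$ by its $\sigma_\mcB$-closure so that $I\in L_\mcB$, and descending to a minimal presentation $\mcA\preceq\mcB$, which yields $M[\mcA^I]=M[\mcB^I]=M'$. Your write-up simply makes explicit the routine steps (Lemmas \ref{lem:r} and \ref{lem:restr}, finiteness of the order on presentations) that the paper leaves unstated.
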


\subsection{The lattice $T_\mcA$}\label{sec:T}

The lattice $T_\mcA$ consists of the set $\{M[\mcA^I]\,:\, I\in
L_{\mcA}\}$ of transversal extensions of $M$ that have presentations
that extend $\mcA$, which we order by the weak order.  The next result
relates  $T_\mcA$ and  $L_\mcA$.

\begin{thm}\label{thm:faithful2}
  Let $\mcA$ be a presentation of $M$.  For any sets $I$ and $J$ in
  $L_\mcA$, we have $M[\mcA^I]\leq_w M[\mcA^J]$ if and only if
  $I\subseteq J$.  Thus, the bijection $I\mapsto M[\mcA^I]$ from
  $L_\mcA$ onto $T_\mcA$ is a lattice isomorphism, so $T_\mcA$ is a
  distributive lattice.
\end{thm}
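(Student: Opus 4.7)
The plan is to establish the biconditional first, then derive the bijection and lattice isomorphism as consequences. The forward implication is essentially free: if $I\subseteq J$, then $A^I_i\subseteq A^J_i$ for every $i\in[r]$, so Lemma \ref{lem:inclusinggivesweak} gives $M[\mcA^I]\leq_w M[\mcA^J]$ immediately.

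The content lies in the reverse implication, and I would pivot on the following intermediate claim: \emph{for any subset $I$ of $[r]$ and any $k\in I$, the element $x$ is a coloop of $M[\mcA^I]\del A_k$}. To prove this I would apply Lemma \ref{lem:restr} to write down the presentation $(B_i\,:\,i\in[r])$ of $M[\mcA^I]\del A_k$ explicitly. The $k$-th set collapses to $B_k=\{x\}$, since $A_k$ is deleted while $x$ remains because $k\in I$; for $i\ne k$ we have $B_i=(A_i\setminus A_k)$ or $(A_i\setminus A_k)\cup\{x\}$. Given any basis $Y$ of $M\del A_k$, a matching of $Y$ into $[r]$ via $(A_i\setminus A_k\,:\,i\in[r])$ cannot use the slot $k$ (since $A_k\setminus A_k=\emptyset$), so extending the matching by $\phi(x)=k$ exhibits $Y\cup\{x\}$ as independent in $M[\mcA^I]\del A_k$. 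Hence $x$ lies in every basis and is a coloop, proving the claim.

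With the claim in hand the reverse implication is short. Assume $M[\mcA^I]\leq_w M[\mcA^J]$ and fix $k\in I$. Weak order is preserved by deletion of a common set, so $M[\mcA^I]\del A_k\leq_w M[\mcA^J]\del A_k$, and after further deletion of $x$ both sides equal $M\del A_k$. Since $x$ is a coloop of the smaller matroid by the claim, Lemma \ref{lem:useweak} transfers the coloop property: $x$ is also a coloop of $M[\mcA^J]\del A_k$. This forces $k\in\sigma_\mcA(J)$, and because $J$ is $\sigma_\mcA$-closed we conclude $k\in J$, establishing $I\subseteq J$.

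To finish, note that the map $I\mapsto M[\mcA^I]$ from $L_\mcA$ to $T_\mcA$ is surjective by the definition of $T_\mcA$ (any extension in $T_\mcA$ equals $M[\mcA^I]$ for some $I\subseteq[r]$, and $\sigma_\mcA(I)\in L_\mcA$ yields the same matroid). The biconditional makes it injective and an isomorphism of posets; combined with Theorem \ref{thm:distriblattice}, this forces $T_\mcA$ to be a distributive lattice isomorphic to $L_\mcA$. The main obstacle is the coloop claim above, since it is the only place where the presentation of the restriction must be unpacked and a matching must be built; every other step is an essentially mechanical invocation of a previously stated lemma.
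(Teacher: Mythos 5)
Your proposal is correct, and its key step goes by a different route than the paper's. For the nontrivial direction, the paper takes an independent set of $M[\mcA^{I\cup J}]$ containing $x$, observes that any $\mcA^{I\cup J}$-matching sends $x$ into $I$ or into $J$, deduces $M[\mcA^{I\cup J}]\leq_w M[\mcA^J]$, hence $M[\mcA^{I\cup J}]=M[\mcA^J]$ by Lemma \ref{lem:inclusinggivesweak}, and then concludes $J=I\cup J$ because both $J$ and $I\cup J$ are $\sigma_\mcA$-closed (this last point leans on Theorem \ref{thm:distriblattice}, i.e.\ on $I\join J=I\cup J$). You instead prove the pointwise claim that for any $I\subseteq[r]$ and $k\in I$, the element $x$ is a coloop of $M[\mcA^I]\del A_k$ (your matching argument for this is sound: in the presentation of the deletion the $k$-th set is $\{x\}$, and a basis of $M\del A_k$ matches while avoiding slot $k$), and then transfer the coloop to $M[\mcA^J]\del A_k$ via Lemma \ref{lem:useweak}, so $\sigma_\mcA$-closedness of $J$ forces $k\in J$. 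Your argument only uses that $J$ is closed (not $I$, and not the union-closedness of $L_\mcA$), so it is slightly more local and in fact proves $\sigma_\mcA(I)\subseteq J$ for arbitrary $I$; it also recycles the coloop-transfer technique the paper uses elsewhere (in Theorems \ref{thm:distriblattice} and \ref{thm:altorder}) rather than the union/matching argument. The paper's version is marginally shorter given Theorem \ref{thm:distriblattice} is already in hand, but both are complete, and your concluding reduction of the bijection and lattice-isomorphism statements matches the paper's.
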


\begin{proof}
  Assume that $M[\mcA^I]\leq_w M[\mcA^J]$.  Any $\mcA^{I\cup
    J}$-matching $\phi$ of an independent set $X$ of $M[\mcA^{I\cup
    J}]$ with $x\in X$ has $\phi(x)$ in either $I$ or $J$, so $X$ is
  independent in one of $M[\mcA^I]$ and $M[\mcA^J]$, and so, by the
  assumption, in $M[\mcA^J]$.  Thus, $M[\mcA^{I\cup J}] \leq_w
  M[\mcA^J]$.  The equality $M[\mcA^J]= M[\mcA^{I\cup J}]$ now follows
  by Lemma \ref{lem:inclusinggivesweak}; thus, $J = I\cup J$ since $J$
  and $I\cup J$ are $\sigma_\mcA$-closed, so $I\subseteq J$.  The
  other implication follows from Lemma \ref{lem:inclusinggivesweak}.
\end{proof}

\begin{cor}
  For presentations $\mcA$ and $\mcB$ of $M$, if $\mcA\preceq \mcB$,
  then $T_\mcB$ is a sublattice of $T_\mcA$.
\end{cor}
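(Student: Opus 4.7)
The plan is to deduce the corollary directly from the two preceding theorems: Theorem~\ref{thm:altorder} and Theorem~\ref{thm:faithful2}. The former gives that $L_\mcB$ is a sublattice of $L_\mcA$ and that $M[\mcA^I]=M[\mcB^I]$ for every $I\in L_\mcB$; the latter gives that the maps $I\mapsto M[\mcA^I]$ and $I\mapsto M[\mcB^I]$ are lattice isomorphisms from $L_\mcA$ onto $T_\mcA$ and from $L_\mcB$ onto $T_\mcB$, respectively. Together, these two facts almost trivially imply what we want; the work is just to assemble them.

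First I would verify the set-theoretic containment $T_\mcB\subseteq T_\mcA$. If $N\in T_\mcB$, then $N=M[\mcB^I]$ for some $I\in L_\mcB$. By Theorem~\ref{thm:altorder}, $I$ also lies in $L_\mcA$ and $M[\mcA^I]=M[\mcB^I]=N$, so $N\in T_\mcA$.

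Next I would show that the lattice operations of $T_\mcB$ agree with those of $T_\mcA$ on $T_\mcB$. Pick $N_1,N_2\in T_\mcB$, and write $N_k=M[\mcB^{I_k}]=M[\mcA^{I_k}]$ with $I_k\in L_\mcB\subseteq L_\mcA$ for $k=1,2$. Since $L_\mcB$ is a sublattice of $L_\mcA$ and, by Theorem~\ref{thm:distriblattice}, join and meet in both lattices are union and intersection, the sets $I_1\cup I_2$ and $I_1\cap I_2$ lie in $L_\mcB$ and give the same join and meet in either lattice. Applying the isomorphism of Theorem~\ref{thm:faithful2} to $\mcB$, the join of $N_1$ and $N_2$ in $T_\mcB$ is $M[\mcB^{I_1\cup I_2}]$, which by Theorem~\ref{thm:altorder} equals $M[\mcA^{I_1\cup I_2}]$, and this is the join of $N_1$ and $N_2$ in $T_\mcA$ by the isomorphism applied to $\mcA$. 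The argument for meets is identical, with $\cup$ replaced by $\cap$.

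There is no real obstacle here: once one has Theorem~\ref{thm:altorder} and Theorem~\ref{thm:faithful2}, the corollary is essentially a transcription of the statement ``$L_\mcB$ is a sublattice of $L_\mcA$'' across the two isomorphisms $L_\mcA\cong T_\mcA$ and $L_\mcB\cong T_\mcB$, using the compatibility $M[\mcA^I]=M[\mcB^I]$ on $L_\mcB$ to identify the images.
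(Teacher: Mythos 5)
Your proof is correct and follows exactly the route the paper intends: the corollary is stated without proof as an immediate consequence of Theorem~\ref{thm:altorder} (that $L_\mcB$ is a sublattice of $L_\mcA$ with $M[\mcA^I]=M[\mcB^I]$ for $I\in L_\mcB$) and the isomorphisms of Theorem~\ref{thm:faithful2}, which is precisely what you assemble.
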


The converse of the corollary fails even under the more common order
on presentations as we now show.

\medskip

\noindent
\textsc{Example 1.}  Consider the uniform matroid $U_{3,4}$ on
$\{a,b,c,d\}$ and its presentations
$$\mcA = (\{a,b,d\},  \{a,c,d\},\{b,c,d\})\quad \text{ and } \quad
\mcB = (\{a,b,c\},\{a,b,d\}, \{a,c,d\}).$$ It is easy to check that
both $T_\mcA$ and $T_\mcB$ consist of just the extension by a loop,
$U_{3,4} \oplus U_{0,0}$, and the free extension, $U_{3,5}$.  Thus,
$T_\mcA=T_\mcB =T_\mcC$, where $\mcC$ is a maximal presentation of
$U_{3,4}$, that is, $\mcC=(\{a,b,c,d\},\{a,b,c,d\},\{a,b,c,d\})$.

\medskip

From the next result, which is a reformulation of \cite[Theorem
3.1]{extpres}, we see that we cannot recover the presentation $\mcA$
from $L_\mcA$.

\begin{thm}\label{thm:charmin}
  A presentation $\mcA=(A_i:i\in[r])$ of a transversal matroid $M$ is
  minimal if and only if $L_\mcA = 2^{[r]}$, that is, $|T_\mcA|=2^r$.
\end{thm}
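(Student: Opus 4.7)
The plan is to unpack the definition of $\sigma_\mcA$ into a condition about when $x$ becomes a coloop after deleting some $A_k$, and then verify this condition by a direct matching argument in each direction. By Theorem~\ref{thm:faithful2}, $|T_\mcA|=2^r$ iff $|L_\mcA|=2^r$, and since $L_\mcA\subseteq 2^{[r]}$, this occurs iff $L_\mcA=2^{[r]}$. Unwinding the definition of $\sigma_\mcA$, the latter is equivalent to: for every $I\subseteq [r]$ and every $k\in [r]-I$, the element $x$ is \emph{not} a coloop of $M[\mcA^I]\del A_k$; equivalently, $r(M[\mcA^I]\del A_k)=r(M\del A_k)$.

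For the forward direction, I would assume $\mcA=(C_i\,:\,i\in[r])$ is minimal, so by Lemma~\ref{lem:min}, $r(M\del C_k)=r-1$ for every $k$. Fix $I\subseteq [r]$ and $k\in [r]-I$, and let $X$ be an independent set of $M[\mcA^I]\del C_k$. If $x\notin X$, then $X$ is independent in $M\del C_k$, so $|X|\leq r-1$. If $x\in X$, take any $\mcA^I$-matching $\phi$ of $X$; each $e\in X-\{x\}$ lies in $E(M)-C_k$, so $e\notin C_k$ and $\phi(e)\neq k$, while $x\in C^I_{\phi(x)}$ forces $\phi(x)\in I\subseteq [r]-\{k\}$. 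Hence $\phi$ injects $X$ into $[r]-\{k\}$, so $|X|\leq r-1$. Either way, $r(M[\mcA^I]\del C_k)\leq r-1=r(M\del C_k)$, so $x$ is not a coloop, and every $I$ is $\sigma_\mcA$-closed.

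For the reverse direction, I would argue the contrapositive: if $\mcA$ is not minimal, some $I$ fails to be $\sigma_\mcA$-closed. A basis of $M$ must match onto all of $[r]$ and so meets each $A_i$, giving $r(M\del A_i)\leq r-1$ for every $i$; combined with Lemma~\ref{lem:min}, non-minimality provides an index $k$ with $r(M\del A_k)<r-1$. Set $I=[r]-\{k\}$ and choose a basis $Z$ of $M\del A_k$, so $|Z|<r-1$. Any $\mcA$-matching $\phi$ of $Z$ sends $Z$ into $[r]-\{k\}$ using fewer than $r-1$ indices, so there is some $j\in I-\phi(Z)$; extending by $\phi(x)=j$ witnesses that $Z\cup\{x\}$ is independent in $M[\mcA^I]\del A_k$. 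Thus $r(M[\mcA^I]\del A_k)>r(M\del A_k)$, meaning $x$ is a coloop and $k\in\sigma_\mcA(I)-I$, so $I\notin L_\mcA$.

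The main (mild) obstacle is using $k\notin I$ in the right place in each direction. In the forward direction it forces $\phi(x)\in I\subseteq [r]-\{k\}$, closing off the one slot that could otherwise accommodate $x$; in the reverse direction, $|Z|<r-1=|[r]-\{k\}|$ leaves an index in $I$ free for $x$. Everything else is a direct unpacking of the definitions of $\sigma_\mcA$ and of minimal presentations together with the rank characterization from Lemma~\ref{lem:min}.
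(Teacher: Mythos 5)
Your proposal is correct and takes essentially the same approach as the paper: both directions reduce, via the characterization of minimality in Lemma \ref{lem:min} (equivalently, $r(M\del A_i)=r-1$ for all $i$), to short matching arguments deciding whether $x$ is a coloop of $M[\mcA^I]\del A_k$. The only difference is cosmetic: the paper checks the non-coloop condition just for singletons $\{i\}$ and then invokes closure of $L_\mcA$ under unions from Theorem \ref{thm:distriblattice}, whereas you verify it directly for every $I$ and spell out the matching detail behind the coloop claim in the non-minimal direction, which the paper leaves implicit.
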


\begin{proof}
  If $\mcA$ is not minimal, then $r(M\del A_i)<r-1$ for some $i\in
  [r]$; thus, $x$ is a coloop of $M[\mcA^{[r]-\{i\}}]\del A_i$, so
  $[r]-\{i\}\not\in L_\mcA$.  If $\mcA$ is minimal, then $x$ is not a
  coloop of $M[\mcA^{\{i\}}]\del A_j$ for distinct $i,j\in [r]$ since
  $r(M\del A_j)=r-1$; thus, $\{i\}\in L_\mcA$, so closure under unions
  gives $L_\mcA = 2^{[r]}$.
\end{proof}

As Example 1 shows, we cannot always reconstruct the sets in $\mcA$
from $T_\mcA$; however, in some cases we can.  For the matroid in
Figure \ref{fig:example}, one can check that the sets in each of its
presentations $\mcA$ can be reconstructed from $T_\mcA$.  Also, as we
now show, for any transversal matroid $M$, the sets in each minimal
presentation $\mcA$ of $M$ can be reconstructed from $T_\mcA$.  By
Theorem \ref{thm:charmin}, from $T_\mcA$, we know whether $\mcA$ is
minimal.  If $\mcA$ is minimal, remove the free extension,
$M[\mcA^{[r]}]$, from $T_\mcA$; under the weak order, the maximal
extensions left are $M[\mcA^I]$ with $I=[r]-\{i\}$ for $i\in [r]$;
such an extension $M[\mcA^I]$ is, by Lemma \ref{lem:princip}, the
principal extension $M+_{_{H_i}}x$ of $M$, where $H_i$ is the
hyperplane of $M$ that is the complement, $E(M)-A_i$, of the cocircuit
$A_i$; also, $H_i\cup \{x\}$ is the unique cyclic hyperplane that
contains $x$; thus, we can reconstruct each set $A_i$ in $\mcA$.

\subsection{The sets in $L_\mcA$}
The results in this section, other than Corollary
\ref{cor:cyclicgivesclosed}, are used heavily in Section
\ref{sec:applic}.  We start with several characterizations of the sets
in $L_\mcA$. 

\begin{thm}\label{thm:chraracerizeclosedsets}
  For a presentation $\mcA$ of a transversal matroid $M$, the sets in
  $L_\mcA$ are
  \begin{enumerate}
  \item the sets $s_\mcA(X)$, where $X$ is an independent set of $M$
    and $|X|=|s_\mcA(X)|$, and
  \item all intersections of such sets.
  \end{enumerate}
  In particular, for $I\ \in L_\mcA$, if $\mcC_x$ is the set of all
  circuits of $M[\mcA^I]$ that contain $x$, then
  \begin{equation}\label{eq:cirsupint}
    I=\bigcap_{C\in\mcC_x}s_\mcA(C-\{x\}).
  \end{equation}
  Item \emph{(1)} could be replaced by: \emph{(1$'$)} \ the sets
  $s_\mcA(Y)$ where $r(Y)=|s_\mcA(Y)|$.
\end{thm}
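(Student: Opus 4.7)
The plan is to establish the theorem through three complementary claims that together yield the characterization (1)--(2) and the formula \eqref{eq:cirsupint}: \emph{(a)} every set of the form $s_\mcA(X)$ with $X$ independent in $M$ and $|X|=|s_\mcA(X)|$ lies in $L_\mcA$; \emph{(b)} for every $I\in L_\mcA$, formula \eqref{eq:cirsupint} holds and each factor $s_\mcA(C-\{x\})$ is itself of type~(1); \emph{(c)} the collection described by (1$'$) coincides with that described by (1). Since $L_\mcA$ is closed under binary intersection (meet in the lattice), (a) and (b) together give the full characterization.

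For (a), set $I=s_\mcA(X)$. Lemma \ref{lem:princip} identifies $M[\mcA^I]$ with the principal extension $M+_{_X}x$. For $h\in[r]-I$, the hypothesis $h\notin s_\mcA(X)$ forces $A_h\cap X=\emptyset$, and a routine rank-function check shows that principal extension commutes with deletion of a disjoint set, giving $M[\mcA^I]\del A_h=(M\del A_h)+_{_X}x$. In this matroid the rank of the whole ground set equals $r(M\del A_h)$, so deleting $x$ does not decrease the rank; hence $x$ is not a coloop, and $I$ is $\sigma_\mcA$-closed.

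For (b), fix $I\in L_\mcA$. To prove $I\subseteq s_\mcA(C-\{x\})$ for each $C\in\mcC_x$, apply part~(1) of Lemma \ref{lem:supp1} to the presentation $\mcA^I$ of $M[\mcA^I]$, obtaining $|s_{\mcA^I}(C)|=|s_{\mcA^I}(C-\{x\})|$. Because $s_{\mcA^I}(\{x\})=I$ and $s_{\mcA^I}(C-\{x\})=s_\mcA(C-\{x\})$ (since $x\notin C-\{x\}$), these cardinalities can match only if $I\subseteq s_\mcA(C-\{x\})$. Conversely, for $h\in[r]-I$ the defining property of $L_\mcA$ gives a circuit $C$ of $M[\mcA^I]\del A_h$ containing $x$; this $C$ is also in $\mcC_x$ and is disjoint from $A_h$, so $h\notin s_\mcA(C-\{x\})$. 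The same instance of Lemma \ref{lem:supp1}(1) gives $|s_\mcA(C-\{x\})|=|C|-1=|C-\{x\}|$, and $C-\{x\}$ is independent in $M$, so each factor in \eqref{eq:cirsupint} is of type~(1).

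Claim (c) is short: type-(1) sets are trivially of type-(1$'$), and for $Y$ with $r(Y)=|s_\mcA(Y)|$, one chooses a basis $X$ of $Y$ in $M$ and uses $|X|\leq|s_\mcA(X)|\leq|s_\mcA(Y)|=|X|$ (the first inequality by Hall's theorem) together with Lemma \ref{lem:supp1}(2) to conclude $s_\mcA(Y)=s_\mcA(X)$ with $|X|=|s_\mcA(X)|$. The main obstacle is step (a), which hinges on the commutation of principal extension with deletion of the disjoint set $A_h$ and the coloop consequence; once that is established, formula \eqref{eq:cirsupint} follows quickly from Lemma \ref{lem:supp1}(1) and the definition of $\sigma_\mcA$-closedness.
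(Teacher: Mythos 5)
Your proof is correct, and it follows the same overall decomposition as the paper (type-(1) sets are closed, hence so are their intersections; every closed set satisfies \eqref{eq:cirsupint}; (1) and (1$'$) describe the same family), but the two key steps are argued by different means. For your step (a) the paper avoids principal extensions altogether: with $I=s_\mcA(X)$ it simply observes that $X\cup\{x\}$ is dependent in $M[\mcA^I]$ but independent in $M[\mcA^{I\cup\{h\}}]$ for every $h\in[r]-I$, so these matroids differ and no $h$ can lie in $\sigma_\mcA(I)-I$; your route through Lemma \ref{lem:princip} and the identity $(M+_{_X}x)\del A_h=(M\del A_h)+_{_X}x$ (valid because $X\cap A_h=\emptyset$; the rank-function check is indeed routine, since $X\subseteq \cl_{M\del A_h}(Z)$ iff $X\subseteq\cl_M(Z)$ for $Z\subseteq E(M)-A_h$) is heavier but sound, and has the mild advantage of exhibiting $M[\mcA^I]$ concretely as a principal extension whose rank equals $r(M\del A_h)$. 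For the reverse inclusion your argument is in fact more direct than the paper's: from $\sigma_\mcA$-closedness you extract, for each $h\in[r]-I$, a circuit of $M[\mcA^I]\del A_h$ through $x$, which automatically avoids $A_h$ and so gives a factor of \eqref{eq:cirsupint} not containing $h$; the paper instead uses $M[\mcA^I]\lneq_w M[\mcA^{I\cup\{h\}}]$ to find a circuit of $M[\mcA^I]$ that becomes independent and excludes $h$ by a support-cardinality count. Both proofs invoke Lemma \ref{lem:supp1}(1) identically to see that each $C-\{x\}$ is independent with $|C-\{x\}|=|s_\mcA(C-\{x\})|$ and that $I\subseteq s_\mcA(C-\{x\})$, and your handling of (1$'$) matches the paper's; the citation of Lemma \ref{lem:supp1}(2) there is unnecessary, since your inequality chain together with $s_\mcA(X)\subseteq s_\mcA(Y)$ already yields $s_\mcA(X)=s_\mcA(Y)$.
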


\begin{proof}
  Set $r=r(M)$.  First assume that $X$ satisfies condition (1).  Set
  $I=s_\mcA(X)$.  Thus, $X\cup \{x\}$ is dependent in $M[\mcA^I]$ but
  independent in $M[\mcA^{I\cup \{h\}}]$ for any $h\in [r]-I$, so $I$ is
  in $L_\mcA$.  Since $L_\mcA$ is closed under intersection, all sets
  identified above are in $L_\mcA$.

  Fix $I$ in $L_\mcA$ and let $\mcC_x$ be as defined above.  Let $X$
  be $C-\{x\}$ for some $C\in \mcC_x$, so $X$ is independent in $M$.
  Now $s_\mcA(X)=s_{\mcA^I}(X)$, and Lemma \ref{lem:supp1} gives
  $|s_{\mcA^I}(X)|=|X|$, so $|X| =|s_\mcA(X)|$.  Also, $I
  =s_{\mcA^I}(x)\subseteq s_{\mcA^I}(C) =s_{\mcA}(X)$, so to prove
  equation (\ref{eq:cirsupint}) and show that all sets in $L_\mcA$ are
  given by items (1) and (2), it suffices to show that for each $h$ in
  $[r]-I$, there is some $C_h\in \mcC_x$ with $h\not\in
  s_{\mcA}(C_h-\{x\})$.  Now $M[\mcA^I]\lneq_w M[\mcA^{I\cup \{h\}}]$,
  so some circuit, say $C_h$, of $M[\mcA^I]$ is independent in
  $M[\mcA^{I\cup \{h\}}]$.  Thus, $C_h\in \mcC_x$
  and $$|s_{\mcA^{I\cup \{h\}}}(C_h)|\geq |C_h|>| s_{\mcA^I}(C_h) |,$$
  so $h\not\in s_{\mcA^I}(C_h)$, so $h\not\in s_\mcA(C_h-\{x\})$, as
  needed.

  Item (1$'$) can replace item (1) since, by Lemma \ref{lem:supp1},
  $r(Y)=|s_\mcA(Y)|$ for a set $Y$ if and only if $|X|=|s_\mcA(X)|$
  for some (equivalently, every) basis $X$ of $M|Y$.
\end{proof}

By Lemma \ref{lem:princip}, in terms of $T_\mcA$, the extension that
corresponds to a set $s_\mcA(X)$ in item (1) of Theorem
\ref{thm:chraracerizeclosedsets} is the principal extension, $M+_X e$.

\begin{cor}\label{cor:cyclicgivesclosed}
  Let $\mcA=(A_i\,:\,i\in [r])$ be a presentation of $M$.  If
  $F_1,F_2,\ldots,F_k$ are cyclic flats of $M$, then
  $\bigcap_{i=1}^ks_\mcA(F_i)\in L_\mcA$.  If $\mcA$ is a maximal
  presentation of $M$, then $L_\mcA$ consists of all such sets (which
  include $\emptyset$), along with $[r]$.
\end{cor}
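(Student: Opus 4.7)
My plan for the first assertion is direct. A cyclic flat $F$ is in particular cyclic, so Corollary \ref{cor:supp2} gives $|s_\mcA(F)| = r(F)$, and then clause (1$'$) of Theorem \ref{thm:chraracerizeclosedsets} puts $s_\mcA(F)$ in $L_\mcA$. Since meet in $L_\mcA$ is intersection (as for any closure operator), any finite intersection $\bigcap_{i=1}^k s_\mcA(F_i)$ of such sets stays in $L_\mcA$.

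For the converse, assume $\mcA$ is maximal. By Lemma \ref{lem:max} and the remark just after it, for every $h\in [r]$ the set $F_h:=E(M)-A_h$ is a cyclic flat of $M$. Given $I\in L_\mcA$ with $I\ne [r]$, I would prove
\[ I \;=\; \bigcap_{h\in [r]-I} s_\mcA(F_h). \]
The inclusion $\supseteq$ is immediate: for $j\in [r]-I$ we have $A_j\cap F_j=\emptyset$, so $j\notin s_\mcA(F_j)$, and hence $j$ is absent from the intersection. For $\subseteq$ I would use equation (\ref{eq:cirsupint}) from Theorem \ref{thm:chraracerizeclosedsets}: since $I$ is $\sigma_\mcA$-closed, each $h\in [r]-I$ supplies a circuit $C_h$ of $M[\mcA^I]$ with $x\in C_h$ and $C_h-\{x\}\subseteq F_h$, so $I\subseteq s_\mcA(C_h-\{x\})\subseteq s_\mcA(F_h)$. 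Specializing this formula to $I=\emptyset$ realizes $\emptyset$ as $\bigcap_{h\in [r]} s_\mcA(F_h)$, which accounts for the parenthetical remark; the set $[r]$ itself is trivially $\sigma_\mcA$-closed and is listed separately.

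I do not foresee a serious technical obstacle. The conceptual step is identifying the right family of cyclic flats: maximality of $\mcA$ is used only to guarantee that the complements of the $A_h$'s are cyclic flats of $M$, after which the index-by-index pairing of $[r]-I$ with these complements gives both inclusions cleanly from the general characterization of the elements of $L_\mcA$.
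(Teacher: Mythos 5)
Your proof is correct and takes essentially the paper's approach: the first assertion via condition (1$'$) of Theorem \ref{thm:chraracerizeclosedsets}, and, for maximal $\mcA$, the same pairing of each $h\in[r]-I$ with the cyclic flat $E(M)-A_h$ (noting $h\notin s_\mcA(E(M)-A_h)$) to write $I=\bigcap_{h\in[r]-I}s_\mcA(E(M)-A_h)$. The only, harmless, difference is that you get the containment $I\subseteq s_\mcA(E(M)-A_h)$ from equation (\ref{eq:cirsupint}) applied to an arbitrary $I\in L_\mcA$ via a circuit through $x$ avoiding $A_h$, whereas the paper first reduces to sets of the form $s_\mcA(X)$ with $X$ independent and $|X|=|s_\mcA(X)|$ and then uses the inclusion $X\subseteq E(M)-A_h$ directly.
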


\begin{proof}
  The first assertion follows from Theorem
  \ref{thm:chraracerizeclosedsets} since cyclic flats satisfy
  condition (1$'$).  Now let $\mcA$ be maximal.  By Theorem
  \ref{thm:chraracerizeclosedsets}, it suffices to show that if $X$ is
  an independent set of $M$ with $|X|=|s_\mcA(X)|$, then $s_\mcA(X)$
  is the intersection of the $\mcA$-supports of some set of cyclic
  flats.  Since $\mcA$ is maximal, each flat $E(M)-A_h$ of $M$, with
  $h\in[r]$, is cyclic by Lemma \ref{lem:max}.  If $h\in
  [r]-s_\mcA(X)$, then $X\subseteq E(M)-A_h$, so $s_\mcA(X)\subseteq
  s_\mcA\bigl(E(M)-A_h\bigr) $; also $h\not\in
  s_\mcA\bigl(E(M)-A_h\bigr)$.  Thus, as needed,
  \begin{equation*}
    s_\mcA(X) =\bigcap_{h\in[r]-s_\mcA(X)}s_\mcA\bigl(E(M)-A_h\bigr).\qedhere
  \end{equation*}   
\end{proof}

The next result identifies some closed sets in terms of known closed
sets and supports.

\begin{cor}\label{cor:closedclosetoclosedgen}
  Let $\mcA$ be a presentation of $M$.  Fix $F\subseteq E(M)$ and
  $J\in L_\mcA$, and set $H = s_\mcA(F)-J$.  If $|H| \leq |F|$ and
  $H\subseteq s_\mcA(e)$ for all $e\in F$, then $J\cup s_\mcA(F)\in
  L_\mcA$.  In particular, if $s_\mcA(e)-\{h\}\in L_\mcA$ for some
  $e\in E(M)$ and $h\in s_\mcA(e)$, then $s_\mcA(e)\in L_\mcA$.
\end{cor}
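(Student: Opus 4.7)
The plan is to verify directly that $J \cup s_\mcA(F)$ is $\sigma_\mcA$-closed. Let $K = J \cup s_\mcA(F)$; since $s_\mcA(F) = H \cup (s_\mcA(F) \cap J)$, we have $K = J \cup H$. To show $K \in L_\mcA$, I will prove that for every $h' \in [r] - K$, the extension $M[\mcA^K]$ has a circuit through $x$ that misses $A_{h'}$, which is exactly what it means for $x$ not to be a coloop of $M[\mcA^K] \del A_{h'}$.

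Since $J \in L_\mcA$ and $h' \notin J$, there is already a circuit $C$ of $M[\mcA^J]$ with $x \in C$ and $C \cap A_{h'} = \emptyset$. If $C$ remains dependent in $M[\mcA^K]$, then because $C - \{x\}$ stays independent (via Lemma~\ref{lem:inclusinggivesweak}), the set $C$ itself contains a circuit of $M[\mcA^K]$ through $x$, and we are done. The substantive case is when $C$ becomes independent in $M[\mcA^K]$. Fix an $\mcA^K$-matching $\phi$ of $C$; the fact that $C$ was a circuit of $M[\mcA^J]$ forces $\phi(x) = h^* \in H$. Comparing cardinalities using Lemma~\ref{lem:supp1}(1) pins down $\phi(C - \{x\}) = s_\mcA(C - \{x\})$, and in particular $h^* \notin s_\mcA(C - \{x\})$. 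Since $h^* \in H$ and $H \subseteq s_\mcA(e)$ for every $e \in F$, we conclude that $F \cap (C - \{x\}) = \emptyset$.

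The new circuit is built by augmenting $C - \{x\}$ with a carefully sized subset of $F$. Using $|H| \leq |F|$, pick $F_0 \subseteq F$ with $|F_0| = |H - s_\mcA(C - \{x\})|$. Combining $\phi|_{C-\{x\}}$ with any bijection $F_0 \to H - s_\mcA(C - \{x\})$, available because $H \subseteq s_\mcA(e)$ for each $e \in F_0$, yields an $\mcA$-matching showing that $Z_0 := (C - \{x\}) \cup F_0$ is independent in $M$. On the other hand, using $s_\mcA(F_0) \subseteq J \cup H$ together with $J \subseteq s_\mcA(C - \{x\})$, one computes
\[
  s_{\mcA^K}(Z_0 \cup \{x\}) = s_\mcA(C - \{x\}) \cup H,
\]
whose cardinality is $|Z_0 \cup \{x\}| - 1$, so Hall's theorem makes $Z_0 \cup \{x\}$ dependent in $M[\mcA^K]$. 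The resulting circuit $C^*$ inside $Z_0 \cup \{x\}$ necessarily contains $x$; since $h' \notin K \supseteq s_\mcA(F)$ (so $F_0 \cap A_{h'} = \emptyset$) and $C \cap A_{h'} = \emptyset$, the circuit $C^*$ misses $A_{h'}$. The ``in particular'' clause then follows by specializing to $F = \{e\}$, $J = s_\mcA(e) - \{h\}$, and $H = \{h\}$.

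The main obstacle is calibrating $|F_0|$ exactly: a smaller choice leaves Hall's inequality on $Z_0 \cup \{x\}$ too slack to force dependence, while the key preliminary step---deducing $F \cap (C - \{x\}) = \emptyset$ from the hypothesis $H \subseteq s_\mcA(e)$---is what ensures that the two partial matchings glue into an honest injection so that $Z_0$ is independent in the first place.
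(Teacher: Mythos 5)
Your proof is correct, and it takes a genuinely different route from the paper's. The paper derives the corollary from Theorem \ref{thm:chraracerizeclosedsets}: it writes $J$ as an intersection $\bigcap_{X\in\mcJ}s_\mcA(X)$ of supports of independent sets with $|X|=|s_\mcA(X)|$, augments each $X$ by $|s_\mcA(F)-s_\mcA(X)|$ elements of $F$ (matched into $s_\mcA(F)-s_\mcA(X)\subseteq H$, which is legal precisely because $H\subseteq s_\mcA(e)$ and $|H|\leq|F|$), and observes that the augmented sets again satisfy condition (1) and that their supports intersect to $J\cup s_\mcA(F)$. You instead verify closedness straight from the definition of $\sigma_\mcA$: for each $h'\notin K=J\cup s_\mcA(F)$ you start from a circuit of $M[\mcA^{J}]$ through $x$ avoiding $A_{h'}$, and either it stays dependent in $M[\mcA^{K}]$ or, using Lemma \ref{lem:supp1}(1) to see that a matching of $C$ sends $C-\{x\}$ onto $s_\mcA(C-\{x\})$ and $x$ to some $h^*\in H-s_\mcA(C-\{x\})$ (whence $F\cap(C-\{x\})=\emptyset$), you augment $C-\{x\}$ by $|H-s_\mcA(C-\{x\})|$ elements of $F$ and use Hall's theorem to force a new circuit through $x$ still avoiding $A_{h'}$. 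The core combinatorial move---adjoining elements of $F$ matched into the missing part of $H$---is the same in both arguments, but your version bypasses the characterization theorem entirely at the cost of a case analysis and an explicit Hall computation, while the paper's version is shorter because Theorem \ref{thm:chraracerizeclosedsets} absorbs all the circuit bookkeeping; your approach has the mild virtue of being self-contained relative to the definition of $L_\mcA$, and it also makes visible the circuit of $M[\mcA^{K}]$ witnessing non-coloopness, which the paper's proof never exhibits.
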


\begin{proof}
  Since $J\in L_\mcA$, there is a set $\mcJ$ of subsets $X$ of $E(M)$,
  all satisfying condition (1) of Theorem
  \ref{thm:chraracerizeclosedsets}, with $J=\bigcap_{X\in
    \mcJ}s_\mcA(X)$.  For each set $X\in\mcJ$, form a new set $X'$ by
  adjoining any $|s_\mcA(F)-s_\mcA(X)|$ elements of $F$ to $X$.  Note
  that $X'$ is independent: match elements in $X'-X$ to
  $s_\mcA(F)-s_\mcA(X)$.  Now $s_\mcA(X')=s_\mcA(X\cup F)$ and
  $$J\cup s_\mcA(F) = \bigcap_{X'\,:X \in
    \mcJ}s_\mcA(X').$$ Also, $|X'|=|s_\mcA(X')|$.  Thus, Theorem
  \ref{thm:chraracerizeclosedsets} gives $J\cup s_\mcA(F)\in L_\mcA$.
  
  For the last assertion, take $J = s_\mcA(e)-\{h\}$ and $F=\{e\}$.
\end{proof}

The next result gives conditions under which the support of a set is,
or is not, closed.

\begin{thm}\label{thm:supportsandclosedsets}
  Let $\mcA=(A_i\,:\,i\in[r])$ and $\mcB=(B_i\,:\,i\in[r])$ be
  presentations of $M$.
  \begin{enumerate}
  \item If the presentation $\mcA$ is maximal, then $s_\mcA(X)\in
    L_\mcA$ for all $X\subseteq E(M)$.
  \item Assume $\mcA \prec \mcB$.  For $X\subseteq E(M)$, if
    $s_\mcA(X) \ne s_\mcB(X)$, then $s_\mcA(X)\not\in L_\mcB$.
  \end{enumerate}
\end{thm}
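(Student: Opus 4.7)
\emph{Part (1).} For every $X \subseteq E(M)$, the plan is first to verify the identity
\[
s_\mcA(X) \;=\; \bigcap_{h \in [r] - s_\mcA(X)} s_\mcA\bigl(E(M) - A_h\bigr).
\]
For $\subseteq$: any witness $y \in X \cap A_k$ satisfies $y \notin A_h$ whenever $h \notin s_\mcA(X)$ (else $h \in s_\mcA(\{y\}) \subseteq s_\mcA(X)$), so $y \in (E(M) - A_h) \cap A_k$ gives $k \in s_\mcA(E(M) - A_h)$. For $\supseteq$: if $k \notin s_\mcA(X)$ then taking $h = k$ makes the $k$th factor miss $k$, since $k \in s_\mcA(E(M) - A_k)$ is impossible. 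When $\mcA$ is maximal, each $E(M) - A_h$ is a cyclic flat of $M$ by Lemma \ref{lem:max}, hence each $s_\mcA(E(M) - A_h)$ lies in $L_\mcA$ by Corollary \ref{cor:cyclicgivesclosed}; since $L_\mcA$ is closed under intersection, $s_\mcA(X) \in L_\mcA$.

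\emph{Part (2).} Set $I = s_\mcA(X)$. Since $\mcA \preceq \mcB$ forces $I \subseteq s_\mcB(X)$, the hypothesis upgrades this to strict containment; pick $i \in s_\mcB(X) - I$ and any $e \in X \cap B_i$, necessarily with $e \in B_i - A_i$. The plan is to exhibit $i \in \sigma_\mcB(I) - I$, i.e., to show that $x$ is a coloop of $M[\mcB^I] \del B_i$. The preparatory step is to observe that every $y \in B_i - A_i$ is a coloop of $M \del A_i$: the system $\mcA'$ obtained from $\mcA$ by replacing $A_i$ with $A_i \cup \{y\}$ satisfies $\mcA \preceq \mcA' \preceq \mcB$, so Lemma \ref{lem:inclusinggivesweak} sandwiches $M \leq_w M[\mcA'] \leq_w M$; hence $\mcA'$ is a presentation of $M$, and Lemma \ref{lem:max} yields the coloop property. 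In particular $B_i - A_i$ is contained in every basis of $M \del A_i$.

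Now the construction: let $Z$ be any basis of $M \del A_i$, so $Z \supseteq B_i - A_i \ni e$, and set $W := Z - (B_i - A_i)$; then $W$ is an independent subset of $E(M) - B_i$ of size $r(M \del A_i) - |B_i - A_i| = r(M \del B_i)$, hence a basis of $M \del B_i$. Choose an $\mcA$-matching $\phi \colon Z \to [r]$; then $\phi(Z) \subseteq [r] - \{i\}$, and since $e \in X$ we have $\phi(e) \in s_\mcA(\{e\}) \subseteq s_\mcA(X) = I$. Define $\psi \colon W \cup \{x\} \to [r]$ by $\psi|_W = \phi|_W$ and $\psi(x) = \phi(e)$. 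Injectivity is immediate from $e \notin W$; $\psi$ is a $\mcB^I$-matching because $A_j \subseteq B_j \subseteq B_j^I$ for every $j$ (covering $W$) and because $\phi(e) \in I$ forces $x \in B_{\phi(e)}^I$. Therefore $W \cup \{x\}$ is independent in $M[\mcB^I] \del B_i$, and since $|W \cup \{x\}| = r(M \del B_i) + 1 > r\bigl((M[\mcB^I] \del B_i) \del x\bigr)$, $x$ is a coloop of $M[\mcB^I] \del B_i$. Hence $i \in \sigma_\mcB(I) - I$, and $s_\mcA(X) = I \notin L_\mcB$.

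The main obstacle will be identifying the right substitution $W = Z - (B_i - A_i)$: the elements of $B_i - A_i$ must be dropped from the basis $Z$ of $M \del A_i$ so that $W$ avoids $B_i$, and the crucial payoff is that the matched index $\phi(e) \in I$ freed by discarding $e$ is exactly the label into which $x$ can be slotted in the new $\mcB^I$-matching.
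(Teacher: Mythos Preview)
Your argument is correct, but it follows a different route than the paper's.

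The paper unifies both parts with a single observation: if $I=s_\mcA(e)$, then swapping $e$ and $x$ is an automorphism of $M[\mcA^I]$, so questions about $x$ being a coloop in a deletion translate into questions about $e$ being a coloop in the corresponding deletion of $M$, and those are settled directly by Lemma~\ref{lem:max}. For part~(1) the paper first reduces to singleton $X$ via closure of $L_\mcA$ under unions, then uses the automorphism to turn ``$e$ is not a coloop of $M\del A_h$'' (which holds by maximality of~$\mcA$) into ``$x$ is not a coloop of $M[\mcA^I]\del A_h$''. For part~(2) the same automorphism turns ``$e$ is a coloop of $M\del A_h$'' (from $\mcA\prec\mcB$) into ``$x$ is a coloop of $M[\mcA^I]\del(A_h\cup\{e\})$'', and then a weak-order comparison pushes this up to $M[\mcB^J]\del B_h$.

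Your part~(1) instead recycles the intersection formula from the proof of Corollary~\ref{cor:cyclicgivesclosed}, observing that it holds for \emph{every} $X$, not just those with $|X|=|s_\mcA(X)|$; this is neat but essentially repackages what Corollary~\ref{cor:cyclicgivesclosed} already established. Your part~(2) is a direct matching construction: you strip the coloops $B_i-A_i$ from a basis of $M\del A_i$ to get a basis $W$ of $M\del B_i$, then reuse the index $\phi(e)\in I$ vacated by $e$ to match $x$. This is more explicit than the paper's automorphism-plus-weak-order argument and avoids Lemma~\ref{lem:useweak}, at the cost of more bookkeeping. The paper's approach is slicker and shows the two parts as two sides of the same coin; yours makes the mechanics visible and is self-contained.
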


\begin{proof}
  We start with an observation.  For an element $e\in E(M)$, set $I =
  s_\mcA(e)$. Since $e$ and $x$ are in the same sets in $\mcA^I$, the
  transposition $\phi$ on $E(M)\cup \{x\}$ that switches $e$ and $x$
  is an automorphism of $M[\mcA^I]$.  Thus, $\phi$ restricted to
  $E(M)$ is an isomorphism of $M$ onto $M[\mcA^I]\del e$.

  For part (1), since $L_\mcA$ is closed under unions, it suffices to
  treat a singleton set $\{e\}$.  Since $[r]\in L_\mcA$, we may assume
  that $s_\mcA(e) \ne [r]$.  Set $I = s_\mcA(e)$ and fix $h\in [r]-I$.
  By Lemma \ref{lem:max}, since $\mcA$ is maximal, $e$ is not a coloop
  of $M\del A_h$, so, by the isomorphism above, $x$ is not a coloop of
  $M[\mcA^I]\del(A_h\cup \{e\})$.  Thus, $x$ is not a coloop of
  $M[\mcA^I]\del A_h$, so $I\in L_\mcA$.

  For part (2), set $J = s_\mcA(X)$, fix $h \in s_\mcB(X)-J$, and pick
  $e\in X$ with $h \in s_\mcB(e)$.  Set $I = s_\mcA(e)$.  Since $\mcA
  \prec \mcB$, the element $e$ is a coloop of $M\del A_h$ by Lemma
  \ref{lem:max}.  By the isomorphism above, $x$ is a coloop of
  $M[\mcA^I]\del (A_h\cup \{e\})$, and thus of $M[\mcB^J]\del
  (A_h\cup\{e\})$ by Lemma \ref{lem:useweak}, and thus of
  $M[\mcB^J]\del B_h$.  Thus, $J\not\in L_\mcB$.
\end{proof}

Let $\mcA=(A_i\,:\,i\in[r])$ be a maximal presentation of $M$.  Thus,
$s_\mcA(e)\in L_\mcA$ for all $e\in E(M)$ by Theorem
\ref{thm:supportsandclosedsets}.  The unions of the sets $s_\mcA(e)$
include the supports of all cyclic flats, but intersections of
supports of cyclic flats, which are in $L_\mcA$, need not be
intersections of the sets $s_\mcA(e)$, as the example in Figure
\ref{fig:example2} shows.  Each presentation $\mcA$ of $M$ is both
maximal and minimal, so $L_\mcA = 2^{[4]}$.  However, $\{2,3\}$ is not
an intersection of the $\mcA$-supports of singletons.  Thus, the sets
$s_\mcA(e)$ generate $L_\mcA$, but both their unions and the
intersections of such unions are needed to obtain all of $L_\mcA$.

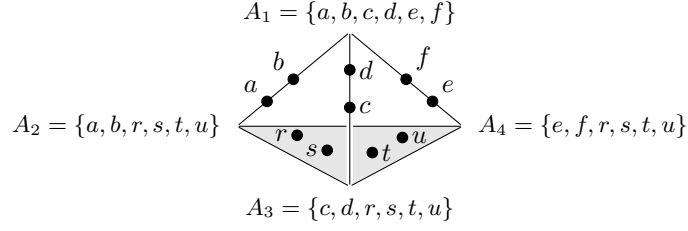
\begin{figure}
\centering
\begin{tikzpicture}[scale=1]
\draw [black!10, fill=black!10] (0.01,0.99) -- (1.45,0.99) --
(1.45,0.25) -- (0.01,0.99);
\draw [black!10, fill=black!10] (2.99,0.99) -- (1.55,0.99) --
(1.55,0.25) -- (2.99,0.99);

  \node[inner sep = 0mm] (1) at (0,1) {};
  \node at (-1.6,1) {\small $A_2=\{a,b,r,s,t,u\}$};
  \node[inner sep = 0mm] (1') at (1.48,1) {};
  \node[inner sep = 0mm] (2) at (1.5,0.2) {};
  \node at (1.5,-0.1) {\small $A_3=\{c,d,r,s,t,u\}$};
  \node[inner sep = 0mm] (3) at (3,1) {};
  \node at (4.6,1) {\small $A_4=\{e,f,r,s,t,u\}$};
  \node[inner sep = 0mm] (3') at (1.52,1) {};
  \node[inner sep = 0mm] (4) at (1.5,2.25) {};
  \node at (1.5,2.5) {\small $A_1=\{a,b,c,d,e,f\}$};
\foreach \from/\to in {1/2,1/1',3'/3,1/4,2/3,2/4,3/4}
\draw(\from)--(\to);

\filldraw (0.4,1.33) node[above left] {$a$} circle  (2pt);
\filldraw (0.75,1.625) node[above left] {$b$} circle  (2pt);
\filldraw (1.5,1.75) node[right] {$d$} circle  (2pt);
\filldraw (1.5,1.25) node[right] {$c$} circle  (2pt);
\filldraw (2.6,1.33) node[above right] {$e$} circle  (2pt);
\filldraw (2.25,1.625) node[above right] {$f$} circle  (2pt);
\filldraw (0.8,0.88) node[left] {$r$} circle  (2pt);
\filldraw (1.2,0.68) node[left] {$s$} circle  (2pt);
\filldraw (1.8,0.65) node[right] {$t$} circle  (2pt);
\filldraw (2.2,0.85) node[right] {$u$} circle  (2pt);
\end{tikzpicture}
\caption{A transversal matroid whose minimal presentations are also
  maximal.  The points $r,s,t,u$ are freely in the shaded plane.}
\label{fig:example2}
\end{figure}

\begin{cor}\label{cor:smallerlattice}
  Let $\mcA$ and $\mcB$ be presentations of $M$ with $\mcA\prec \mcB$.
  The sublattice $L_\mcB$ of $L_\mcA$ is a proper sublattice of
  $L_\mcA$ if either of the conditions below holds.
  \begin{enumerate}
  \item There is an $e\in E(M)$ and $h\in s_\mcA(e)$ with
    $s_\mcA(e)-\{h\}\in L_\mcB$ and $s_\mcA(e) \ne s_\mcB(e)$.
  \item For each $I\in 2^{[r]}-L_\mcB$, there is some $h\in I$ with
    $I-\{h\}\in L_\mcB$.
  \end{enumerate}
\end{cor}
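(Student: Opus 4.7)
The plan is straightforward: by Theorem \ref{thm:altorder}, $L_\mcB$ is already a sublattice of $L_\mcA$, so we only need to produce some $I\in L_\mcA\setminus L_\mcB$ under each of the two hypotheses. In both cases, $I$ will be of the form $s_\mcA(e)$ for a carefully chosen element $e\in E(M)$; membership in $L_\mcA$ will follow from the last assertion of Corollary \ref{cor:closedclosetoclosedgen}, and non-membership in $L_\mcB$ from Theorem \ref{thm:supportsandclosedsets}(2).

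Under condition (1), the element $e\in E(M)$ and index $h\in s_\mcA(e)$ are given, with $s_\mcA(e)-\{h\}\in L_\mcB$ and $s_\mcA(e)\neq s_\mcB(e)$. Since $L_\mcB\subseteq L_\mcA$ by Theorem \ref{thm:altorder}, we have $s_\mcA(e)-\{h\}\in L_\mcA$, so the last sentence of Corollary \ref{cor:closedclosetoclosedgen} yields $s_\mcA(e)\in L_\mcA$. On the other hand, applying Theorem \ref{thm:supportsandclosedsets}(2) with $X=\{e\}$ (this is where $\mcA\prec\mcB$ and $s_\mcA(e)\neq s_\mcB(e)$ are used) gives $s_\mcA(e)\notin L_\mcB$, so $s_\mcA(e)\in L_\mcA\setminus L_\mcB$.

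Under condition (2), I first have to manufacture an element $e$ whose $\mcA$- and $\mcB$-supports differ. Since $\mcA\prec\mcB$, there is an index $i\in[r]$ with $A_i\subsetneq B_i$; pick any $e\in B_i-A_i$, and then $i\in s_\mcB(e)-s_\mcA(e)$, so $s_\mcA(e)\neq s_\mcB(e)$. By Theorem \ref{thm:supportsandclosedsets}(2), $s_\mcA(e)\in 2^{[r]}-L_\mcB$. The hypothesis of condition (2) then hands us an $h\in s_\mcA(e)$ with $s_\mcA(e)-\{h\}\in L_\mcB$, which puts us in the setting of condition (1); the argument above then produces $s_\mcA(e)\in L_\mcA\setminus L_\mcB$.

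There is no real obstacle: the corollary is essentially bookkeeping on top of Theorem \ref{thm:supportsandclosedsets} and Corollary \ref{cor:closedclosetoclosedgen}. The only point worth flagging in the write-up is the reduction of case (2) to case (1), and in particular the explicit choice of $e\in B_i-A_i$ that guarantees differing supports.
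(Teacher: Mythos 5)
Your proposal is correct and follows essentially the same route as the paper: for condition (1), combine the last assertion of Corollary \ref{cor:closedclosetoclosedgen} (using $L_\mcB\subseteq L_\mcA$ from Theorem \ref{thm:altorder}) with Theorem \ref{thm:supportsandclosedsets}(2) to get $s_\mcA(e)\in L_\mcA-L_\mcB$, and reduce condition (2) to condition (1) by choosing $e$ with $s_\mcA(e)\neq s_\mcB(e)$. Your write-up merely makes explicit the choice $e\in B_i-A_i$ and the intermediate step $s_\mcA(e)\notin L_\mcB$, which the paper leaves implicit.
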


\begin{proof}
  Condition (1), Corollary \ref{cor:closedclosetoclosedgen}, and
  Theorem \ref{thm:supportsandclosedsets} give $s_\mcA(e)\in
  L_\mcA-L_\mcB$.  For the second condition, since $\mcA\prec\mcB$,
  there is an $e\in E(M)$ with $s_\mcA(e) \ne s_\mcB(e)$, so condition
  (1) applies.
\end{proof}

\subsection{The intersection of $T_\mcA$ and $T_\mcB$}

We show that, for presentations $\mcA$ and $\mcB$ of a transversal
matroid $M$, the intersection $T_\mcA\cap T_\mcB$ is a sublattice of
$T_\mcA$ and of $T_\mcB$, so for pairs of extensions that are in both
of these lattices, their meet in $T_\mcA$ is their meet in $T_\mcB$,
and likewise for joins.  This line of inquiry is motivated in part by
the following question \cite[Problem 4.1]{extpres}: is the set of all
rank-preserving single-element transversal extensions of a transversal
matroid, ordered by the weak order, a lattice?  An affirmative answer
would provide a transversal counterpart of the following well-known
result of Crapo \cite{crapo}: the set of all single-element extensions
of a matroid $M$, ordered by the weak order, is a lattice.  (This
lattice is called the lattice of extensions of $M$.)  While it is far
from addressing the question about the transversal extensions of a
transversal matroid $M$, the next result, from \cite{extpres}, shows
that the join in $T_\mcA$ is the join in the lattice of extensions of
$M$.

\begin{lemma}\label{lem:easyjoin}
  Let $\mcA$ be a presentation of $M$, and $r=r(M)$.  For any subsets
  $I$ and $J$ of $[r]$, the join of $M[\mcA^I]$ and $M[\mcA^J]$ in the
  lattice of extensions of $M$ is transversal and is $M[\mcA^{I\cup
    J}]$.
\end{lemma}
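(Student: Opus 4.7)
The plan is to exhibit $M[\mcA^{I\cup J}]$ as an upper bound of $M[\mcA^I]$ and $M[\mcA^J]$ in the weak order, and then verify that it is the \emph{least} such upper bound among all single-element extensions of $M$, transversal or not. Since $M[\mcA^{I\cup J}]$ is transversal by construction, this will also yield the claim that the join is transversal.

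For the upper bound, note that $A_i^I\subseteq A_i^{I\cup J}$ and $A_i^J\subseteq A_i^{I\cup J}$ for every $i\in[r]$, so Lemma \ref{lem:inclusinggivesweak} immediately gives $M[\mcA^I]\leq_w M[\mcA^{I\cup J}]$ and $M[\mcA^J]\leq_w M[\mcA^{I\cup J}]$.

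For the least-upper-bound property, let $N$ be any single-element extension of $M$ (by $x$) with $M[\mcA^I]\leq_w N$ and $M[\mcA^J]\leq_w N$; I must show $M[\mcA^{I\cup J}]\leq_w N$, i.e.\ that every independent set $X$ of $M[\mcA^{I\cup J}]$ is independent in $N$. If $x\notin X$, then $X$ is independent in $M = N\del x$, hence in $N$. If $x\in X$, choose an $\mcA^{I\cup J}$-matching $\phi\colon X\to[r]$. Since $x$ belongs only to the sets $A_i^{I\cup J}$ with $i\in I\cup J$, we have $\phi(x)\in I\cup J$. If $\phi(x)\in I$, then the definition of $\mcA^I$ shows that $\phi$ is also an $\mcA^I$-matching of $X$, so $X$ is independent in $M[\mcA^I]$, and thus in $N$ by hypothesis; the case $\phi(x)\in J$ is symmetric. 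This establishes the weak-order inequality, so $M[\mcA^{I\cup J}]$ is indeed the join in Crapo's lattice of extensions.

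There is no serious obstacle here: once one spots that an $\mcA^{I\cup J}$-matching of a set containing $x$ automatically specializes to an $\mcA^I$- or $\mcA^J$-matching according to the value of $\phi(x)$, the rest is bookkeeping. The only subtlety worth flagging is that the argument uses the full lattice of extensions (not just the transversal ones), so one must verify the least-upper-bound condition against arbitrary $N$, which is precisely what the matching dichotomy above delivers.
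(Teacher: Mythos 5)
Your proof is correct. The paper itself does not prove Lemma \ref{lem:easyjoin} here (it is quoted from \cite{extpres}), but your argument is complete and is essentially the expected one: the upper-bound half is exactly Lemma \ref{lem:inclusinggivesweak}, and your key step --- that an $\mcA^{I\cup J}$-matching of a set containing $x$ has $\phi(x)\in I$ or $\phi(x)\in J$ and hence restricts to an $\mcA^I$- or $\mcA^J$-matching --- is the same dichotomy the paper deploys in the proofs of Theorems \ref{thm:distriblattice} and \ref{thm:faithful2}. You also correctly verify minimality against arbitrary single-element extensions $N$ (using only $N\del x = M$), which is what is needed since Crapo's lattice of extensions is ordered by the weak order; nothing is missing.
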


\begin{cor}
  Let $\mcA$ and $\mcB$ be presentations of a transversal matroid $M$.
  If $M_1$ and $M_2$ are in both $T_\mcA$ and $T_\mcB$, then their
  join in $T_\mcA$ is their join in $T_\mcB$.
\end{cor}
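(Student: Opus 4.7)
The plan is to reduce both joins to a common object, namely the join in Crapo's lattice of all single-element extensions of $M$, and appeal to Lemma \ref{lem:easyjoin}.

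First I would fix the relevant index sets. Since $M_1,M_2\in T_\mcA$, write $M_1=M[\mcA^{I_1}]$ and $M_2=M[\mcA^{I_2}]$ with $I_1,I_2\in L_\mcA$. Since $M_1,M_2\in T_\mcB$, write $M_1=M[\mcB^{J_1}]$ and $M_2=M[\mcB^{J_2}]$ with $J_1,J_2\in L_\mcB$. By Theorem \ref{thm:faithful2}, the bijection $I\mapsto M[\mcA^I]$ is a lattice isomorphism from $L_\mcA$ onto $T_\mcA$, and by Theorem \ref{thm:distriblattice} the join in $L_\mcA$ is union; hence the join of $M_1$ and $M_2$ in $T_\mcA$ is $M[\mcA^{I_1\cup I_2}]$. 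Analogously, their join in $T_\mcB$ is $M[\mcB^{J_1\cup J_2}]$.

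Next I would invoke Lemma \ref{lem:easyjoin} twice: it tells us that $M[\mcA^{I_1\cup I_2}]$ is the join of $M[\mcA^{I_1}]=M_1$ and $M[\mcA^{I_2}]=M_2$ in the lattice of all single-element extensions of $M$, and that $M[\mcB^{J_1\cup J_2}]$ is likewise the join of $M_1$ and $M_2$ in that same ambient lattice. Since the join in Crapo's lattice of extensions is an intrinsic operation on the pair $(M_1,M_2)$ that does not depend on any choice of presentation, the two matroids $M[\mcA^{I_1\cup I_2}]$ and $M[\mcB^{J_1\cup J_2}]$ coincide, yielding the desired equality.

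There is essentially no obstacle here: Lemma \ref{lem:easyjoin} already does all the real work, the only task is to observe that the join inside $T_\mcA$ (resp.\ $T_\mcB$), computed as union of indexing sets, \emph{is} the join in the ambient lattice of extensions. The one point worth flagging is that we should not confuse $I_1$ and $J_1$ (or $I_2$ and $J_2$): they index the same matroid $M_1$ (resp.\ $M_2$) but live in different lattices $L_\mcA$ and $L_\mcB$. The argument nevertheless goes through because the conclusion is about the matroid $M_1\join M_2$, not about any particular indexing of it.
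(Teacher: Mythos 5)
Your proposal is correct and follows exactly the paper's own argument: identify the joins in $T_\mcA$ and $T_\mcB$ as $M[\mcA^{I_1\cup I_2}]$ and $M[\mcB^{J_1\cup J_2}]$ via Theorem \ref{thm:faithful2}, then use Lemma \ref{lem:easyjoin} to recognize both as the presentation-independent join of $M_1$ and $M_2$ in the lattice of extensions of $M$. Nothing further is needed.
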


\begin{proof}
  Since $M_1$ and $M_2$ are in both $T_\mcA$ and $T_\mcB$, there are
  sets $I_1$ and $I_2$ in $L_\mcA$, and sets $J_1$ and $J_2$ in
  $L_\mcB$, with $M[\mcA^{I_1}] = M[\mcB^{J_1}] =M_1$ and
  $M[\mcA^{I_2}] = M[\mcB^{J_2}] =M_2$.  By the isomorphism in Theorem
  \ref{thm:faithful2}, the join of $M_1$ and $M_2$ in $T_\mcA$ is
  $M[\mcA^{I_1\cup I_2}]$, and that in $T_\mcB$ is $M[\mcB^{J_1\cup
    J_2}]$.  As claimed, these matroids are equal since, by Lemma
  \ref{lem:easyjoin},
  \begin{equation}\label{eq:join}
    M[\mcA^{I_1\cup I_2}] =  M[\mcA^{I_1}]\join M[\mcA^{I_2}] =
    M[\mcB^{J_1}]\join M[\mcB^{J_2}] = M[\mcB^{J_1\cup J_2}],
  \end{equation} where
  $\join$ denotes the join in the lattice of extensions of $M$.
\end{proof}

The situation for meets is more complex, as the example below
illustrates.

\medskip

\noindent
\textsc{Example 2.}  Consider the matroid $M$ shown in the first two
diagrams in Figure \ref{fig:meetexample}, and the two presentations
given there.  In the extension $M_1=M[\mcA^{\{1\}}]=M[\mcB^{\{1\}}]$,
both $\{x,a,b\}$ and $\{x,c,d\}$ are lines.  In the extension
$M_2=M[\mcA^{\{2\}}]=M[\mcB^{\{2\}}]$, both $\{x,c,d\}$ and
$\{x,e,f\}$ are lines.  In the meet of $M_1$ and $M_2$ in the lattice
of extensions of $M$, each of $\{x,a,b\}$, $\{x,c,d\}$ and $\{x,e,f\}$
is dependent; this meet, which is shown in the third diagram in Figure
\ref{fig:meetexample}, is not transversal.  One way to see this is
that the three coplanar $3$-point lines through $x$ are incompatible
with the affine representation described at the end of Section
\ref{sec:background}.  That view also implies that the meet of $M_1$
and $M_2$ in both $T_\mcA$ and $T_\mcB$ is formed by extending $M$ by
a loop.

\medskip

\begin{figure}
\centering
\begin{tikzpicture}[scale=1]
  \node[inner sep = 0mm] (1) at (0,1) {};
  \node at (-1.45,1) {\small $A_1=\{a,b,c,d,g\}$};
  \node[inner sep = 0mm] (1') at (1.45,1) {};
  \node[inner sep = 0mm] (2) at (1.5,0.2) {};
  \node at (1.5,-0.1) {\small $A_3=\{a,b,e,f,g\}$};
  \node[inner sep = 0mm] (3) at (3,1) {};
  \node at (4.4,1) {\small $A_4=\{g,h\}$};
  \node[inner sep = 0mm] (3') at (1.55,1) {};
  \node[inner sep = 0mm] (4) at (1.5,2.25) {};
  \node at (1.5,2.5) {\small $A_2=\{c,d,e,f,g\}$};
\foreach \from/\to in {1/2,1/1',3'/3,1/4,2/3,2/4,3/4}
\draw(\from)--(\to);

\filldraw (0.5,1.416) node[above left] {$c$} circle  (2pt);
\filldraw (1,1.833) node[above left] {$d$} circle  (2pt);
\filldraw (0.5,0.733) node[below left] {$a$} circle  (2pt);
\filldraw (1,0.466) node[below left] {$b$} circle  (2pt);
\filldraw (1.5,1.5) node[right] {$e$} circle  (2pt);
\filldraw (1.5,0.75) node[right] {$f$} circle  (2pt);
\filldraw (3,1) node[right] {$h$} circle  (2pt);
\filldraw (2.15,1.4) node[below right] {$g$} circle  (2pt);
\end{tikzpicture}

\vspace{15pt}

\begin{tikzpicture}[scale=1]
  \node[inner sep = 0mm] (1) at (0,1) {};
  \node at (-1.45,1) {\small $B_1=\{a,b,c,d,h\}$};
  \node[inner sep = 0mm] (1') at (1.45,1) {};
  \node[inner sep = 0mm] (2) at (1.5,0.2) {};
  \node at (1.5,-0.1) {\small $B_3=\{a,b,e,f,h\}$};
  \node[inner sep = 0mm] (3) at (3,1) {};
  \node at (4.4,1) {\small $B_4=\{g,h\}$};
  \node[inner sep = 0mm] (3') at (1.55,1) {};
  \node[inner sep = 0mm] (4) at (1.5,2.25) {};
  \node at (1.5,2.5) {\small $B_2=\{c,d,e,f,h\}$};
\foreach \from/\to in {1/2,1/1',3'/3,1/4,2/3,2/4,3/4}
\draw(\from)--(\to);

\filldraw (0.5,1.416) node[above left] {$c$} circle  (2pt);
\filldraw (1,1.833) node[above left] {$d$} circle  (2pt);
\filldraw (0.5,0.733) node[below left] {$a$} circle  (2pt);
\filldraw (1,0.466) node[below left] {$b$} circle  (2pt);
\filldraw (1.5,1.5) node[right] {$e$} circle  (2pt);
\filldraw (1.5,0.75) node[right] {$f$} circle  (2pt);
\filldraw (3,1) node[right] {$g$} circle  (2pt);
\filldraw (2.15,1.4) node[below right] {$h$} circle  (2pt);
\end{tikzpicture}

\vspace{15pt}

\begin{tikzpicture}[scale=1]
  \node[inner sep = 0mm] (1) at (0.4,0.5) {};
  \node[inner sep = 0mm] (2) at (0.4,2.5) {};
  \node[inner sep = 0mm] (3) at (1.5,0) {};
  \node[inner sep = 0mm] (4) at (1.5,2) {};
  \node[inner sep = 0mm] (5) at (3.3,0.5) {};
  \node[inner sep = 0mm] (6) at (3.3,2.5) {};

  \node[inner sep = 0mm] (7) at (1.5,1.2) {};
  \node[inner sep = 0mm] (8) at (2.6,0.6) {};
  \node[inner sep = 0mm] (9) at (2.75,1.2) {};
  \node[inner sep = 0mm] (10) at (2.6,2) {};

\foreach \from/\to in {1/2,1/3,2/4,4/3,6/4,3/5,5/6,7/8,7/9,7/10}
\draw(\from)--(\to);

\filldraw (2.2,1.2) node[above] {$c$} circle  (2pt);
\filldraw (2.75,1.2) node[right] {$d$} circle  (2pt);
\filldraw (2,1.56) node[above left] {$a$} circle  (2pt);
\filldraw (2.6,2) node[right] {$b$} circle  (2pt);
\filldraw (2,0.927) node[below left] {$e$} circle  (2pt);
\filldraw (2.6,0.6) node[right] {$f$} circle  (2pt);
\filldraw (0.9,0.8) node[left] {$g$} circle  (2pt);
\filldraw (0.9,1.6) node[left] {$h$} circle  (2pt);
\filldraw (1.5,1.2) node[left] {$x$} circle  (2pt);
\end{tikzpicture}
\caption{The presentations and the meet of the extensions discussed in
  Example 2.  In the first figure, $g$ is in no proper face of the
  simplex; in the second, $h$ is in no proper face.}
\label{fig:meetexample}
\end{figure}
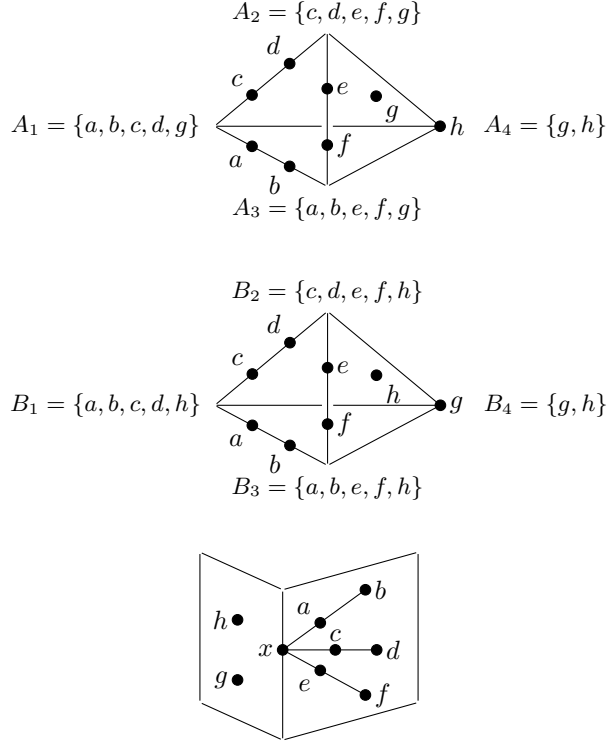

This example illustrates the next result: the meet of $M_1$ and $M_2$
in $T_\mcA$ is their meet in $T_\mcB$ (even though these can differ
from their meet in the lattice of all extensions).

\begin{thm}\label{thm:LABunion}
  If $\mcA$ and $\mcB$ are presentations of $M$, then the set
  $$L_{\mcA,\mcB} = \{I\in L_\mcA\,:\, M[\mcA^I]=M[\mcB^J] \text{ for
    some } J\in L_\mcB\}$$ is a sublattice of $L_\mcA$.  The
  sublattices $L_{\mcA,\mcB}$, of $L_\mcA$, and $L_{\mcB,\mcA}$, of
  $L_\mcB$, are isomorphic, and $T_\mcA\cap T_\mcB$ is a sublattice of
  both $T_\mcA$ and $T_\mcB$.
\end{thm}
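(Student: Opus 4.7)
The plan is to reduce the whole statement to one observation: every extension in $T_\mcA\cup T_\mcB$ is a \emph{principal} extension of $M$, and the flat along which one extends is an invariant of the extension. Once this is in hand, joins and meets in $L_\mcA$ and in $L_\mcB$ agree on $T_\mcA\cap T_\mcB$ because they are computed from the same family of flats.

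Concretely, given $I\in L_\mcA$, Theorem \ref{thm:chraracerizeclosedsets}(1$'$) writes $I=s_\mcA(Y)$ with $r(Y)=|s_\mcA(Y)|$, and then Lemmas \ref{lem:supp1}(2) and \ref{lem:princip} give $M[\mcA^I]=M+_{_Y}x$ with
$$\cl_M(Y)=\{e\in E(M)\,:\,s_\mcA(e)\subseteq I\}=:F_I.$$
A principal extension $M+_{_F}x$ along a flat $F$ determines $F$ (by inspection of the rank function, or because, by Lemma \ref{lem:princip}, $F\cup\{x\}$ is the least cyclic flat of $M+_{_F}x$ containing $x$). Hence if $I\in L_{\mcA,\mcB}$ corresponds to $J\in L_{\mcB,\mcA}$ via $M[\mcA^I]=M[\mcB^J]$, then $F_I=\{e\,:\,s_\mcB(e)\subseteq J\}=:F'_J$.

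Closure of $L_{\mcA,\mcB}$ under joins is immediate: for $I_1,I_2\in L_{\mcA,\mcB}$ with partners $J_1,J_2$, Lemma \ref{lem:easyjoin} gives
$$M[\mcA^{I_1\cup I_2}]=M[\mcA^{I_1}]\join M[\mcA^{I_2}]=M[\mcB^{J_1}]\join M[\mcB^{J_2}]=M[\mcB^{J_1\cup J_2}],$$
so $I_1\cup I_2\in L_{\mcA,\mcB}$ with partner $J_1\cup J_2$. The main obstacle is closure under meets, which could plausibly fail because meets in $T_\mcA$ and $T_\mcB$ need not agree with meets in the lattice of all extensions (as Example 2 warns). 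Here I would exploit the flat description:
$$F_{I_1\cap I_2}=\{e\,:\,s_\mcA(e)\subseteq I_1\cap I_2\}=F_{I_1}\cap F_{I_2},$$
and, analogously, $F'_{J_1\cap J_2}=F'_{J_1}\cap F'_{J_2}$. Since $F_{I_k}=F'_{J_k}$ for $k=1,2$, these intersections coincide, so $M[\mcA^{I_1\cap I_2}]=M+_{_{F_{I_1}\cap F_{I_2}}}x=M[\mcB^{J_1\cap J_2}]$. This places $I_1\cap I_2$ in $L_{\mcA,\mcB}$ and shows that the canonical bijection $I\mapsto J$ sends meets to meets.

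The rest is bookkeeping: $L_{\mcA,\mcB}$ is a sublattice of $L_\mcA$, by symmetry $L_{\mcB,\mcA}$ is a sublattice of $L_\mcB$, and the bijection $I\mapsto J$ respects both $\cup$ and $\cap$, so it is a lattice isomorphism. Transporting $L_{\mcA,\mcB}$ across the isomorphism of Theorem \ref{thm:faithful2} identifies it with $T_\mcA\cap T_\mcB$ inside $T_\mcA$, and likewise inside $T_\mcB$, proving the final assertion.
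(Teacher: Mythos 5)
Your reduction rests on the claim that every extension in $T_\mcA$ is a principal extension of $M$, obtained by reading Theorem \ref{thm:chraracerizeclosedsets} as saying that every $I\in L_\mcA$ has the form $s_\mcA(Y)$ with $r(Y)=|s_\mcA(Y)|$. That is a misreading: the theorem says $L_\mcA$ consists of such supports \emph{together with all intersections of such supports}, and an intersection need not itself be such a support, nor need the corresponding extension be principal. The paper's own Example 2 already refutes the claim: there $M_1=M[\mcA^{\{1\}}]=M[\mcB^{\{1\}}]$ lies in $T_\mcA\cap T_\mcB$ and has $x$ on the two disjoint lines $\{x,a,b\}$ and $\{x,c,d\}$, which is impossible for a principal extension (the flat $F$ would have to lie in $\cl(\{a,b\})\cap\cl(\{c,d\})=\emptyset$, forcing $x$ to be a loop). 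For a counterexample to your meet formula itself, take the matroid of Figure \ref{fig:example2} with $I_1=\{1,2,3\}=s_\mcA(\{a,b,c\})$ and $I_2=\{2,3,4\}=s_\mcA(\{r,s,t\})$: then $F_{I_1}\cap F_{I_2}=\{a,b,c,d\}\cap\{r,s,t,u\}=\emptyset$, so your identity would make $M[\mcA^{I_1\cap I_2}]$ the extension by a loop, yet in $M[\mcA^{\{2,3\}}]$ the element $x$ is certainly not a loop (e.g.\ $\{x,r,s\}$ is independent). The underlying error is that $M[\mcA^I]$ is not determined by the flat $F_I=\{e:s_\mcA(e)\subseteq I\}$; it is not in general $M+_{_{F_I}}x$.

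This gap sits exactly at the crux of the theorem. Closure under joins is indeed immediate from Lemma \ref{lem:easyjoin}, as you say, and your $t=1$ observation (common principal extensions correspond to sets $X$ with $|s_\mcA(X)|=r(X)=|s_\mcB(X)|$) is correct and appears in the paper's proof as a special case. But closure under meets cannot be settled by comparing flats, precisely because the meet is typically not a principal extension and Example 2 warns that the meet in $T_\mcA$ can differ from the meet in Crapo's lattice of all extensions. The paper handles this by reducing to statement (\ref{thm:LABunion}.1) via equation (\ref{eq:cirsupint}) and then proving, for each circuit $C$ of $M[\mcA^I]$ through $x$, that $|s_{\mcA^I}(C)|=|s_{\mcB^J}(C)|$, using Lemma \ref{lem:supportunion} and an inclusion--exclusion computation over the sets $s\bigl((C-\{x\})\cup X_k\bigr)$; some argument of this kind (or another genuinely new idea) is needed to replace the step you skipped.
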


The proof of this theorem uses the following result from
\cite{extpres}.

\begin{lemma}\label{lem:supportunion}
  Let $M$ be $M[\mcA]$.  For subsets $X$ and $Y$ of $E(M)$, if
  $r(X)=|s_\mcA(X)|$ and $r(Y)=|s_\mcA(Y)|$, then $r(X\cup
  Y)=|s_\mcA(X\cup Y)|$.
\end{lemma}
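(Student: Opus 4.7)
\bigskip

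\noindent
\textbf{Proof proposal.} The plan is to derive the lemma from the submodularity of the matroid rank function together with the (obvious) submodular behavior of $|s_\mcA(\cdot)|$. The key elementary facts I will use are
\begin{enumerate}
\item $s_\mcA(X\cup Y)=s_\mcA(X)\cup s_\mcA(Y)$ (immediate from the definition of $s_\mcA$);
\item $s_\mcA(X\cap Y)\subseteq s_\mcA(X)\cap s_\mcA(Y)$ (also immediate);
\item $r(Z)\leq |s_\mcA(Z)|$ for every $Z\subseteq E(M)$ (any basis of $Z$ is independent in $M$, so Hall's theorem provides an injective $\mcA$-matching whose image lies in $s_\mcA(Z)$).
\end{enumerate}
The direction $r(X\cup Y)\leq |s_\mcA(X\cup Y)|$ of the desired equality is just fact~(3), so everything reduces to the reverse inequality.

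To obtain the reverse inequality, first apply submodularity of $r$ to get $r(X\cup Y)\geq r(X)+r(Y)-r(X\cap Y)$. Substituting the hypotheses $r(X)=|s_\mcA(X)|$ and $r(Y)=|s_\mcA(Y)|$, and rewriting $|s_\mcA(X)|+|s_\mcA(Y)|$ using ordinary inclusion-exclusion together with fact~(1) gives
\[
|s_\mcA(X)|+|s_\mcA(Y)|=|s_\mcA(X\cup Y)|+|s_\mcA(X)\cap s_\mcA(Y)|.
\]
Thus $r(X\cup Y)\geq |s_\mcA(X\cup Y)|+\bigl(|s_\mcA(X)\cap s_\mcA(Y)|-r(X\cap Y)\bigr)$. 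The parenthesized quantity is non-negative by facts~(2) and (3) applied to $Z=X\cap Y$: $r(X\cap Y)\leq |s_\mcA(X\cap Y)|\leq |s_\mcA(X)\cap s_\mcA(Y)|$. So $r(X\cup Y)\geq |s_\mcA(X\cup Y)|$, completing the proof.

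There is no serious obstacle here; the whole argument is a short chain of inequalities, and the only subtlety is recognizing that one should combine the submodularity of $r$ with the always-true bound $r(Z)\leq |s_\mcA(Z)|$ (applied only at $Z=X\cap Y$) rather than trying to re-prove a Hall-type condition for $X\cup Y$ directly from the Hall-type conditions for $X$ and $Y$. A direct Hall-based attempt runs into the difficulty that the natural ``defect'' bound for $X\cup Y$ involves intersections that are not obviously small; the submodularity route sidesteps that entirely.
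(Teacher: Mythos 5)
\noindent
Your reduction to the single inequality $r(X\cup Y)\geq |s_\mcA(X\cup Y)|$ is fine, and facts (1)--(3) are all correct. The fatal step is the appeal to ``submodularity'' in the form $r(X\cup Y)\geq r(X)+r(Y)-r(X\cap Y)$. The matroid rank function satisfies $r(X)+r(Y)\geq r(X\cup Y)+r(X\cap Y)$, i.e., submodularity gives an \emph{upper} bound $r(X\cup Y)\leq r(X)+r(Y)-r(X\cap Y)$; the inequality you wrote is supermodularity, which rank functions do not satisfy. It fails even under the hypotheses of the lemma: take $M=U_{1,2}$ on $\{a,b\}$ with $\mcA=(\{a,b\})$, $X=\{a\}$, $Y=\{b\}$. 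Then $r(X)=|s_\mcA(X)|=1$ and $r(Y)=|s_\mcA(Y)|=1$, but $r(X\cup Y)=1<2=r(X)+r(Y)-r(X\cap Y)$. (In fact, tracing your own chain of identities shows that your inequality, together with the lemma's conclusion, would force $r(X\cap Y)=|s_\mcA(X)\cap s_\mcA(Y)|$, which this example also violates.)

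The real content of the lemma is precisely the lower bound you tried to get for free: one must exhibit an independent subset of $X\cup Y$ of size $|s_\mcA(X)\cup s_\mcA(Y)|$, which amounts to a Hall-type defect argument gluing a matching of a basis of $X$ onto $s_\mcA(X)$ with a matching of a basis of $Y$ onto $s_\mcA(Y)$; your closing remark correctly identifies that this is where the difficulty lies, but the submodularity route does not sidestep it --- it just asserts a false inequality in its place. Note that this paper does not reprove the lemma; it imports it from \cite{extpres}, so if you want a self-contained argument you will need to supply that combinatorial step (or quote the result as the paper does).
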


\begin{proof}[Proof of Theorem \ref{thm:LABunion}]
  The closure of $L_{\mcA,\mcB}$ under unions follows from the
  argument that gives equation (\ref{eq:join}).  We next show that the
  closure of $L_{\mcA,\mcB}$ under intersections follows from
  statement (\ref{thm:LABunion}.1), which we then prove.
  \begin{enumerate}
  \item[(\ref{thm:LABunion}.1)] \emph{For subsets $X_1,X_2,\ldots,X_t$
      of $E(M)$, if $|s_\mcA(X_k)| = r(X_k) = |s_\mcB(X_k)|$ for all
      $k\in[t]$, then $\bigcap_{k=1}^t s_\mcA(X_k)\in L_{\mcA,\mcB}$.}
  \end{enumerate}
  To see why proving this statement suffices, consider a pair $I_1\in
  L_\mcA$ and $J_1\in L_\mcB$ with $M[\mcA^{I_1}]=M[\mcB^{J_1}]$; let
  $M'$ denote this extension of $M$.  By equation
  (\ref{eq:cirsupint}),
  $$I_1 =\bigcap_{C\in\mcC_x}s_\mcA(C-\{x\})
  \qquad\text{ and } \qquad J_1
  =\bigcap_{C\in\mcC_x}s_\mcB(C-\{x\}),$$ where $\mcC_x$ is the set of
  circuits of $M'$ that contain $x$.  Now
  $s_{\mcA^{I_1}}(C)=s_\mcA(C-\{x\})$ for all $C\in\mcC_x$, so Lemma
  \ref{lem:supp1} gives $|s_\mcA(C-\{x\})|=r(C-\{x\})=|C-\{x\}|$, and
  the corresponding statements hold for $s_\mcB(C-\{x\})$.  The
  corresponding conclusions also hold for any other pair $I_2\in
  L_\mcA$ and $J_2\in L_\mcB$ with $M[\mcA^{I_2}]=M[\mcB^{J_2}]$, so
  $I_1\cap I_2$ has the form $\bigcap_{k=1}^t s_\mcA(X_k)$ that the
  claim treats.

  The case $t=1$ merits special attention: if $|s_\mcA(X)| = r(X) =
  |s_\mcB(X)|$ for some $X\subseteq E(M)$, then $s_\mcA(X)\in
  L_{\mcA,\mcB}$ since $M[\mcA^{s_\mcA(X)}]$ and $M[\mcB^{s_\mcB(X)}]$
  are, by Lemma \ref{lem:princip}, both the principal extension
  $M+_{_X}x$ of $M$.

  Let the sets $X_1,X_2,\ldots,X_t$ be as in statement
  (\ref{thm:LABunion}.1).  Set $I = \bigcap_{k=1}^t s_\mcA(X_k)$ and
  $J = \bigcap_{k=1}^t s_\mcB(X_k)$.  To prove the equality
  $M[\mcA^I]= M[\mcB^J]$, which proves statement
  (\ref{thm:LABunion}.1), by symmetry it suffices to prove that each
  circuit $C$ of $M[\mcA^I]$ that contains $x$ is dependent in
  $M[\mcB^J]$.  Fix such a circuit $C$ of $M[\mcA^I]$.

  We claim that for each $k\in [t]$, we have
  \begin{equation}\label{eqn:Cgoal}
    \bigl|s_\mcA\bigl((C-\{x\})\cup X_k\bigr)\bigr| = r\bigl((C-\{x\})\cup
    X_k\bigr) = \bigl|s_\mcB\bigl((C-\{x\})\cup X_k\bigr)\bigr|.
  \end{equation}  
  To see this, let $\cl$ be the closure operator of $M$, and $\cl_I$
  that of $M[\mcA^I]$.  For any $y\in C-\{x\}$,
  $$\cl\bigl((C-\{x,y\})\cup X_k\bigr) =
  \cl_I\bigl((C-\{x,y\})\cup X_k\bigr)-\{x\}.$$ Lemma \ref{lem:supp1}
  gives $x\in \cl_I(X_k)$.  Thus, $y$ is in
  $\cl_I\bigl((C-\{x,y\})\cup X_k\bigr)$ since $C$ is a circuit of
  $M[\mcA^I]$.  Thus, $y\in \cl\bigl((C-\{x,y\})\cup X_k\bigr)$.  By
  the formulation of closure in terms of circuits (as in
  \cite[Proposition 1.4.11]{oxley}), it follows that each $y \in
  C-(X_k\cup \{x\})$ is in some circuit, say $C_y$, of $M$ with
  $C_y\subseteq X_k\cup (C-\{x\})$. Now
  $|s_\mcA(C_y)|=r(C_y)=|s_\mcB(C_y)|$ by Lemma \ref{lem:supp1}.
  Since this applies for each $y\in C-(X_k\cup \{x\})$, and since we
  also have $|s_\mcA(X_k)| = r(X_k) = |s_\mcB(X_k)|$, equation
  (\ref{eqn:Cgoal}) now follows from Lemma \ref{lem:supportunion}.

  From equation (\ref{eqn:Cgoal}), another application of Lemma
  \ref{lem:supportunion} gives
  $$\Bigl|s_\mcA\Bigl((C-\{x\})\cup \bigl(\bigcup_{k\in P}
  X_k\bigr)\Bigr)\Bigr| =r\Bigl((C-\{x\})\cup \bigl(\bigcup_{k\in P}
  X_k\bigr)\Bigr)= \Bigl|s_\mcB\Bigl((C-\{x\})\cup \bigl(\bigcup_{k\in
    P} X_k\bigr)\Bigr)\Bigr|$$ for any non-empty subset $P$ of $[t]$.
  Thus, for any such $P$,
  $$ \Bigl|\bigcup_{k\in P} s_{\mcA}\bigl((C-\{x\})\cup
  X_k\bigr)\Bigr|= \Bigl|\bigcup_{k\in P} s_{\mcB}\bigl((C-\{x\})\cup
  X_k\bigr)\Bigr|.$$

  Now
  \begin{align*}
    \bigcap_{k=1}^t s_{\mcA}\bigl((C-\{x\})\cup X_k\bigr) = &\,
    \bigcap_{k=1}^t \bigl( s_{\mcA}(C-\{x\})\cup s_{\mcA}(X_k)\bigr) \\
    = &\, s_{\mcA}(C-\{x\})\cup \Bigl( \bigcap_{k=1}^t s_{\mcA}(X_k)\Bigr) \\
    = &\, s_{\mcA}(C-\{x\})\cup  I \\
    = &\, s_{\mcA^I}(C). \\
  \end{align*}
  The same argument applies to $\mcB$ and gives
  $$s_{\mcB^J}(C)=\bigcap_{k=1}^t s_{\mcB}\bigl((C-\{x\})\cup X_k\bigr).$$
 
  The deductions in the previous two paragraphs and
  inclusion-exclusion give
  \begin{align*}
    |s_{\mcA^I}(C)| = & \, \Bigl| \bigcap_{k=1}^t
    s_{\mcA}\bigl((C-\{x\})\cup
    X_k\bigr) \Bigr|\\
    = & \, \sum_{P\subseteq [t]\,:\, P\ne \emptyset} (-1)^{|P|+1} \,
    \Bigl|\bigcup_{k\in P} s_{\mcA}\bigl((C-\{x\})\cup
    X_k\bigr)\Bigr|\\
    = & \, \sum_{P\subseteq [t]\,:\, P\ne \emptyset} (-1)^{|P|+1} \,
    \Bigl|\bigcup_{k\in P} s_{\mcB}\bigl((C-\{x\})\cup
    X_k\bigr)\Bigr|\\
    = & \, \Bigl|\bigcap_{k=1}^t s_{\mcB}\bigl((C-\{x\})\cup
    X_k\bigr)\Bigr|\\
    = & \, |s_{\mcB^J}(C)|.
  \end{align*}
  Since $C$ is a circuit of $M[\mcA^I]$, we have
  $|s_{\mcA^I}(C)|<|C|$.  Thus $|s_{\mcB^J}(C)|<|C|$, so $C$ is
  dependent in $M[\mcB^J]$, as needed.

  The assertions about $L_{\mcB,\mcA}$ and $T_\mcA\cap T_\mcB$ now
  follow easily.
\end{proof}

The proof of Theorem \ref{thm:LABunion} and its reduction to statement
(\ref{thm:LABunion}.1) give the following alternative description of
$L_{\mcA,\mcB}$.

\begin{thm}
  For presentations $\mcA$ and $\mcB$ of $M$, the sublattice
  $L_{\mcA,\mcB}$ of $L_\mcA$ consists of the sets $I\in L_\mcA$ that
  satisfy condition ($*$), as well as all intersections of such sets:
  \begin{quote} 
    \emph{($*$)} \ $I=s_\mcA(X)$ for some $X\subseteq E(M)$ with
    $|s_\mcA(X)|=r(X)=|s_\mcB(X)|$.
  \end{quote}
  The sets $I$ that satisfy condition \emph{($*$)} correspond to the
  principal extensions $M+_{_X}x$ of $M$ that are common to $T_\mcA$
  and $T_\mcB$.
\end{thm}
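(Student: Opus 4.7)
The plan is to extract both directions of the characterization from material already assembled in the proof of Theorem~\ref{thm:LABunion}, together with Lemma~\ref{lem:princip} and the circuit description in equation~(\ref{eq:cirsupint}). The second assertion about principal extensions will then be essentially immediate.

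First I would show that every $I$ satisfying condition~($*$) lies in $L_{\mcA,\mcB}$. This is exactly the $t=1$ case flagged in the proof of Theorem~\ref{thm:LABunion}: if $X\subseteq E(M)$ witnesses $(*)$, so $|s_\mcA(X)|=r(X)=|s_\mcB(X)|$, then by Lemma~\ref{lem:princip} both $M[\mcA^{s_\mcA(X)}]$ and $M[\mcB^{s_\mcB(X)}]$ equal the principal extension $M+_{_X}x$. Hence $I=s_\mcA(X)$ belongs to $L_\mcA$ (it is the $\mcA$-support of an independent set $X'$ of $M$ with $|X'|=|s_\mcA(X')|$ in the sense of Theorem~\ref{thm:chraracerizeclosedsets}(1$'$)) and realizes an extension also realized by some $J\in L_\mcB$. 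Since $L_{\mcA,\mcB}$ is a sublattice of $L_\mcA$ by Theorem~\ref{thm:LABunion}, it is closed under intersection, so all intersections of such sets also belong to $L_{\mcA,\mcB}$.

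For the converse direction, I would fix $I\in L_{\mcA,\mcB}$ and choose $J\in L_\mcB$ with $M[\mcA^I]=M[\mcB^J]$; call this common extension $M'$ and let $\mcC_x$ be the set of circuits of $M'$ containing $x$. Equation~(\ref{eq:cirsupint}) gives
\[
I=\bigcap_{C\in\mcC_x}s_\mcA(C-\{x\}),
\]
so it suffices to verify that each factor $s_\mcA(C-\{x\})$ satisfies $(*)$. Setting $X_C=C-\{x\}$, the identity $s_{\mcA^I}(C)=s_\mcA(X_C)\cup I$ combined with Lemma~\ref{lem:supp1} applied in $M'$ yields $|s_\mcA(X_C)|=|X_C|=r(X_C)$. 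Applying the same reasoning to $\mcB^J$ on the same circuit $C$ of $M'$ gives $|s_\mcB(X_C)|=r(X_C)$. Thus $X_C$ is a witness for $(*)$ and $I$ is realized as an intersection of such supports, as claimed.

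The final assertion is then immediate: if $X$ witnesses $(*)$ for $I=s_\mcA(X)$, Lemma~\ref{lem:princip} identifies $M[\mcA^I]$ and $M[\mcB^{s_\mcB(X)}]$ as the common principal extension $M+_{_X}x\in T_\mcA\cap T_\mcB$, and conversely any principal extension $M+_{_X}x$ appearing in both $T_\mcA$ and $T_\mcB$ forces $|s_\mcA(X)|=r(X)=|s_\mcB(X)|$ via Lemma~\ref{lem:princip} again. I do not expect a real obstacle here; the substantive work has already been done inside Theorem~\ref{thm:LABunion}, and the only point requiring care is the symmetric use of Lemma~\ref{lem:supp1} on both the $\mcA$- and $\mcB$-sides in the converse, so that the circuits of the common extension simultaneously produce the support equalities needed by $(*)$.
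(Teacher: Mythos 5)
Your proposal is correct and takes essentially the same route as the paper, which obtains this theorem directly from the proof of Theorem \ref{thm:LABunion}: the $t=1$ case of statement (\ref{thm:LABunion}.1) together with Lemma \ref{lem:princip} gives the forward inclusion (intersections being handled by the sublattice property), and the reduction via equation (\ref{eq:cirsupint}) with Lemma \ref{lem:supp1} applied on both the $\mcA$- and $\mcB$-sides to circuits of the common extension gives the converse. The one lightly glossed point---that a principal extension $M+_{_X}x$ lying in $T_\mcA\cap T_\mcB$ forces $|s_\mcA(X)|=r(X)=|s_\mcB(X)|$ ``via Lemma \ref{lem:princip} again''---is stated at the same level of detail as in the paper and is easily completed by applying Lemma \ref{lem:supp1} to the circuit $B\cup\{x\}$ for a basis $B$ of $\cl_M(X)$ and using the bijection of Theorem \ref{thm:faithful2} to identify the corresponding closed set with $s_\mcA(B)$.
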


We conclude this section with two corollaries.  Note that we can
iterate the operation of extending set systems to get
$(\mcA^{I_1})^{I_2}$, where $x_1$ is added in $\mcA^{I_1}$, and $x_2$
is added in $(\mcA^{I_1})^{I_2}$.  We next show that such extensions,
using sets in $L_{\mcA,\mcB}$, are compatible.

\begin{cor}\label{cor:compatible}
  If $M[\mcA^{I_1}]=M[\mcB^{J_1}]$ and $M[\mcA^{I_2}]=M[\mcB^{J_2}]$
  for some sets $I_1,I_2\in L_\mcA$ and $J_1,J_2\in L_\mcB$, then
  $M[(\mcA^{I_1})^{I_2}]=M[(\mcB^{J_1})^{J_2}]$.
\end{cor}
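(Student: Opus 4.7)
The plan is to apply Theorem~\ref{thm:LABunion} to the pair of presentations $\mcA' = \mcA^{I_1}$ and $\mcB' = \mcB^{J_1}$ of the matroid $M_1 = M[\mcA^{I_1}] = M[\mcB^{J_1}]$. By the definition of iterated extension, $M[(\mcA^{I_1})^{I_2}]$ is exactly the extension $M_1[(\mcA')^{I_2}]$ of $M_1$ by $x_2$, and analogously on the $\mcB$ side. So the task reduces to showing that $I_2$ and $J_2$ define the same extension of $M_1$ when used with $\mcA'$ and $\mcB'$.

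The second hypothesis $M[\mcA^{I_2}] = M[\mcB^{J_2}]$ places $I_2$ in $L_{\mcA, \mcB}$. From the argument developed in the proof of Theorem~\ref{thm:LABunion} (in particular, equation~(\ref{eq:cirsupint}) together with Lemma~\ref{lem:supp1}, applied to the circuits through $x$ in the common extension $M[\mcA^{I_2}]$), I obtain subsets $X_1, \ldots, X_t \subseteq E(M)$ with
\[
|s_\mcA(X_k)| \;=\; r_M(X_k) \;=\; |s_\mcB(X_k)| \qquad \text{for each } k,
\]
and with $I_2 = \bigcap_k s_\mcA(X_k)$ and $J_2 = \bigcap_k s_\mcB(X_k)$. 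The key observation is then that these same $X_k$ serve as witnesses inside $M_1$: since $x_1 \notin X_k$, we have $s_{\mcA'}(X_k) = s_\mcA(X_k)$ and $s_{\mcB'}(X_k) = s_\mcB(X_k)$, and $r_{M_1}(X_k) = r_M(X_k)$ because $M_1 \del x_1 = M$. Hence $|s_{\mcA'}(X_k)| = r_{M_1}(X_k) = |s_{\mcB'}(X_k)|$, and a second application of statement~(\ref{thm:LABunion}.1), now at the level of $M_1$ with presentations $\mcA', \mcB'$ and the same witnesses $X_1, \ldots, X_t$, delivers $M_1[(\mcA')^{I_2}] = M_1[(\mcB')^{J_2}]$, which is the desired equality.

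The main (and essentially only) subtlety is the transfer-of-witnesses step, and it is formal: the sets $X_k$ live in the common base $E(M)$ of the two extended ground sets, and each of the three quantities involved --- the $\mcA$-support, the $\mcB$-support, and the matroid rank --- is invariant under passing from $M$ to $M_1$. No new combinatorial input beyond what is already packaged in Theorem~\ref{thm:LABunion} is needed; the corollary really is a reapplication of that theorem at a higher level.
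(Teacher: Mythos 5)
Your proof is correct and takes essentially the same route as the paper: the paper's own proof likewise observes that the condition ($*$) witnesses (equivalently, the sets $X_k$ arising from equation (\ref{eq:cirsupint}) and Lemma \ref{lem:supp1}) have unchanged supports and ranks when one passes from $M$ to $M[\mcA^{I_1}]=M[\mcB^{J_1}]$, and then reapplies the machinery of statement (\ref{thm:LABunion}.1) at that level. The only cosmetic difference is that the paper also records, via Theorem \ref{thm:chraracerizeclosedsets}, that $I_2\in L_{\mcA^{I_1}}$ and $J_2\in L_{\mcB^{J_1}}$, a point your witness-transfer argument effectively subsumes.
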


\begin{proof}
  The result follows from two observations: (i) Theorem
  \ref{thm:chraracerizeclosedsets} yields $I_2\in L_{\mcA^{I_1}}$ and
  $J_2\in L_{\mcB^{J_1}}$; (ii) if $I_2$ and $X$ satisfy condition
  ($*$) above in $M$, then so do $I_2$ and $X$ in $M[\mcA^{I_1}]$, and
  likewise for intersections of sets that satisfy condition ($*$).
\end{proof}

\begin{cor}
  For $I\in L_\mcA$ and $J\in L_\mcB$, if $M[\mcA^I] = M[\mcB^J]$,
  then $|I|=|J|$.
\end{cor}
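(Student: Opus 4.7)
The plan is to reduce $|I|=|J|$ to an inclusion--exclusion identity driven by the circuit-based description of closed sets from Theorem \ref{thm:chraracerizeclosedsets}. First, if $x$ is a loop of the common extension $M':=M[\mcA^I]=M[\mcB^J]$, then $I=\emptyset=J$, as noted at the end of the proof of Theorem \ref{thm:distriblattice}, so we are done. Otherwise, $x$ lies in some circuit of $M'$, so the set $\mcC_x=\{C_1,\ldots,C_t\}$ of circuits of $M'$ containing $x$ is nonempty; set $X_k=C_k-\{x\}$. By equation (\ref{eq:cirsupint}) of Theorem \ref{thm:chraracerizeclosedsets}, applied to both $\mcA$ and $\mcB$,
$$I=\bigcap_{k=1}^t s_\mcA(X_k)\quad\text{and}\quad J=\bigcap_{k=1}^t s_\mcB(X_k),$$
while Lemma \ref{lem:supp1} gives $|s_\mcA(X_k)|=r(X_k)=|s_\mcB(X_k)|$ for every $k$.

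The key step is to establish, for each nonempty $P\subseteq[t]$, the equality
$$\Bigl|\bigcup_{k\in P}s_\mcA(X_k)\Bigr|=\Bigl|\bigcup_{k\in P}s_\mcB(X_k)\Bigr|.$$
For this, I would iterate Lemma \ref{lem:supportunion} across the sets $\{X_k:k\in P\}$ to obtain $|s_\mcA(\bigcup_{k\in P}X_k)|=r(\bigcup_{k\in P}X_k)=|s_\mcB(\bigcup_{k\in P}X_k)|$; since $s_\mcA$ and $s_\mcB$ each distribute over unions, the outer quantities then become the union sizes above. This bookkeeping---an induction on $|P|$---is the one place that needs any care, but it is essentially a step already taken in the proof of Theorem \ref{thm:LABunion} and requires no new ideas.

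Finally, express $|\bigcap_{k=1}^t s_\mcA(X_k)|$ via inclusion--exclusion in terms of the sizes $|\bigcup_{k\in P}s_\mcA(X_k)|$ for nonempty $P\subseteq[t]$, and similarly for $\mcB$; most cleanly, complement each support in $[r]$ and invoke ordinary inclusion--exclusion on the complements. Since the union sizes agree termwise by the key step, the resulting expressions for $|I|$ and $|J|$ agree, so $|I|=|J|$. The main (modest) obstacle is the inductive extension of Lemma \ref{lem:supportunion}; everything else is immediate from the machinery developed above.
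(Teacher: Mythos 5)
Your proof is correct, but it takes a different route from the paper's. The paper's own proof of this corollary is a two-line argument of a more structural flavor: it applies Corollary \ref{cor:compatible} repeatedly with $I_h=I$ and $J_h=J$, adding clones of $x$ until the set of added elements is cyclic in the common iterated extension, and then invokes Corollary \ref{cor:supp2} (the support of a cyclic set equals its rank) to conclude that the rank of that cyclic set is simultaneously $|I|$ and $|J|$. You instead bypass Corollary \ref{cor:compatible} entirely and re-run the counting machinery from the proof of Theorem \ref{thm:LABunion}: equation (\ref{eq:cirsupint}) expresses $I$ and $J$ as $\bigcap_k s_\mcA(X_k)$ and $\bigcap_k s_\mcB(X_k)$ over the same circuits of the common extension, Lemma \ref{lem:supp1} gives $|s_\mcA(X_k)|=r(X_k)=|s_\mcB(X_k)|$, iterating Lemma \ref{lem:supportunion} (together with the fact that supports distribute over unions) matches the sizes of all unions $\bigl|\bigcup_{k\in P}s_\mcA(X_k)\bigr|=\bigl|\bigcup_{k\in P}s_\mcB(X_k)\bigr|$, and the dual inclusion--exclusion identity (the same one used in the proof of Theorem \ref{thm:LABunion}) converts this termwise agreement into $|I|=|J|$. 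All steps check out; the induction on $|P|$ you flag is routine and is implicitly the same iteration the paper performs in Theorem \ref{thm:LABunion}, and your separate loop case is harmless (it is even unnecessary, since $\{x\}$ is itself a circuit when $I=\emptyset$). What the paper's argument buys is brevity and a conceptual explanation (cyclic sets of clones have support-size equal to rank); what yours buys is independence from Corollary \ref{cor:compatible}, at the cost of repeating, in a slightly simplified form, the inclusion--exclusion computation already carried out in Theorem \ref{thm:LABunion}.
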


\begin{proof}
  Apply Corollary \ref{cor:compatible} repeatedly, with each $I_h=I$
  and each $J_h=J$, until the set of added elements is cyclic in the
  extension; the rank of this cyclic set must be both $|I|$ and $|J|$.
\end{proof}

\subsection{How to get any finite distributive lattice}

We show that each sublattice of $2^{[r]}$ that includes both
$\emptyset$ and $[r]$ is the lattice $L_\mcA$ for some presentation
$\mcA$ of some transversal matroid of rank $r$; indeed, we prove two
refinements of this result.  Up to isomorphism, this result covers all
finite distributive lattices since each such lattice $L$ is isomorphic
to the lattice of order ideals of some finite ordered set
(specifically, the induced order on the set of join-irreducible
elements of $L$; see, e.g., \cite[Theorem II.2.5]{aigner}).  Combining
the result below with Theorem \ref{thm:faithful2} shows any
distributive lattice is isomorphic to $T_\mcA$ for some presentation
$\mcA$ of some transversal matroid.

\begin{thm}\label{thm:alldistributivelattices}
  Let $L$ be a sublattice of $2^{[r]}$ that contains both $\emptyset$
  and $[r]$.
  \begin{enumerate}
  \item There is a rank-$r$ transversal matroid $M$ and maximal
    presentation $\mcA$ of $M$ with $L=L_\mcA$.
  \item For any $n\geq r$, there is a presentation $\mcB$ of the
    uniform matroid $U_{r,n}$ with $L=L_\mcB$.
  \end{enumerate}
\end{thm}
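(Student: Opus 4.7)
The plan is to construct a suitable presentation in each case by prescribing the supports $s(e)$ of individual ground-set elements.

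For part~(1), let $\mcJ$ denote the set of join-irreducible elements of $L$; for each $I\in\mcJ$, introduce $|I|+1$ new elements $e^I_1,\ldots,e^I_{|I|+1}$, and define $\mcA=(A_i:i\in[r])$ by declaring $e^I_m\in A_i$ iff $i\in I$, so that $s_\mcA(e^I_m)=I$. Three things need to be verified. Rank~$r$: since $[r]\in L$ and $L$ is closed under intersection, $J^{(i)}:=\bigcap\{J\in L:i\in J\}$ lies in $\mcJ$ and contains $i$; picking $r$ distinct elements $f_1,\ldots,f_r$ with $s_\mcA(f_i)=J^{(i)}$ is possible because the number of $i'$ with $J^{(i')}=J^{(i)}$ is at most $|J^{(i)}|<|J^{(i)}|+1$, and then Hall's condition $\bigcup_{i\in S}J^{(i)}\supseteq S$ makes $\{f_1,\ldots,f_r\}$ independent. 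Maximality: for each $i\in[r]$, the set $F_i=E-A_i$ is the disjoint union, over $I\in\mcJ$ with $i\notin I$, of the full groups $\{e^I_1,\ldots,e^I_{|I|+1}\}$, and each such group is a circuit in $M|F_i$ since its $|I|+1$ members all have support $I$ of size $|I|$; hence $M|F_i$ has no coloops, so $F_i$ is cyclic, and Lemma~\ref{lem:max} gives maximality. Finally $L_\mcA=L$: every support lies in $L$, so Theorem~\ref{thm:chraracerizeclosedsets} together with closure of $L$ under $\cup$ and $\cap$ gives $L_\mcA\subseteq L$; conversely, for each $I\in\mcJ$ the first $|I|$ elements of its group form an independent set $X$ with $|X|=|s_\mcA(X)|=|I|$ and $s_\mcA(X)=I$, so $I\in L_\mcA$, and closure of $L_\mcA$ under unions yields $L\subseteq L_\mcA$ because $L$ is generated by $\mcJ$ under $\cup$.

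For part~(2), fix $n\geq r$ and define $\mcB=(B_i:i\in[r])$ on $\{e_1,\ldots,e_n\}$ by $s_\mcB(e_i)=J^{(i)}$ for $i\leq r$ and $s_\mcB(e_i)=[r]$ for $r<i\leq n$. Hall's condition holds for any $S\subseteq[n]$ with $|S|\leq r$: if $S$ meets $\{r+1,\ldots,n\}$ then $\bigcup_{i\in S}s_\mcB(e_i)=[r]$, and otherwise $S\subseteq[r]$ and $\bigcup_{i\in S}J^{(i)}\supseteq S$; thus $M[\mcB]=U_{r,n}$. The inclusion $L_\mcB\subseteq L$ follows exactly as in part~(1). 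For the reverse, each join-irreducible $I$ of $L$ equals $J^{(i_0)}$ for some $i_0\in I$ (otherwise $\bigcup_{i'\in I}J^{(i')}$ would exhibit $I$ as a union of strictly smaller sets of $L$, contradicting join-irreducibility), and then the set $X=\{e_{i'}:i'\in I\}$ has $s_\mcB(X)=\bigcup_{i'\in I}J^{(i')}=I$ (using $J^{(i')}\subseteq I$ for $i'\in I$) with $|X|=|s_\mcB(X)|=|I|$ and is independent by Hall, so $I\in L_\mcB$; taking unions yields $L\subseteq L_\mcB$.

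The main obstacle is the maximality step in part~(1), and it is precisely where the ``$+1$'' in ``$|I|+1$ elements per join-irreducible'' does its work: with only $|I|$ copies, the lone element of a singleton join-irreducible $\{j\}$ would be a coloop in $M|F_i$ for each $i\neq j$, whereas the extra copy turns each group into a genuine circuit of $M$ that persists in every $M|F_i$ with $i\notin I$ and uniformly precludes coloops. The remaining verifications are then short applications of Hall's theorem, Theorem~\ref{thm:chraracerizeclosedsets}, and Lemma~\ref{lem:max}.
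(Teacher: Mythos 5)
Your proposal is correct and is essentially the paper's approach: part (2) is the paper's construction in different notation (prescribing $s_\mcB(i)=J^{(i)}$ for $i\le r$ and full support for the remaining $n-r$ elements is exactly what the paper's $I_0$-based definition of the $B_i$ produces), and part (1) is the paper's construction with the groups $X_I$ taken only over join-irreducibles rather than over all non-empty members of $L$. The only differences are cosmetic economies: you verify maximality via Lemma \ref{lem:max} and cyclicity of the complements $E(M)-A_i$ instead of the paper's direct matching argument, and you deduce $L=L_\mcA$ from Theorem \ref{thm:chraracerizeclosedsets} where the paper invokes Theorem \ref{thm:supportsandclosedsets} and Corollary \ref{cor:cyclicgivesclosed}; both routes are sound.
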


\begin{proof}
  To prove assertion (1), for each non-empty set $I\in L$, let $X_I$
  be a set of $|I|+1$ elements that is disjoint from all other such
  sets $X_J$.  For $i$ with $1\leq i\leq r$, let
  $$A_i = \bigcup_{I\in L\,:\,i\in I}X_I,$$ so the elements of $X_I$
  are in exactly $|I|$ of the sets $A_i$ (counting multiplicity; we
  may have $A_i=A_j$ even if $i\ne j$).  Let $\mcA = (A_i\,:\,i\in
  [r])$ and let $M$ be the matroid $M[\mcA]$ on
  $$E(M) =  \bigcup_{I\in L\,:\,I\ne\emptyset}X_I = \bigcup_{i=1}^r A_i.$$
  Thus, if $e\in X_I$, then $s_\mcA(e)=I$.  The presentation $\mcA$ of
  $M$ is maximal since, with $|X_I|>|I|$ and $s_{\mcA}(X_I)=I$, the
  set $X_I$ is dependent in $M$, yet if we adjoin any element of $X_I$
  to any set $A_j$ with $j\not\in I$, then the resulting set system
  $\mcA'$ has a matching of $X_I$, so $X_I$ is independent in
  $M[\mcA']$.  It now follows from Theorem
  \ref{thm:supportsandclosedsets} that $L\subseteq L_\mcA$.  Since $L$
  and $L_\mcA$ are sublattices of $2^{[r]}$ and $s_{\mcA}(e)\in L$ for
  all $e\in E(M)$ by construction, we get $s_{\mcA}(F)\in L$ for each
  cyclic flat $F$ of $M$, so Corollary \ref{cor:cyclicgivesclosed}
  gives $L_\mcA\subseteq L$.  Thus, $L_\mcA= L$.

  Figure \ref{fig:latticeconstruction} illustrates the proof of
  assertion (2).  Let $[n]$ be the ground set of $U_{r,n}$.  For $I\in
  L$, let $I_0$ be the (possibly empty) set of elements that occur
  first in $I$, that is, $$I_0 = I-\bigcup_{J\in L\,:\,J\subsetneq I}
  J.$$ Since $L$ is closed under intersection, for each $i\in [r]$,
  there is exactly one $I\in L$ with $i\in I_0$; using that $I$,
  set $$B_i =([n]-[r])\cup \bigcup_{J\in L\,:\,I\subseteq J}J_0.$$ By
  construction, $|\mcB|=r$ and $i\in B_i$, so $[r]$ is a basis of
  $M[\mcB]$.  Since $[n]-[r]\subseteq B_i$ for all $i\in[r]$, it
  follows that $M[\mcB]$ is the uniform matroid $U_{r,n}$.  For $i\in
  I_0$ and $j\in J_0$, we have $i \in B_j$ if and only if $J\subseteq
  I$, so $s_\mcB(i)=I$.  Since $L$ is closed under unions, we get
  $s_\mcB(X)\in L$ for all $X\subseteq [r]$.  Also, each set $I\in L$
  is independent in $U_{r,n}$ and $s_\mcB(I)=I$. From these
  observations and Theorem \ref{thm:chraracerizeclosedsets}, we get
  $L= L_\mcB$.
\end{proof}

\begin{figure}
  \centering
  \begin{tikzpicture}[scale=0.8]
  \node[inner sep = 0.3mm] (em) at (1,0) {\small $\emptyset$};
  \node[inner sep = 0.3mm]  (1) at (1,1) {\small $\{1\}$};
  \node[inner sep = 0.3mm] (123) at (0,2) {\small $\{1,2,3\}$};
  \node[inner sep = 0.3mm] (145) at (2,2) {\small $\{1,4,5\}$};
  \node[inner sep = 0.3mm] (15) at (1,3) {\small $\{1,2,3,4,5\}$};
  \node[inner sep = 0.3mm] (16) at (1,4) {\small $\{1,2, 3, 4, 5, 6\}$};
  \foreach \from/\to in {em/1,1/123,1/145,123/15,145/15,15/16}
  \draw(\from)--(\to);

  \node[inner sep = 0.3mm] (xem) at (6,0) {\small $\emptyset$};
  \node[inner sep = 0.3mm]  (x1) at (6,1) {\small $\{1\}$};
  \node[inner sep = 0.3mm] (x123) at (5,2) {\small $\{2,3\}$};
  \node[inner sep = 0.3mm] (x145) at (7,2) {\small $\{4,5\}$};
  \node[inner sep = 0.3mm] (x15) at (6,3) {\small $\emptyset$};
  \node[inner sep = 0.3mm] (x16) at (6,4) {\small $\{6\}$};
  \foreach \from/\to in {xem/x1,x1/x123,x1/x145,x123/x15,x145/x15,x15/x16}
  \draw(\from)--(\to);

  \end{tikzpicture}
  \caption{An example, for $U_{6,7}$, of the construction 
    of $\mcB$ in the proof of Theorem
    \ref{thm:alldistributivelattices}, with 
     $L$ on the left and the sets $I_0$ on the right.
     The presentation has $B_1=\{1,2,3,4,5,6,7\}$,
     $B_2=B_3=\{2,3,6,7\},$ \ $B_4=B_5=\{4,5,6,7\}$, \ and \
     $B_6=\{6,7\}$.}
  \label{fig:latticeconstruction}
  \end{figure}
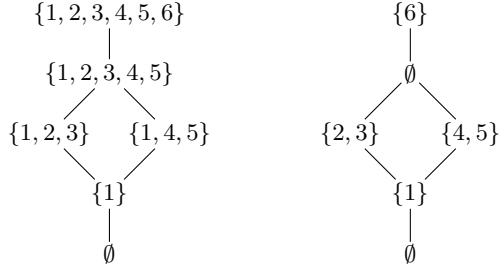

\subsection{Irreducible elements}

An element $a$ in a lattice $L$ is \emph{join-irreducible} if (i) $a$
is not the least element of $L$ and (ii) if $a=b\join c$, then
$a\in\{b,c\}$.  Dually, $a$ is \emph{meet-irreducible} if (i$'$) $a$
is not the greatest element of $L$ and (ii$'$) if $a=b\meet c$, then
$a\in\{b,c\}$.  (While not all authors include them, conditions (i)
and (i$'$) shorten the wording of results.)

The irreducible elements of a finite distributive lattice $L$ are of
great interest.  The order induced on the set of join-irreducibles of
$L$ is isomorphic to that induced on its set of meet-irreducibles, and
the lattice of order ideals of each of these induced suborders of $L$
is isomorphic to $L$ itself.  (See, e.g., \cite[Theorem II.2.5 and
Corollary II.2.7]{aigner}.)  Thus, the rank of $L$ is the number of
join-irreducibles in $L$, which is also its number of
meet-irreducibles.

We now study the irreducible elements of the lattices $L_\mcA$
introduced above.

The least set $S_i$ in $L_\mcA$ that contains a given element $i\in
[r]$ is $\bigcap_{J\in L_\mcA\,:\, i\in J}J$. The sets $S_i$ are not
limited to the atoms of $L_\mcA$; see the examples in Figure
\ref{fig:example}.  Clearly $S_i$ is join-irreducible. Each set $U$ in
$L_\mcA$ is $\bigcup_{i\in U}S_i$, so there are no other
join-irreducibles of $L_\mcA$.  Thus, the number of join-irreducibles
is the number of distinct sets $S_i$.  Note that if $A_i$ and $A_j$ in
$\mcA$ are equal, then $S_i=S_j$ since, for $X\subseteq E(M)$, we have
$i\in s_\mcA(X)$ if and only if $j\in s_\mcA(X)$.  Thus, the number of
join-irreducible sets in $L_\mcA$ is at most the number of distinct
sets in $\mcA$.  As Example 1 shows, this bound can be strict (there,
$\mcA$ has three distinct sets but $L_\mcA$ has only one
join-irreducible; likewise for $\mcB$).  

The greatest set in $L_\mcA$ that does not contain a given element
$i\in [r]$ is $\bigcup_{J\in L_\mcA\,:\, i\not\in J}J$.  An argument
like that above, or an application of order-duality, shows that these
are the meet-irreducibles of $L_\mcA$.  By the remark after the proof
of Theorem \ref{thm:chraracerizeclosedsets}, each meet-irreducible
element of $L_\mcA$ corresponds to a principal extension of $M$; the
converse is false, since for instance, in either example in Figure
\ref{fig:example}, the set $\{2,3\}$ corresponds to a principal
extension, but $\{2,3\}$ is the meet of the sets $\{1,2,3\}$ and
$\{2,3,4\}$ in $L_\mcA$.

We now identify a join-sublattice $L'_\mcA$ of $L_\mcA$ that, by
Theorem \ref{thm:chraracerizeclosedsets}, has the same the
meet-irreducibles, thereby reducing the problem of finding the
meet-irreducibles of $L_\mcA$ to the same problem on a potentially
smaller lattice.  Set
$$L'_\mcA=\{s_\mcA(X)\,:\,X\subseteq E(M),\, |s_\mcA(X)| = r(X)\}.$$
(Adding the condition that $X$ is independent would not change
$L'_\mcA$.)  By Theorem \ref{thm:chraracerizeclosedsets},
$L'_\mcA\subseteq L_\mcA$ and $L'_\mcA$ generates $L_\mcA$ since
$L_\mcA$ consists precisely of the intersections of the sets in
$L'_\mcA$.  Lemma \ref{lem:supportunion} shows that $L'_\mcA$ is a
join-sublattice of $L_\mcA$.

Each lattice is isomorphic to $L'_\mcA$
for a maximal presentation $\mcA$ of some transversal matroid (see the
proof of \cite[Theorem 2.1]{cyclic}).  By Corollary
\ref{cor:cyclicgivesclosed}, when the presentation $\mcA$ is maximal,
the same conclusions hold for the (often smaller) lattice
$$L''_\mcA=\{s_\mcA(X)\,:\,X\text{ is a cyclic flat of } M\}\cup [r].$$

\section{Applications}\label{sec:applic}

Theorems \ref{thm:threequarters} and \ref{thm:intbd} below are
applications of the results in Section \ref{sec:lattice}.  Both
results stem from the observation that proper sublattices of $2^{[r]}$
must be substantially smaller than $2^{[r]}$.  (The special case of
maximal proper sublattices of $2^{[r]}$ have been studied in other
settings, such as finite topologies; see, e.g., Sharp \cite{sharp} and
Stephen \cite{stephen}.)

\begin{thm}\label{thm:threequarters}
  Let $M$ be a transversal matroid of rank $r$, and let $\mcA^i$ be a
  presentation of $M$ that has rank $i$ in the ordered set of
  presentations of $M$.  If $1\leq i<r$, then
  $$|T_{\mcA^i}|=|L_{\mcA^i}| 
  \leq \bigl(\frac{1}{2}+ \frac{1}{2^{i+1}}\bigr) 2^r;$$ these bounds
  are sharp. Also, if $i\geq r$, then $|T_{\mcA^i}| =|L_{\mcA^i}| \leq
  2^{r-1}$.
\end{thm}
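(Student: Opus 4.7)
The plan is to bound $|L_\mcA|$ by a one-coordinate decomposition. For any $k\in[r]$, the sets $L_\mcA^0=\{I\in L_\mcA:k\notin I\}$ and $L_\mcA^1=\{I\in L_\mcA:k\in I\}$ are both sublattices of $L_\mcA$: the former is contained in $2^{[r]-\{k\}}$, giving $|L_\mcA^0|\leq 2^{r-1}$, while the latter is the principal up-set above its least element $S_k=\bigcap\{J\in L_\mcA:k\in J\}$, giving $|L_\mcA^1|\leq 2^{r-|S_k|}$. Hence $|L_\mcA|\leq 2^{r-1}+2^{r-|S_k|}$ for every $k\in[r]$, and the target bound $2^{r-1}+2^{r-i-1}$ follows once we exhibit some $k$ with $|S_k|\geq i+1$.

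The key claim is thus: if $\mcA$ has rank $i\geq 1$ in the presentation order, then some $k\in[r]$ has $|S_k|\geq\min(r,i+1)$. I would proceed by induction on $i$. The base case $i=1$ is Theorem \ref{thm:charmin}: a non-minimal $\mcA$ has $L_\mcA\neq 2^{[r]}$, so some singleton $\{k\}\notin L_\mcA$, forcing $|S_k|\geq 2$. For the inductive step, choose $\mcA'\cov\mcA$ of rank $i-1$ (by Corollary \ref{cor:presgraded} such a covering exists); then $L_\mcA\subseteq L_{\mcA'}$ by Theorem \ref{thm:altorder}, and the covering is realized by adjoining a single element $e$ to some set $A_{j_0}$. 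The inductive hypothesis supplies $k_0$ whose least containing set in $L_{\mcA'}$ has size at least $i$; if that set fails to lie in $L_\mcA$, its $L_\mcA$-analogue is strictly larger and we are done. Otherwise, Theorem \ref{thm:supportsandclosedsets}(2) provides the explicit set $s_{\mcA'}(e)\in L_{\mcA'}\setminus L_\mcA$, which, together with Corollary \ref{cor:smallerlattice} and the Brualdi--Dinolt identity $d_j=|A_j-C_j|$, forces some join-irreducible of $L_\mcA$ to have grown past its $L_{\mcA'}$-counterpart.

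For $i\geq r$, the one-coordinate bound yields only $|L_\mcA|\leq 2^{r-1}+1$, so a sharper argument is needed. The key claim gives some $k$ with $S_k=[r]$, whence $L_\mcA^1=\{[r]\}$. Two subcases: if $d_k\geq 1$, then $[r]-\{k\}\notin L_\mcA$, so $L_\mcA^0$ is a proper sublattice of $2^{[r]-\{k\}}$ and $|L_\mcA^0|\leq 2^{r-2}$; if $d_k=0$, then $S_k=[r]$ forces $d_j\geq 1$ for every $j\neq k$, and the constraint $\sum_j d_j\geq r$ yields some $j_0\neq k$ with $d_{j_0}\geq 2$, whence a dropping argument on the presentation of $M\del A_{j_0}$ produces a subset of $[r]-\{k\}$ missing from $L_\mcA$, again reducing $|L_\mcA^0|$ below $2^{r-1}$. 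Either way $|L_\mcA|\leq 2^{r-1}$. Sharpness for $1\leq i<r$ is witnessed by the sublattice $L=\{I\subseteq[r]:k\notin I\}\cup\{I\subseteq[r]:[k]\subseteq I\}$ with $k=i+1$, which has exactly $2^{r-1}+2^{r-i-1}$ elements; by Theorem \ref{thm:alldistributivelattices} it realizes as $L_\mcA$ for some presentation $\mcA$, and Corollary \ref{cor:presgraded} confirms that its rank as a presentation is $i$. The principal obstacle will be the inductive step of the key claim: a cover $\mcA'\cov\mcA$ need not produce a strict inclusion $L_\mcA\subsetneq L_{\mcA'}$, so identifying which join-irreducible of $L_\mcA$ must strictly enlarge at each stage requires a delicate structural analysis of the excluded set $s_{\mcA'}(e)$ and its interaction with the existing join-irreducibles.
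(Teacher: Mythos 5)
Your key claim is false, and with it the whole inductive engine of the proposal collapses. Take $M=U_{4,8}$ on $[8]$ with the presentation $\mcB=(B_1,B_2,B_3,B_4)$, where $B_1=\{1,5,6,7,8\}$, $B_2=\{1,2,5,6,7,8\}$, $B_3=\{3,5,6,7,8\}$, $B_4=\{3,4,5,6,7,8\}$ (this is exactly the construction in part (2) of Theorem \ref{thm:alldistributivelattices} applied to the lattice $L_V$ of Lemma \ref{lem:classifylattices}). One checks by Hall's condition that $M[\mcB]=U_{4,8}$, and Corollary \ref{cor:presgraded} gives its rank in the presentation order as $4\cdot 3-(3+2+3+2)=2$, so $i=2$. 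Since every element whose support contains $1$ also has $2$ in its support, and likewise for $3$ and $4$, Theorem \ref{thm:chraracerizeclosedsets} gives $L_\mcB=2^{[4]}-\bigl([\{1\},\overline{\{2\}}]\cup[\{3\},\overline{\{4\}}]\bigr)=L_V$, whose least closed sets containing $1,2,3,4$ are $\{1,2\},\{2\},\{3,4\},\{4\}$. So every $S_k$ has size at most $2$, while your claim at $i=2$ demands some $|S_k|\geq 3$. Consequently your one-coordinate bound $2^{r-1}+2^{r-\max_k|S_k|}$ yields only $\frac{3}{4}\cdot 2^r$, weaker than the required $\frac{5}{8}\cdot 2^r$; the theorem survives here only because $|L_V|=\frac{9}{16}\cdot 2^r$, i.e., the lattice has two shallow defects whose combined effect a single-coordinate decomposition cannot detect. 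The same defect sinks your $i\geq r$ argument, which opens with ``the key claim gives some $k$ with $S_k=[r]$.'' (Your base case $i=1$ is fine and does recover the $\frac{3}{4}\cdot 2^r$ bound, but the induction cannot be repaired as stated because the statement being inducted on is simply not true from $i=2$ on.)

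This is precisely why the paper does not try to locate one deep join-irreducible: it instead classifies \emph{all} sublattices of $2^{[r]}$ with more than $2^{r-1}$ elements (Lemma \ref{lem:classifylattices}, via the Chen--Koh--Tan/Rival description of maximal sublattices of a finite distributive lattice), finding the chains-of-intervals lattices $L_i$, $L'_i$ and the exceptional $L_V$, and then inducts along a covering chain $\mcA^0\cov\cdots\cov\mcA^j$ of presentations, using Theorem \ref{thm:supportsandclosedsets}, Corollary \ref{cor:smallerlattice} and Lemma \ref{lem:samenumber} to show that each of these large lattices can occur only at rank $j\leq i$; the $L_V$ case, the one your method misses, needs its own two-step argument there. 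Separately, your sharpness argument has a gap: Theorem \ref{thm:alldistributivelattices} produces \emph{some} presentation realizing the lattice $\{I:k\notin I\}\cup\{I:[k]\subseteq I\}$, but it gives no control over that presentation's rank in the presentation order (the same lattice can be realized at several ranks), so ``Corollary \ref{cor:presgraded} confirms that its rank is $i$'' must actually be computed for the specific construction; the paper avoids this by exhibiting an explicit rank-$i$ presentation, namely $M=N\oplus U_{1,1}$ with the coloop adjoined to $i$ of the sets of a minimal presentation of $N$, which attains $\bigl(\frac{1}{2}+\frac{1}{2^{i+1}}\bigr)2^r$ exactly.
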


We first give examples to show that, for $1\leq i<r$, the bounds are
sharp.  (These examples, which play a role in the proof of the bound,
have coloops; to get examples without coloops, take free extensions of
these.)  Let $\mcB =(B_2,B_3,\ldots,B_r)$ be a minimal presentation of
a transversal matroid $N$ of rank $r-1$.  Fix an element $e\not\in
E(M)$ and let $M$ be the direct sum of $N$ and the rank-$1$ matroid on
$\{e\}$.  For $0\leq k< r$, define $\mcA^k = (A^k_i\,:i\in [r])$ by
$$A^k_i=\left\{
  \begin{array}{ll}
    \{e\}, &\mbox{ if } i=1,\\
    B_i\cup \{e\}, &\mbox{ if } 2\leq i\leq k+1,\\ 
    B_i, &\mbox{ otherwise. }\\ 
  \end{array}
\right.$$ Thus, $s_{\mcA^k}(e) = [k+1]$.  Each $\mcA^k$ is a
presentation of $M$, the presentation $\mcA^0$ is minimal, and
$\mcA^{k-1}\cov\mcA^k$ for $k\geq 1$.  Thus, $\mcA^k$ has rank $k$ in
the ordered set of presentations.  Since $\mcB$ is a minimal
presentation of $N$, each subset of $\{2,3,\ldots,r\}$ is in
$L_{\mcA^k}$.  Thus, since $s_{\mcA^k}(e) = [k+1]$, Corollary
\ref{cor:closedclosetoclosedgen} implies that all supersets of $[k+1]$
are in $L_{\mcA^k}$.  Since $1\in s_{\mcA^k}(X)$ if and only if $e\in
X$, by Theorem \ref{thm:chraracerizeclosedsets} the sets in
$L_{\mcA^k}$ that contain $1$ must contain all of $[k+1]$.  Thus,
$L_{\mcA^k}$ consists of the subsets of $[r]$ that either do not
contain $1$ or contain all of $[k+1]$.  For reasons that Lemma
\ref{lem:classifylattices} will reveal, it is useful to recast this as
follows: $L_{\mcA^k}$ is the complement, in $2^{[r]}$, of the union of
the intervals $$[\{1\},\overline{\{2\}}],\,\,\,
[\{1,2\},\overline{\{3\}}],\,\,\,
[\{1,2,3\},\overline{\{4\}}],\,\,\,\ldots,\,\,\,
[\{1,2,\ldots,k\},\overline{\{k+1\}}],$$ where $\overline{X}$ denotes
the complement of the set $X$.  From the first description of
$L_{\mcA^k}$, we get $$|L_{\mcA^k}| = 2^{r-1}+2^{r-(k+1)} =
\bigl(\frac{1}{2}+ \frac{1}{2^{k+1}}\bigr) 2^r.$$

The proof of the bound in Theorem \ref{thm:threequarters} uses Lemma
\ref{lem:classifylattices}, which catalogs the sublattices of
$2^{[r]}$ that have more than $2^{r-1}$ elements.  The proof of that
lemma uses the following result by Chen, Koh, and Tan \cite{ckt} (see
the proof in Rival \cite{rival}).

\begin{lemma}\label{lem:ckt}
  Let $\mathcal{J}$ be the set of join-irreducibles of a finite
  distributive lattice $L$, and $\mathcal{M}$ its set of
  meet-irreducibles.  The maximal proper sublattices of $L$ are
  precisely the differences $L-[a,b]$ where the interval $[a,b]$ in
  $L$ satisfies $[a,b]\cap \mathcal{J} = \{a\}$ and $[a,b]\cap
  \mathcal{M} = \{b\}$.
\end{lemma}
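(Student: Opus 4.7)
The plan is to prove both directions of the "precisely" in Lemma \ref{lem:ckt} separately: (I) every $L-[a,b]$ of the stated form is a maximal proper sublattice (sufficiency), and (II) every maximal proper sublattice arises this way (necessity). Throughout I use two standard facts about finite distributive lattices: every element equals both the join of the join-irreducibles beneath it and the meet of the meet-irreducibles above it, and distributivity upgrades irreducibility to the implication $a \leq x \vee y \Rightarrow a \leq x$ or $a \leq y$ for $a \in \mathcal{J}$ (via $a = a \wedge (x \vee y) = (a \wedge x) \vee (a \wedge y)$), with the dual implication $b \geq x \wedge y \Rightarrow b \geq x$ or $b \geq y$ for $b \in \mathcal{M}$.

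For (I), set $L' = L - [a,b]$. I first verify $L'$ is a sublattice: if $x, y \in L'$ but $x \vee y \in [a,b]$, then $a \leq x \vee y$ forces $a \leq x$ (say), and combining this with $x \leq x \vee y \leq b$ places $x$ inside $[a,b]$, a contradiction; the meet case is dual. For maximality, the claim is that adjoining any $c \in [a,b]$ recovers $L$. Put
\begin{equation*}
  m^* = \bigwedge\{m \in \mathcal{M} : m \geq a,\ m \neq b\} \quad \text{and} \quad j^* = \bigvee\{j \in \mathcal{J} : j \leq b,\ j \neq a\}.
\end{equation*}
Each meet-irreducible $m$ defining $m^*$ lies in $L'$, since $m \geq a$ together with $m \leq b$ and $m \in \mathcal{M}$ would force $m \in [a,b] \cap \mathcal{M} = \{b\}$; because $L'$ is closed under meets, $m^* \in L'$, and similarly $j^* \in L'$. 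The identity $c \wedge m^* = a$ follows from $c \leq b$ and $b \wedge m^* = \bigwedge\{m \in \mathcal{M} : m \geq a\} = a$, combined with $c \wedge m^* \geq a$; dually, $c \vee j^* = b$. Having recovered $a$ and $b$, each $d \in [a,b]$ is expressed as $d = a \vee \bigvee\{j \in \mathcal{J} : j \leq d,\ j \neq a\}$ with every join-irreducible on the right-hand side lying in $L'$, so the sublattice generated by $L' \cup \{c\}$ contains $[a,b]$ and thus equals $L$.

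For (II), let $L'$ be a maximal proper sublattice and set $S = L - L'$. The structural claim is that $S = [a,b]$ with $a = \bigwedge S$ and $b = \bigvee S$. In a finite distributive lattice, the sublattice $\langle L' \cup \{c\}\rangle$ generated by adjoining one element $c$ admits the normal form $\{(c \wedge u) \vee v : u, v \in L'\}$ (obtained by putting any polynomial in disjunctive normal form and collapsing with distributivity); by maximality this set equals $L$ for every $c \in S$. Using this normal form, I would show $S$ is closed under $\wedge$ and $\vee$ and is convex in $L$; each potential failure of closure or convexity can be converted into a proper sublattice of $L$ strictly containing $L'$, contradicting maximality. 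Once $S = [a,b]$, join-irreducibility of $a$ is immediate: $a = a_1 \vee a_2$ with $a_1, a_2 < a$ would have $a_1, a_2 \in L'$ (both being strictly below the minimum of $S$), forcing $a = a_1 \vee a_2 \in L'$; $b \in \mathcal{M}$ is dual. For the interior conditions, I would argue by contradiction: a second join-irreducible $j \in (a,b] \cap \mathcal{J}$ would, by a combinatorial argument inside $[a,b]$, yield a strictly smaller interval $[j, b']$ satisfying the hypotheses of (I), and applying (I) would exhibit $L - [j,b']$ as a sublattice strictly between $L'$ and $L$, again contradicting maximality; the meet-irreducible statement is dual.

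The main obstacle is part (II), specifically the proof that $S$ is a convex sublattice and the verification of the interior irreducibility conditions. Both rely on the explicit normal form for one-element extensions of a sublattice in a distributive lattice, coupled with careful bookkeeping to transform each potential failure into a contradiction with maximality. Part (I), by comparison, is a clean computation once $m^*$ and $j^*$ are identified.
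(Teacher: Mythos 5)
The paper does not prove this lemma at all: it is quoted from Chen--Koh--Tan with the proof attributed to Rival, so there is no internal argument to measure yours against. Judged on its own terms, your proposal establishes only one of the two directions. Part (I), sufficiency, is correct and essentially complete: distributivity does upgrade join-irreducibility of $a$ to ``$a\leq x\join y$ implies $a\leq x$ or $a\leq y$,'' which shows $L-[a,b]$ is a sublattice; the elements $m^*$ and $j^*$ lie in $L-[a,b]$ and recover $a$ and $b$ from any adjoined $c\in[a,b]$; and the decomposition $d=a\join\bigvee\{j\in\mathcal{J}:j\leq d,\ j\neq a\}$, whose non-$a$ terms all avoid $[a,b]$ precisely because $[a,b]\cap\mathcal{J}=\{a\}$, then fills in the rest of the interval.

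Part (II), necessity, is where the substance of the lemma lies, and there you have a plan rather than a proof. Three claims are asserted without argument: that $S=L-L'$ is closed under $\meet$ and $\join$; that $S$ is convex, so that $S=[a,b]$; and that $[a,b]$ contains no join-irreducible besides $a$ and no meet-irreducible besides $b$. The phrases ``I would show'' and ``by a combinatorial argument inside $[a,b]$'' mark exactly the steps that carry the difficulty. In particular, converting a failure of closure or convexity of $S$ into a sublattice strictly between $L'$ and $L$ requires actually exhibiting that sublattice, and the final step requires producing, from a hypothetical second join-irreducible $j\in(a,b]$, a meet-irreducible $b'$ with $[j,b']$ satisfying the hypotheses of (I); neither construction is routine, and neither is given. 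A secondary issue: your argument that $a=\bigwedge S$ is join-irreducible is vacuous when $a$ is the least element of $L$, which the paper's convention excludes from $\mathcal{J}$ (consider $L'=L-\{\hat{0}\}$ when $L$ has a unique atom), so the necessity direction also needs an explicit check of that degenerate case. As written, the proposal shows that the stated intervals yield maximal proper sublattices, but not that they yield all of them.
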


\begin{lemma}\label{lem:classifylattices}
  Up to permutations of $[r]$, the sublattices of $2^{[r]}$ that have
  more than $2^{r-1}$ elements are $L_i = 2^{[r]}-U_i$ and
  $L'_i=2^{[r]}-U'_i$, for $1\leq i<r$, where
  $$U_i = \bigcup_{j\,:\, 1\leq j\leq i}  [\{1,2,\ldots,j\},\overline{\{j+1\}}]
  \quad \text{ and } \quad U'_i = \bigcup_{j\,:\,1\leq j\leq i}
  [\{j+1\},\overline{\{1,2,\ldots,j\}}],$$ and $L_V=2^{[r]}-V$ where
  $V \,= \,[\{1\},\overline{\{2\}}]\,\cup\, [\{3\},\overline{\{4\}}]$.
  Thus, $|L_i|=|L'_i|= \bigl(\frac{1}{2}+ \frac{1}{2^{i+1}}\bigr) 2^r$
  and $|L_V|=\frac{9}{16}\cdot 2^r$.  Also, $L_V$ is not contained in
  any sublattice $L$ of $2^{[r]}$ with $|L|=\frac{5}{8}\cdot 2^r$.
\end{lemma}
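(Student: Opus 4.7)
The plan is to apply Lemma \ref{lem:ckt} to identify the maximal proper sublattices of $2^{[r]}$, show that every sublattice under consideration is an intersection of such maximal ones, and then classify those intersections of size exceeding $2^{r-1}$. The join-irreducibles of $2^{[r]}$ are the singletons $\{i\}$ and the meet-irreducibles are the complements $\overline{\{j\}}$, so the interval in Lemma \ref{lem:ckt} must have the form $[\{i\},\overline{\{j\}}]$ with $i\neq j$; hence the maximal proper sublattices of $2^{[r]}$ are the sets $M_{ij}:=2^{[r]}-[\{i\},\overline{\{j\}}]$ for distinct $i,j\in[r]$, each of cardinality $3\cdot 2^{r-2}$.

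Let $L$ be a sublattice of $2^{[r]}$ with $|L|>2^{r-1}$. Because $L\subseteq[\bigwedge L,\bigvee L]$ in $2^{[r]}$ and the latter has $2^{|\bigvee L-\bigwedge L|}$ elements, the hypothesis forces $\bigwedge L=\emptyset$ and $\bigvee L=[r]$. I would next show that $L=\bigcap\{M_{ij}:L\subseteq M_{ij}\}$: for $X\in 2^{[r]}-L$ with $X\notin\{\emptyset,[r]\}$, if $L\not\subseteq M_{ij}$ for every $(i,j)$ with $i\in X$ and $j\notin X$, choose $Y_{ij}\in L$ with $i\in Y_{ij}$ and $j\notin Y_{ij}$, and set $Z_i=\bigcap_{j\notin X}Y_{ij}\in L$; then $i\in Z_i\subseteq X$, so $X=\bigcup_{i\in X}Z_i$ lies in $L$, a contradiction. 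Setting $P(L)=\{(i,j):L\subseteq M_{ij}\}$ and $U=\bigcup_{(i,j)\in P(L)}[\{i\},\overline{\{j\}}]$, the condition $|L|>2^{r-1}$ is equivalent to $|U|<2^{r-1}$.

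The heart of the proof is a combinatorial case analysis of $P(L)$. If two pairs $(i_1,j_1),(i_2,j_2)\in P(L)$ satisfy $\{i_1,i_2\}\cap\{j_1,j_2\}\neq\emptyset$, the corresponding intervals are disjoint and $|U|\geq 2\cdot 2^{r-2}=2^{r-1}$; hence the source set $S=\{i:(i,\cdot)\in P(L)\}$ and sink set $T=\{j:(\cdot,j)\in P(L)\}$ are disjoint. Writing $s=|S|$, $t=|T|$, and $N(S_0)=\bigcup_{i\in S_0}\{j:(i,j)\in P(L)\}$, restricting to coordinates in $S\cup T$ and applying inclusion-exclusion gives
\[|U|=2^{r-s-t}\Bigl(2^{s+t}-\sum_{S_0\subseteq S}2^{\,t-|N(S_0)|}\Bigr),\]
so $|U|<2^{r-1}$ is equivalent to $\sum_{S_0\subseteq S}2^{-|N(S_0)|}>2^{s-1}$. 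When $s=1$ this holds automatically, and the no-isolated condition forces $P(L)=\{1\}\times T$, producing $L_t$ after permutation of $[r]$; dually $t=1$ produces $L'_s$. For $s,t\geq 2$ I would show that the inequality forces $(s,t)=(2,2)$ with $P(L)$ a two-edge matching (giving $L_V$). With $a_k=|N(i_k)|$, direct computation handles $(s,t)=(2,2)$: only $(a_1,a_2)=(1,1)$ with disjoint neighborhoods satisfies $2^{-a_1}+2^{-a_2}+1/4>1$. For $s=2$, $t\geq 3$, the estimates $a_k\geq 1$ and $a_1+a_2\geq t$ force $\max(a_1,a_2)\geq 2$, giving $2^{-a_1}+2^{-a_2}+2^{-t}\leq 3/4+2^{-t}<1$; the case $s\geq 3$, $t\geq 2$ is ruled out by the analogous inequality obtained by swapping $S$ and $T$, together with a direct check for $s=t\geq 3$. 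Carrying out this case analysis uniformly over all bipartite structures on $S\cup T$ is the main obstacle.

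Finally, for the last assertion: by the classification, any sublattice $L$ with $|L|=\tfrac{5}{8}\cdot 2^r$ is, up to permutation of $[r]$, either $L_2$ or $L'_2$, whose associated family $P(L)$ consists of two pairs sharing a first (respectively second) coordinate. The containment $L_V\subseteq L$ forces $P(L)\subseteq P(L_V)=\{(1,2),(3,4)\}$, but the two pairs of $P(L_V)$ have distinct first \emph{and} distinct second coordinates, so no two-element family sharing a coordinate can be contained in $P(L_V)$. Hence no such $L$ contains $L_V$.
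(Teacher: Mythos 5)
Your overall strategy is sound and genuinely different from the paper's (the paper applies Lemma \ref{lem:ckt} recursively, descending $2^{[r]}\supset L_1\supset L_2\supset\cdots$ and tracking join- and meet-irreducibles at each stage, whereas you reduce once and for all to the observation that any sublattice $L$ with $|L|>2^{r-1}$ contains $\emptyset$ and $[r]$ and is the intersection of the maximal sublattices $M_{ij}=2^{[r]}-[\{i\},\overline{\{j\}}]$ containing it, and then count). The reduction to the pair set $P(L)$, the counting identity for $|U|$, the disjointness argument forcing $S\cap T=\emptyset$, the cases $s=1$, $t=1$, $(s,t)=(2,2)$, $(2,t\ge 3)$ and, by the source/sink symmetry, $(s\ge 3,2)$, and the final argument about $L_V$ and sublattices of size $\frac58\cdot 2^r$ are all correct.

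The genuine gap is exactly where you flag it: the case $s\ge 3$ and $t\ge 3$ is not proved. Swapping $S$ and $T$ does not reduce it to an earlier case, and ``a direct check for $s=t\ge 3$'' is neither an argument (there are many bipartite digraphs on given $S\cup T$, not one) nor does it cover $s\ne t$, e.g.\ $s=3$, $t=5$. Since this is precisely the content of the classification (that nothing beyond one star or a two-edge matching survives), the proof is incomplete as written. A clean way to close it: your condition $|U|<2^{r-1}$ says that more than half of the subsets $Y$ of $S\cup T$ are ``closed'' (i.e.\ $i\in Y$ implies $j\in Y$ for every $(i,j)\in P(L)$); this fraction is multiplicative over the connected components of the bipartite graph on $S\cup T$, and any connected component with at least two vertices on each side contains a three-edge path $i_1\to j_1\leftarrow i_2\to j_2$, whose three constraints alone already cut the fraction on that component to exactly $\tfrac12$. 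Hence every component must be a star (fraction $\tfrac12+2^{-(k+1)}$ for a star with $k$ leaves, in either orientation), and since each component with an edge has fraction at most $\tfrac34$, the product exceeds $\tfrac12$ only for a single star (your $s=1$ or $t=1$ cases, giving $L_i$, $L'_i$) or two single-edge components (giving $L_V$). Inserting this (or an equivalent induction on edges) where you currently have the case analysis would complete the argument; you should also record the easy verification that the $s=1$ outcome $\bigcup_{j\in T}[\{1\},\overline{\{j\}}]$ coincides with the paper's nested union $U_t$.
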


\begin{proof}
  To prove this result, we apply Lemma \ref{lem:ckt} recursively.  To
  simplify the argument, note that $U'_i$ is the image of $U_i$ under
  the complementation map $X\mapsto \overline{X}$ (which is
  order-reversing) of $2^{[r]}$; this allows us to pursue only the
  lattices $L_V$ and $L_1,L_2,\ldots,L_{r-1}$ below.
 
  The join-irreducibles of $2^{[r]}$ are the singleton sets, and the
  meet-irreducibles are their complements, so by Lemma \ref{lem:ckt},
  the maximal proper sublattices of $2^{[r]}$ are $L_1$ and its images
  under permutations of $[r]$ (the lattice $L'_1$ is obtained by such
  a permutation).

  To verify the assertions below about join-irreducibles, note that
  (i) each join-irreducible of $L_{i-1}$ that is also in $L_i$ is
  join-irreducible in $L_i$, and (ii) $L_i$ has at most $r$
  join-irreducibles.  (The second statement holds since the rank of a
  distributive lattice is its number of join-irreducibles; see
  \cite[Corollary II.2.11]{aigner}.)  Similar observations apply to
  meet-irreducibles.

  We now find the maximal proper sublattices of $L_1 = 2^{[r]} -
  [\{1\},\overline{\{2\}}]$.  Its join-irreducibles are $\{i\}$, for
  $2\leq i\leq r$, along with $\{1,2\}$; its meet-irreducibles are
  $\overline{\{i\}}$, for $i\in [r]-\{2\}$, along with
  $\overline{\{1,2\}}$.  Up to the map $X\mapsto \overline{X}$ (which
  maps $L_2$ to $L'_2$) and permuting $3, 4, \ldots, r$, there are
  three maximal proper sublattices, namely
  \begin{enumerate}
  \item[(1)] $L_2=L_1 - [\{1,2\},\overline{\{3\}}]$, which has
    $\frac{5}{8}\cdot 2^r$ elements,
  \item[(2)] $L_V=L_1-[\{3\},\overline{\{4\}}]$, which has
    $\frac{9}{16}\cdot 2^r$ elements, and
  \item[(3)] $L_1-[\{2\},\overline{\{1\}}]$, which has $2^{r-1}$
    elements.
  \end{enumerate}
  (The join-irreducible $\{1,2\}$ is in $[\{2\},\overline{\{3\}}]$, so
  this interval is not listed.  Likewise for $\overline{\{1,2\}}$ and
  $[\{3\},\overline{\{1\}}]$.)  Only $L_2$ and $L_V$ are of interest
  for the lemma.

  The join-irreducibles of $L_V$ are $\{i\}$, for $i\in [r]-\{1,3\}$,
  along with $\{1,2\}$ and $\{3,4\}$; its meet-irreducibles are
  $\overline{\{j\}}$, for $j\in [r]-\{2,4\}$, along with
  $\overline{\{1,2\}}$ and $\overline{\{3,4\}}$.  Up to switching the
  pair $(1,2)$ with the pair $(3,4)$, permuting $5, 6, \ldots, r$, and
  the map $X\mapsto \overline{X}$, there are three maximal proper
  sublattices of $L_V$ (omitting the case covered by (3) above):
  \begin{enumerate}
  \item[(4)] $L_V - [\{1,2\},\overline{\{3,4\}}]$, which has $2^{r-1}$
    elements,
  \item[(5)] $L_V-[\{1,2\},\overline{\{5\}}]$, which has
    $\frac{15}{32}\cdot 2^r$ elements, and
  \item[(6)] $L_V-[\{5\},\overline{\{6\}}]$, which has
    $\frac{27}{64}\cdot 2^r$ elements.
  \end{enumerate}
  Thus, no proper sublattices of $L_V$ have more than $2^{r-1}$
  elements.

  To complete the proof, we induct to show that for $i$ with $3\leq i<
  r$, the only maximal proper sublattice $L$ of $L_{i-1}$ with
  $|L|>2^{r-1}$ is $L_i$, up to permuting elements.  We include the
  following conditions in the induction argument (see Figure
  \ref{fig:irreducibles}):
  \begin{enumerate}
  \item[(i)] the join-irreducibles of $L_{i-1}$ are $\{j\}$, for $1<
    j\leq r$, along with $[i]$, and
  \item[(ii)] the meet-irreducibles of $L_{i-1}$ are
    $\overline{\{1\}}$ and $\overline{\{k\}}$, for $i<k\leq r$, along
    with $\overline{\{1,t\}}$ where $2\leq t\leq i$.
  \end{enumerate}
  Conditions (i) and (ii) are easy to see in the base case, $i=3$.  We
  use the same argument for the base case as for the inductive step.
  Let $L$ be a maximal proper sublattice of $L_{i-1}$.  If $L=L_{i-1}-
  [A,B]$ where $|A|=1$ and $B=\overline{\{1,t\}}$ with $2\leq t\leq
  i$, then $[A,B]$ is disjoint from $U_{i-1}$ and has $2^{r-3}$
  elements, so $|L| \leq 2^{r-1}$.  If $L = L_{i-1} -
  [\{j\},\overline{\{k\}}]$, with $j$ and $k$ distinct elements of
  $\{i+1, i+2, \ldots, r\}$, then $|L|\leq \frac{15}{32}\cdot 2^r$ by
  case (5) (with relabelling).  Thus, up to relabelling, only $L_i
  =L_{i-1} - [\{1,2,\ldots,i\},\overline{\{i+1\}}]$ has more than
  $2^{r-1}$ elements: $|L_i| = \bigl(\frac{1}{2}+
  \frac{1}{2^{i+1}}\bigr) 2^r$ .  It is easy to check that conditions
  (i) and (ii) hold for $L_i$, which completes the induction.
\end{proof}

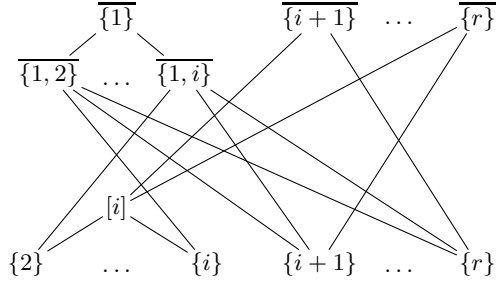
\begin{figure}
  \centering
  \begin{tikzpicture}[scale=0.6]
  \node[inner sep = 0.3mm] (2) at (-1,-0.5) {\small $\{2\}$};
  \node[inner sep = 0.3mm] (d) at (1,-0.65) {\small $\ldots$};
  \node[inner sep = 0.3mm]  (i-) at (3,-0.5) {\small $\{i\}$};
  \node[inner sep = 0.3mm]  (i+1-) at (5.5,-0.5) {\small $\{i+1\}$};
  \node[inner sep = 0.3mm] (d+) at (7.25,-0.65) {\small $\ldots$};
  \node[inner sep = 0.3mm]  (r-) at (9,-0.5) {\small $\{r\}$};
  \node[inner sep = 0.3mm] (i) at (1,0.75) {\small $[i]$};
  \node[inner sep = 0.3mm] (1i) at (2.5,3.75) {\small $\overline{\{1,i\}}$};
  \node[inner sep = 0.3mm] (dd) at (1,3.5) {\small $\ldots$};
  \node[inner sep = 0.3mm] (12) at (-0.5,3.75) {\small $\overline{\{1,2\}}$};
  \node[inner sep = 0.3mm] (1) at (1,5) {\small $\overline{\{1\}}$};
  \node[inner sep = 0.3mm]  (i+1t) at (5.5,5) {\small $\overline{\{i+1\}}$};
  \node[inner sep = 0.3mm] (d+t) at (7.25,4.85) {\small $\ldots$};
  \node[inner sep = 0.3mm]  (rt) at (9,5) {\small $\overline{\{r\}}$};
  \foreach \from/\to in
  {1/12,1/1i,i/2,i/i-,12/i-,1i/2,i/i+1t,i/rt,
    i+1-/12,i+1-/1i,r-/12,r-/1i,rt/i+1-,i+1t/r-}   
  \draw(\from)--(\to);
  \end{tikzpicture}
  \caption{The induced order on the irreducibles of $L_{i-1}$.}
  \label{fig:irreducibles}
\end{figure}

The last background item we need before proving the upper bounds in
Theorem \ref{thm:threequarters} is the following lemma from
\cite{extpres}.

\begin{lemma}\label{lem:samenumber}
  Let $\mcA$ be a presentation of $M$.  Fix $Y\subseteq E(M)$.  If
  $r(M\del Y) = r(M)$, then $M$ has a minimal presentation $\mcC$ with
  $\mcC\preceq \mcA$ so that $s_\mcC(e)= s_\mcA(e)$ for all $e\in Y$.
\end{lemma}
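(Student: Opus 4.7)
The plan is to induct on $d(\mcA)=r(r-1)-\sum_{i=1}^r r(M\del A_i)$, the rank of $\mcA$ in the presentation poset of $M$ (Corollary \ref{cor:presgraded}). If $d(\mcA)=0$, then $\mcA$ is minimal by Lemma \ref{lem:min} and we set $\mcC=\mcA$. Otherwise some $A_i$ is not a cocircuit, i.e., $r(M\del A_i)<r-1$, and the goal is to find a single element $e\in A_i\cap(E-Y)$ whose removal from $A_i$ yields a presentation $\mcA'\prec\mcA$ of $M$ with $A'_j\cap Y=A_j\cap Y$ for all $j$; applying the inductive hypothesis to $\mcA'$ and using transitivity $\mcC\preceq\mcA'\preceq\mcA$ will then complete the argument.

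I would choose $e$ using Hall's theorem: removing the edge $(e,i)$ from the bipartite presentation graph preserves $M$ if and only if no independent set $Z$ of $M$ satisfies $e\in Z$, $Z\cap A_i=\{e\}$, and $|s_\mcA(Z)|=|Z|$ (the tight-Hall condition). The hypothesis $r(M\del Y)=r$ produces a basis $B\subseteq E-Y$ of $M$, and an $\mcA$-matching $\phi\colon B\to[r]$ places an element $b_j\in A_j\cap(E-Y)$ at every index $j$. A short pigeonhole argument rules out the possibility that every element of $A_i\cap(E-Y)$ has support $\{i\}$: if that held, then for $j\neq i$, $b_j\in A_j$ could not lie in $A_i$ (else $s_\mcA(b_j)=\{i\}$ would contradict $j\in s_\mcA(b_j)$), so $B-\{b_i\}\subseteq E-A_i$ would give $r(E-A_i)\geq r-1$, contradicting the non-cocircuit condition. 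Hence some candidate $e\in A_i\cap(E-Y)$ has $|s_\mcA(e)|\geq 2$, which already rules out the tight-Hall obstruction at $|Z|=1$.

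The hardest step will be ruling out tight-Hall obstructions of larger size $|Z|\geq 2$ for some such candidate. For this I plan to use Lemma \ref{lem:supp1}(2), which recasts the tight condition $|s_\mcA(Z)|=r(Z)$ as the closure identity $\cl(Z)=\{f:s_\mcA(f)\subseteq s_\mcA(Z)\}$, together with a basis-exchange argument: any putative obstruction $Z$ for $e$, combined with the basis $B\subseteq E-Y$, should produce a modified basis of $M$ that is forced to contain an element of $Y$, contradicting $B\subseteq E-Y$ by an argument structurally parallel to the pigeonhole above but now applied to $Z$ and $\cl(Z)$ rather than to $A_i$ and $E-A_i$.
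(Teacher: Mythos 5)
You should first note that the paper does not prove Lemma \ref{lem:samenumber} at all: it is imported from \cite{extpres} as a background fact, so there is no in-paper proof to compare against and your argument must stand on its own. Your scaffolding is sound: the presentation poset is graded (Corollary \ref{cor:presgraded}), a downward cover deletes a single element from a single set, deleting an element outside $Y$ preserves $s_\mcA(e)$ for all $e\in Y$ as well as the hypothesis $r(M\del Y)=r(M)$, and your tight-Hall criterion for removability of the edge $(e,i)$ (no independent $Z$ with $e\in Z$, $Z\cap A_i=\{e\}$, $|s_\mcA(Z)|=|Z|$) is the correct one. The pigeonhole step is also correct and does produce some $e\in A_i-Y$ with $|s_\mcA(e)|\geq 2$ whenever $A_i$ is not a cocircuit.

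The gap is that this only excludes obstructions of size one, and the entire content of the lemma lies in the step you defer. Knowing $|s_\mcA(e)|\geq 2$ and $e\notin Y$ comes nowhere near removability: an obstruction $Z$ for $e$ can be contained entirely in $E(M)-Y$, since the tightness condition $|s_\mcA(Z)|=|Z|$ is a property of $Z$ and $\mcA$ alone and makes no reference to $Y$. (For instance, in any minimal presentation every element of every set has such an obstruction, including elements outside $Y$ with support of size at least two; so the properties you have secured for your candidate are compatible with non-removability.) Consequently, no contradiction with $B\subseteq E(M)-Y$ can be extracted from a single candidate's putative obstruction, which is what your final paragraph proposes; the ``modified basis forced to contain an element of $Y$'' cannot materialize when $Z$ itself avoids $Y$. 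What actually has to be shown is that \emph{not every} element of $A_i-Y$ admits an obstruction, and that requires a genuinely global argument --- e.g., comparing $\mcA$ with a minimal presentation $\mcC\preceq\mcA$ (every element of $A_i-C_i$ is removable, since any set system sandwiched between two presentations of $M$ is again a presentation of $M$) and then showing that the spanning hypothesis on $E(M)-Y$ forces some such removable element to lie outside $Y$. Your proposal neither carries out such an argument nor identifies the leverage that the hypotheses provide against obstructions of size at least two, so as written it does not establish the lemma.
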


\begin{proof}[Proof of Theorem \ref{thm:threequarters}.]
  Consider presentations $\mcA^0\cov\mcA^1\cov \cdots \cov \mcA^r$ of
  $M$ where $\mcA^0$ is minimal.  Thus, $\mcA^j$ has rank $j$ in the
  order on presentations, and $L_{\mcA^j}$ is a sublattice of
  $L_{\mcA^{j-1}}$.  By Lemma \ref{lem:classifylattices}, if
  $|L_{\mcA^j}| >2^{r-1}$, then $|L_{\mcA^j}|=\bigl(\frac{1}{2}+
  \frac{1}{2^{i+1}}\bigr) 2^r$ for some $i$ with $1\leq i<r$, so it
  suffices to prove the following statement:
  \begin{quote}
    \emph{if $\displaystyle{|L_{\mcA^j}| = \bigl(\frac{1}{2}+
      \frac{1}{2^{i+1}}\bigr) 2^r}$, then $j\leq i$.}
  \end{quote}

  For $i=1$, assume $|L_{\mcA^j}| = \frac{3}{4}\cdot 2^r$.  By Lemma
  \ref{lem:classifylattices}, up to permuting $[r]$, we have
  $L_{\mcA^j} = 2^{[r]} - [\{1\},\overline{\{2\}}]$.  Condition (2) of
  Corollary \ref{cor:smallerlattice} holds ($h$ is $1$), so
  $L_{\mcA^j}$ is properly contained in $L_{\mcA^{j-1}}$; since
  $L_{\mcA^j}$ is a proper sublattice only of $2^{[r]}$, we have
  $L_{\mcA^{j-1}} = 2^{[r]}$.  Thus, $\mcA^{j-1}$ is a minimal
  presentation by Theorem \ref{thm:charmin}, so $j-1=0$, so $j=1$.

  For $i=2$, if $|L_{\mcA^j}| = \frac{5}{8}\cdot 2^r$, then, by Lemma
  \ref{lem:classifylattices}, up to permuting $[r]$, the lattice
  $L_{\mcA^j}$ is either
  $$2^{[r]} -\bigl( [\{1\},\overline{\{2\}}] \cup
  [\{1,2\},\overline{\{3\}}]\bigr) \qquad \text{ or } \qquad 2^{[r]} -
  \bigl( [\{2\},\overline{\{1\}}] \cup
  [\{3\},\overline{\{1,2\}}]\bigr).$$ Condition (2) of Corollary
  \ref{cor:smallerlattice} holds ($h$ is $1$ in the first case and
  either $2$ or $3$ in the second), so $L_{\mcA^j}$ is properly
  contained in $L_{\mcA^{j-1}}$.  Thus, $|L_{\mcA^{j-1}}|\geq
  \frac{3}{4}\cdot 2^r$.  The previous case gives $j-1\leq 1$, so
  $j\leq 2$.
  
  The general case with $L_{\mcA^j} = L_i$ or $L_{\mcA^j} = L'_i$
  follows inductively in the same manner.  We turn to the only case
  that requires a more involved argument, namely $$L_{\mcA^j} = L_V =
  2^{[r]} - \bigl(\,[\{1\},\overline{\{2\}}]\,\cup\,
  [\{3\},\overline{\{4\}}]\,\bigr).$$ Since $\mcA^{j-1}\cov \mcA^j$,
  we have $s_{\mcA^{j-1}}(e) \subsetneq s_{\mcA^j}(e)$ for some $e\in
  E(M)$, so $s_{\mcA^{j-1}}(e) \not \in L_V$ by Theorem
  \ref{thm:supportsandclosedsets}. Thus, $s_{\mcA^{j-1}}(e) \in
  [\{1\},\overline{\{2\}}]\,\cup\, [\{3\},\overline{\{4\}}]$.  If
  $s_{\mcA^{j-1}}(e)$ is in only one of $[\{1\},\overline{\{2\}}]$ and
  $[\{3\},\overline{\{4\}}]$, then $L_{\mcA^j}$ is a proper sublattice
  of $L_{\mcA^{j-1}}$ by condition (1) of Corollary
  \ref{cor:smallerlattice}; thus, $|L_{\mcA^{j-1}}|\geq
  \frac{3}{4}\cdot 2^r$, so $j-1\leq 1$, so $j< 3$.  We may now assume
  that $L_{\mcA^j}= L_{\mcA^{j-1}}$ and that $s_{\mcA^{j-1}}(e) \in
  [\{1\},\overline{\{2\}}]\,\cap\, [\{3\},\overline{\{4\}}]$.

  First assume that for all options for the terms
  $\mcA^0,\mcA^1,\ldots,\mcA^{j-1}$, the only element $d$ with
  $s_{\mcA^j}(d)\ne s_{\mcA^k}(d)$ for some $k<j$ is $d=e$.  Lemma
  \ref{lem:samenumber} then implies that $e$ is a coloop of $M$; also,
  the presentation of $M\del e$ that is obtained by removing $e$ from
  all sets in $\mcA^0$ is minimal.  This case is covered by the
  example that we used to show that the bound is sharp, so we may now
  assume that $e$ is not a coloop of $M$.

  In this case, by Lemma \ref{lem:samenumber} with $J = \{e\}$, we can
  choose $\mcA^0,\mcA^1,\ldots,\mcA^{j-2}$ so that $s_{\mcA^{j-1}}(e)
  =s_{\mcA^{j-2}}(e)$.  Since $\mcA^{j-2}\cov\mcA^{j-1}$, we have
  $s_{\mcA^{j-2}}(e') \subsetneq s_{\mcA^{j-1}}(e')$ for some $e'\in
  E(M)$.  Thus, $e'\ne e$.  Now $s_{\mcA^{j-2}}(e') \not \in L_V$ by
  Theorem \ref{thm:supportsandclosedsets}, so $s_{\mcA^{j-2}}(e')$ is
  in either $[\{1\},\overline{\{2\}}]$ or $[\{3\},\overline{\{4\}}]$.
  If $s_{\mcA^{j-2}}(e')$ is not in both intervals, then the argument
  above gives the result, so assume $s_{\mcA^{j-2}}(e')\in
  [\{1\},\overline{\{2\}}]\,\cap\, [\{3\},\overline{\{4\}}]$.  Set $F
  = \{e,e'\}$.  Thus,
  $$s_{\mcA^{j-2}}(F) = s_{\mcA^{j-2}}(e)\cup s_{\mcA^{j-2}}(e') \in
  [\{1\},\overline{\{2\}}]\,\cap\, [\{3\},\overline{\{4\}}].$$
  Corollary \ref{cor:closedclosetoclosedgen} with $J=
  s_{\mcA^{j-2}}(F)-\{1,3\}$, and so $H = \{1,3\}$, gives
  $s_{\mcA^{j-2}}(F) \in L_{\mcA^{j-2}}$, so $L_{\mcA^j}$ is a proper
  sublattice of $L_{\mcA^{j-2}}$.  Lemma \ref{lem:classifylattices}
  gives $|L_{\mcA^{j-2}}|\geq \frac{3}{4}\cdot 2^r$; thus, $j-2\leq
  1$, so $j\leq 3$, as needed.
\end{proof}

Let $\mcA$ and $\mcB$ be presentations of $M$.  In Theorem
\ref{thm:LABunion} we showed that $T_\mcA\cap T_\mcB$ is a sublattice
of both $T_\mcA$ and $T_\mcB$.  The smallest that $|T_\mcA\cap
T_\mcB|$ can be is two, with these two common extensions being the
free extension and the extension by a loop; for instance, the two
minimal presentations
$$\mcA=(\{i\}\cup ([2r]-[r])\,:\, i\in [r])\quad
\text{ and } \quad \mcB=([r]\cup \{i\}\,:\, i\in [2r]-[r])$$ of
$U_{r,2r}$ on $[2r]$ have this property.  We conclude with a sharp
upper bound on $|T_\mcA\cap T_\mcB|$.

\begin{thm}\label{thm:intbd}
  If the presentations $\mcA=(A_i\,:\,i\in [r])$ and
  $\mcB=(B_i\,:\,i\in [r])$ of $M$ differ by more than just reindexing
  the sets, then $|T_\mcA\cap T_\mcB|\leq \frac{3}{4}\cdot 2^r$.  This
  bound is sharp.
\end{thm}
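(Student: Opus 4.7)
The strategy is to combine Theorem \ref{thm:LABunion}, which gives $T_\mcA \cap T_\mcB \cong L_{\mcA,\mcB}$, with Lemma \ref{lem:classifylattices}, which classifies the large sublattices of $2^{[r]}$. Since $L_\mcA$ is a sublattice of $2^{[r]}$ (by Theorem \ref{thm:distriblattice}, join and meet agree with union and intersection), so too is $L_{\mcA,\mcB}$. Suppose, for a contradiction, that $|L_{\mcA,\mcB}| > \frac{3}{4} \cdot 2^r$. Lemma \ref{lem:classifylattices} tells us the largest proper sublattices of $2^{[r]}$ have exactly $\frac{3}{4} \cdot 2^r$ elements (namely $L_1$ and $L'_1$), so we must have $L_{\mcA,\mcB} = 2^{[r]}$. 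Then $L_\mcA = L_\mcB = 2^{[r]}$, and Theorem \ref{thm:charmin} yields that both $\mcA$ and $\mcB$ are minimal.

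The isomorphism $L_{\mcA,\mcB} \to L_{\mcB,\mcA}$ from Theorem \ref{thm:LABunion} (sending $I$ to the $J$ with $M[\mcA^I] = M[\mcB^J]$) is then a lattice automorphism of $2^{[r]}$, hence is induced by a permutation $\tau$ of $[r]$: $M[\mcA^I] = M[\mcB^{\tau(I)}]$ for every $I \subseteq [r]$. I would specialize to the coatoms $I = [r] - \{k\}$. Since $\mcA$ is minimal, $H_k = E(M) - A_k$ is a hyperplane of $M$; because two distinct hyperplanes of $M$ cannot be nested (both have rank $r-1$), two distinct cocircuits are incomparable, so $s_\mcA(H_k) = [r] - \{k\}$ while $r(H_k) = r-1$. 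Lemma \ref{lem:princip} then identifies $M[\mcA^{[r]-\{k\}}]$ with the principal extension $M+_{H_k}x$. The analogous statement for $\mcB$ and $H'_{\tau(k)} = E(M) - B_{\tau(k)}$, combined with $M[\mcA^{[r]-\{k\}}] = M[\mcB^{[r]-\{\tau(k)\}}]$, gives $\cl_M(H_k) = \cl_M(H'_{\tau(k)})$; since both sides are already flats, $H_k = H'_{\tau(k)}$, so $A_k = B_{\tau(k)}$. This holds for every $k$, so $\mcA$ and $\mcB$ agree up to the reindexing $\tau$, contradicting the hypothesis.

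I expect the main subtlety to be justifying $s_\mcA(H_k) = [r] - \{k\}$ via the incomparability of distinct hyperplanes, together with the observation that a lattice automorphism of $2^{[r]}$ must come from a permutation of atoms; everything else is a direct application of machinery already in place. For sharpness, I would take $M$ to be the direct sum of $U_{2,3}$ on $\{a,b,c\}$ with $r-2$ coloops $e_3,\ldots,e_r$, and use presentations $\mcA = (\{a,b\}, \{a,c\}, \{e_3\}, \ldots, \{e_r\})$ and $\mcB = (\{a,b\}, \{b,c\}, \{e_3\}, \ldots, \{e_r\})$; both are minimal, and a direct enumeration via Lemma \ref{lem:princip} shows that $M[\mcA^I] = M[\mcB^I]$ exactly when $I \notin [\{1\}, \overline{\{2\}}]$, so $|T_\mcA \cap T_\mcB| = \frac{3}{4} \cdot 2^r$.
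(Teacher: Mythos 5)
Your proposal is correct and follows essentially the same route as the paper: Theorem \ref{thm:LABunion} plus Lemma \ref{lem:classifylattices} force $L_{\mcA,\mcB}=2^{[r]}$ (hence both presentations minimal by Theorem \ref{thm:charmin}), and your coatom/principal-extension argument is exactly the paper's reconstruction of a minimal presentation from $T_\mcA$ given at the end of Section \ref{sec:T}, while your sharpness example is the paper's $U_{r-2,r-2}\oplus U_{2,3}$ example up to relabeling. The only cosmetic differences are that you argue by a single contradiction instead of splitting off the non-minimal case via Theorem \ref{thm:threequarters}, and you should note (easily) that the cocircuits in a minimal presentation are pairwise distinct so that $s_\mcA(H_k)=[r]-\{k\}$.
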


\begin{proof}
  The inequality follows from Theorems \ref{thm:threequarters} and
  \ref{thm:LABunion} if either $\mcA$ or $\mcB$ is not minimal, so we
  may assume that both are minimal.  As shown in Section \ref{sec:T},
  when $\mcA$ is minimal, we can reconstruct the sets in $\mcA$ from
  $T_\mcA$; thus, by our assumption, $T_\mcA\ne T_\mcB$, so
  $L_{\mcA,\mcB}$ is a proper sublattice of $L_\mcA$.  Thus, we get
  the bound by our work above.

  To see that this bound is tight, let $M$ be $U_{r-2,r-2} \oplus
  U_{2,3}$, with $U_{r-2,r-2}$ and $U_{2,3}$ on the sets
  $\{e_1,e_2,\ldots,e_{r-2}\}$ and $\{e_{r-1},a,b\}$, respectively.
  Consider the presentations $\mcA=(A_i\,:\,i\in [r])$ and
  $\mcB=(B_i\,:\,i\in [r])$ where $A_i = B_i = \{e_i\}$ for $i\in
  [r-2]$ and
  $$A_{r-1}=\{e_{r-1},a\}, \qquad B_{r-1}=\{e_{r-1},b\}, \qquad
  A_r=B_r = \{a,b\}.$$ By Lemma \ref{lem:princip}, if $I\subseteq
  [r-1]$, then both $M[\mcA^I]$ and $M[\mcB^I]$ are the principal
  extension $M+_Y x$ where $Y = \{e_i\,:\,i\in I\}$; also, if
  $\{r-1,r\}\subseteq I\subseteq [r]$, then $M[\mcA^I]$ and
  $M[\mcB^I]$ are both $M+_Y x$ where $Y = \bigl\{e_i\,:\,i\in
  I-\{r\}\bigr\}\cup \{a,b\}$.  There are
  $2^{r-1}+2^{r-2}=\frac{3}{4}\cdot 2^r$ such sets $I$, so the bound
  is optimal.
\end{proof}

\bigskip

\begin{center}
  \textsc{Acknowledgments}
\end{center}

The author thanks Anna de Mier for very useful feedback on the ideas
in this paper, for comments that improved the exposition, for catching
a flaw in the original proof of Theorem \ref{thm:LABunion}, and for
observations that led to Theorem \ref{thm:supportsandclosedsets}.


\begin{thebibliography}{9}  

\bibitem{aigner} M.~Aigner, \emph{Combinatorial Theory},
  (Springer-Verlag, Berlin, New York, 1979).

\bibitem{bw} J.\,A.~Bondy and D.\,J.\,A.~Welsh, Some results on
  transversal matroids and constructions for identically self-dual
  matroids, \emph{Quart.\ J.\ Math.\ Oxford Ser.}\ \textbf{22} (1971)
  435--451.

\bibitem{cyclic} J.~Bonin and A.~de~Mier, The lattice of cyclic flats
  of a matroid, \emph{Ann. Comb.}, \textbf{12} (2008) 155--170.

\bibitem{extpres} J.~Bonin and A.~de Mier, Extensions and
  presentations of transversal matroids, \emph{European J.\ Combin.}\
  \textbf{50} (2015) 18--29.

\bibitem{brualdi} R.\ Brualdi, Transversal matroids, in:
  \emph{Combinatorial geometries, Encyclopedia Math.\ Appl., 29},
  Cambridge Univ.\ Press, Cambridge, 1987, 72–-97.

\bibitem{brudin} R.\ Brualdi and G.\ Dinolt, Characterizations of
  transversal matroids and their presentations, \emph{J.\ Combin.\
    Theory Ser.\ B} \textbf{12} (1972) 268--286.
 
\bibitem{ckt} C.~Chen, K.~Koh, and S.~Tan, Frattini sublattices of
  distributive lattices, \emph{Algebra Universalis} \textbf{3} (1973)
  294--303.

\bibitem{crapo} H.\,H.~Crapo, Single-element extensions of matroids,
  \emph{J.\ Res.\ Natl.\ Bureau Standards Sect.\ B} {\bf 69} (1965)
  55--65.

\bibitem{ef} J.~Edmonds and D.{\ssp}R.~Fulkerson, Transversals and
  matroid partition, \emph{J.\ Res.\ Nat.\ Bur.\ Standards Sect.\ B}
  \textbf{69B} (1965) 147--153.
  
\bibitem{mlv} M.~Las Vergnas, Sur les syst\`emes de repr\'esentants
  distincts d'une famille d'ensembles, \emph{C.\ R.\ Acad.\ Sci.\
    Paris S\'er.\ A-B} \textbf{270} (1970) A501--A503.

\bibitem{JHMThesis} J.~Mason, \emph{Representations of Independence
    Spaces,} (Ph.D.\ Dissertation, University of Wisconsin, Madison
  WI, 1969).

\bibitem{oxley} J.{\ssp}G.~Oxley, \emph{Matroid Theory}, second
  edition (Oxford University Press, Oxford, 2011).

\bibitem{rival} I.~Rival, Maximal sublattices of finite distributive
  lattices, \emph{Proc. Amer. Math. Soc.}\ \textbf{37} (1973)
  417--420.

\bibitem{sharp} H.\ Sharp, Cardinality of finite topologies, \emph{J.\
    Combinatorial Theory} \textbf{5} (1968) 82--86.

\bibitem{stephen} D.\ Stephen, Topology on finite sets, \emph{Amer.\
    Math.\ Monthly} \textbf{75} (1968) 739--741.  

\end{thebibliography}
\end{document}